\newtheorem{thm}{Theorem}[section]
\newtheorem{lem}[thm]{Lemma}
\newtheorem{lem-dfn}[thm]{Lemma-Definition}
\newtheorem{prop}[thm]{Proposition}
\newtheorem{cor}[thm]{Corollary}
\theoremstyle{definition}
\newtheorem{defn}[thm]{Definition}
\newtheorem{exam}[thm]{Example}
\newtheorem{ex}[thm]{Example}
\newtheorem{quest}[thm]{Question}
\newtheorem*{acknowledgement}{Acknowledgement}
\newtheorem{noname}[thm]{}
\theoremstyle{remark}
\newtheorem{rem}[thm]{Remark}
\numberwithin{equation}{section}
\newcommand{\thmref}[1]{Theorem~\ref{#1}}
\newcommand{\lemref}[1]{Lemma~\ref{#1}}
\newcommand{\corref}[1]{Corollary~\ref{#1}}
\newcommand{\proref}[1]{Proposition~\ref{#1}}
\newcommand{\remref}[1]{Remark~\ref{#1}}
\newcommand{\defref}[1]{Definition~\ref{#1}}
\DeclareMathOperator{\Spec}{Spec}
\DeclareMathOperator{\spec}{Spec}
\DeclareMathOperator{\Hom}{Hom}
\DeclareMathOperator{\Soc}{Soc}
\DeclareMathOperator{\chara}{char}
\DeclareMathOperator{\di}{div}
\DeclareMathOperator{\G}{G}
\DeclareMathOperator{\GG}{\Gamma G}
\DeclareMathOperator{\nr}{nr}
\DeclareMathOperator{\br}{\bar r}
\newcommand{\m}{\mathfrak m}
\newcommand{\MM}{\mathfrak M}
\newcommand{\PP}{\mathbb P}
\newcommand{\Z}{\mathbb Z}
\newcommand{\Q}{\mathbb Q}
\newcommand{\bbZ}{\ensuremath{\mathbb Z}}
\newcommand{\cE}{\mathcal E}
\newcommand{\cO}{\mathcal O}
\renewcommand{\:}{\colon}
\newcommand{\ol}[1]{\overline {#1}}
\newcommand{\defset}[2]{{\left\{#1\,\left| \,#2 \right. \right\}}}
\begin{document}
\title{Gorenstein Normal tangent cones of integrally closed ideals in two-dimensional normal singularities
}

\author{Tomohiro Okuma}
\address[Tomohiro Okuma]{Department of Mathematical Sciences, 
Yamagata University,  Yamagata, 990-8560, Japan.}
\email{okuma@sci.kj.yamagata-u.ac.jp}
\author{Kei-ichi Watanabe}
\address[Kei-ichi Watanabe]{Department of Mathematics, College of Humanities and Sciences, 
Nihon University, Setagaya-ku, Tokyo, 156-8550, Japan and 
Organization for the Strategic Coordination of Research and Intellectual Properties, Meiji University
}
\email{watnbkei@gmail.com}
\author{Ken-ichi Yoshida}
\address[Ken-ichi Yoshida]{Department of Mathematics, 
College of Humanities and Sciences, 
Nihon University, Setagaya-ku, Tokyo, 156-8550, Japan}
\email{yoshida.kennichi@nihon-u.ac.jp}

\thanks{The first-named author was partially supported by JSPS Grant-in-Aid for Scientific Research (C) Grant Number 21K03215. 
The second-named author was partially supported by JSPS Grant-in-Aid for Scientific Research (C) Grant Number 23K03040. 
The third named author was partially supported by JSPS Grant-in-Aid for Scientific Research (C) Grant Number 24K06678.}
\keywords{Normal tangent cone, Gorenstein ring, elliptic ideal,  normal reduction number, 
Brieskorn hypersurface}
\subjclass[2020]{Primary: 13A30, 14J17; Secondary:13H10, 14J17}

\pagestyle{plain}

\begin{abstract} 
Let $(A,\m)$ be a two-dimensional excellent normal Gorenstein 
local domain containing an algebraically closed filed. 
Let $I =H^0(X,\mathcal{O}_X(-Z)) \subset A$ be 
an $\m$-primary integrally closed ideal represented by an anti-nef cycle $Z$ 
on some resolution $X\to \Spec A$. 
In this paper, we prove that $\overline{G}(I)$ is Gorenstein if and only if 
it is Cohen-Macaulay and $(r-1)Z^2+K_XZ=0$, where 
$r=\br(I)$ denotes the normal reduction number of $I$ and 
$K_X$ denotes the canonical divisor on $X$. 
\end{abstract}

\maketitle

\section*{Introduction}

Throughout this paper, let $(A,\m)$ be a Noetherian commutative 
local ring with the unique maximal ideal $\m$ and the residue field 
$k=A/\m$. 
Furthermore, we assume that $A$ is a two-dimensional 
excellent normal local domain (which is not regular) 
containing an algebraically closed field $k$ unless otherwise specified. 
 Then for any $\m$-primary integrally closed ideal $I \subset A$, there exists a resolution $X\to \spec A$ of singularities of $\Spec A$, and an anti-nef cycle $Z$ on $X$ such that $I$ can be
represented  by $Z$ as follows:  
$I\mathcal{O}_X=\mathcal{O}_X(-Z)$ and $I=H^0(X,\mathcal{O}_X(-Z))$. 
Then we denote it by $I=I_Z$.

\par \vspace{3mm}
In \cite{OWY4}, the authors introduced the following two 
normal reduction numbers. 
For any $\m$-primary integrally closed ideal $I \subset A$ and 
its minimal reduction $Q$, we put  
\begin{eqnarray*}
\nr(I) &=& \min\{r \in \bbZ_{\ge 1} \,|\,  \overline{I^{r+1}}=Q \overline{I^r}\}, \\
\br(I)&=&\min\{r \in \bbZ_{\ge 1} \,|\, \overline{I^{n+1}}=Q \overline{I^n} 
\; \text{for all $n \ge r$}\}. 
\end{eqnarray*}
Then $I$ is a $p_g$-ideal if and only if $\br(I)=1$.

\par \vspace{3mm}
For an ideal $I \subset A$, we put 
\[
G(I)=\bigoplus_{n=0}^{\infty} I^n/I^{n+1}, \qquad 
\overline{G}(I)=\bigoplus_{n=0}^{\infty} \overline{I^n}/\overline{I^{n+1}}. 
\] 
Then $G(I)$ (resp. $\overline{G}(I)$) 
is called the associated graded ring (resp. 
the normal tangent cone) of $I$. 
Many authors have studied the Cohen-Macaulayness and 
the Gorensteinness of $G(I)$. 
However, several ring-theoretic properties of $\overline{G}(I)$ 
seems to be not known in spite of its importance from geometric point of view. 
So in \cite{OWY6} we give a criterion for $\overline{G}(I)$ to be Gorenstein 
in the case of elliptic ideals $I$. 
So it is natural to ask the following question. 

\par \vspace{2mm} \par \noindent 
{\bf Question.}
Suppose that  $\overline{G}(I)$ is Cohen-Macaulay. 
When is it Gorenstein? 

\par  \vspace{2mm} 
 The following theorem gives an answer to the question above. 
In this theorem,  our main contribution is an equivalence between (1) 
and (4). 
This gives a geometric characterization of Gorensteinness of $\overline{G}(I_Z)$. 
Indeed, an equivalence of $(1)$, $(2)$ and (3) follows from 
\cite[Theorem 4.4]{HKU}. 
Moreover, Heinzer et. al \cite[Theorem 3.12]{HKU} 
proved that (2) implies the Cohen-Macaulayness of 
$\overline{G}(I)$. 

\par \vspace{3mm} \par \noindent 
{\bf Theorem (see Theorem $\ref{Main}$ and Corollary \ref{VerNTC}).}
Let $(A,\m)$ be a two-dimensional excellent normal Gorenstein local 
domain containing a field $k=\overline{k}$. 
Let $I=I_Z$ with $r=\br(I) \ge 2$ and $Q$ its minimal reduction of $I$. 
 We put $L_n=Q+\overline{I^n}$ for every integer $n \ge 1$. 
Assume that $\overline{G}(I)$ is Cohen-Macaulay. 
Then the following conditions are equivalent$:$
\begin{enumerate}
\item $\overline{G}(I)$ is Gorenstein. 
\item 
$Q \colon L_n=L_{r+1-n}$ for every $n=1,2,\ldots, r$. 
\item
$\ell_A(A/L_n)+\ell_A(A/L_{r+1-n})=e_0(I)$ for  every $n=1,2,\ldots,\lceil \frac{r}{2} \rceil$. 
\item $(r-1)Z^2+K_XZ=0$. 
\end{enumerate}
When this is the case, 
\begin{enumerate}
\item[$(i)$] $\ell_A(A/I) = \ell_A(\overline{I^{r}}/Q\overline{I^{r-1}}) 
\le p_g(A)+2-r$.   
\item[$(ii)$] If we put $I_{r-1}=\overline{I^{r-1}}$, then it is an elliptic ideal and $\overline{G}(I_{r-1})$ is Gorenstein.  
\end{enumerate}

\par \vspace{3mm}
In Section 3, we investigate the Gorensteinness for $\overline{G}(\m)$  
of the maximal ideal $\m$ in a Brieskorn hypersurface $A$ of type $(a,b,c)$:
\[
A=K[x,y,z]_{(x,y,z)}/(x^a+y^b+z^c), 
\]
where $2 \le a \le b \le c$ and $\chara K=0$ or $\chara K=p$ does not divide $abc$.
Indeed, we prove 
\par \vspace{3mm} \par \noindent 
{\bf Theorem (see Theorem $\ref{BmaxThm}$)} 
Let $A$ be a Brieskorn hypersurface of type $(a,b,c)$ and put $r=n_{a-1}:=\lfloor \frac{(a-1)b}{a} \rfloor$, $d=\gcd(a,b)$. 
Then the following conditions are equivalent$:$
\begin{enumerate}
\item $\overline{G}(\m)$ is Gorenstein. 
\item $\ell_A(A/L_n)+\ell_A(A/L_{r+1-n})=a$ for every $n=1,2,\ldots, r$.  
\item $(r-1)Z^2+K_XZ=0$.  
\item $b \equiv 0$ or $d \pmod{a}$.
\end{enumerate}

\par \vspace{3mm}
In general, there exist infinitely many $p_g$-ideals $I$ for which $\overline{G}(I)$ is Gorenstein 
(i.e., good $p_g$-ideals)  in a given local ring $A$  (see \remref{r:pginfty}). 
In Section 4,  we introduce the weighted dual graph $\Gamma(I)$ of $I$ and prove that the combinatorial type, i.e., the weighted dual graph, of $I$ 
such that $\br(I)\ge 2$ and $\overline{G}(I)$ is Gorenstein is finite (\thmref{t:GG}). 
Moreover, for homogeneous hypersurface $A$ of degree $d \le 5$, we can classify all ideals 
$I=I_Z \subset A$ such that $\overline{G}(I)$ is Gorenstein (Theorem \ref{Main: d-4,5}).

\section{Preliminaries}

\par 
Let $A$ be a commutative Noetherian ring, and 
let $I \subset A$ be an ideal. 
An element $a \in A$ is said to be integral over $I$ if there exists a 
monic polynomial 
\[
f(X)= X^n +c_1X^{n-1}+\cdots + c_n \;\; (c_i \in I^i,\, i=1,2,\ldots,n)
\]
such that $f(a)=0$. 
The ideal of all elements that are integral over $I$ is called the 
integral closure of $I$, denoted by $\overline{I}$. 
If $\overline{I}=I$, then $I$ is said to be {\it integrally closed}. 
An ideal $I$ is called {\it normal} if all powers of $I$ are integrally closed.  
\par 
In what follows, let $(A,\m)$ be an excellent  two-dimensional 
normal local domain containing an algebraically closed field 
$k= A/\m$, where $\m$ denotes the unique maximal ideal of $A$,
 and $I \subset A$ an $\m$-primary integrally closed ideal. 
A divisor on a resolution space whose support is contained 
in the exceptional set is called a cycle. 
Then in our situation, there exist a resolution of singularities 
$f \colon X \to \Spec A$ and an anti-nef cycle $Z$ on $X$ such that 
$I\mathcal{O}_X=\mathcal{O}_X(-Z)$ and $I=H^0(X,\mathcal{O}_X(-Z))$. 
Then we say that $I$ is \textit{represented} by $Z$ on $X$, 
and denote it by $I=I_Z$. Note that $\overline{I^k}=I_{kZ}$. 

\par \vspace{2mm}
\subsection{Riemann-Roch formula and normal reduction numbers}

\begin{defn}[\textrm{cf. \cite[Definition 2.1]{OWY4}}] \label{qkI}
For $I=I_Z$, we define 
\[
q(I) = \dim_k H^1(X,\mathcal{O}_X(-Z)). 
\]
Then $q(kI):=q(\overline{I^k})$ is a non-negative integer for every  $k \ge 0$, and 
there exists a decreasing sequence 
\[
p_g(A):=q(0 I) \ge q(I) \ge q(2I) \ge q(3I) \ge \cdots \ge 0.  
\] 
We put $q(\infty I):=q(kI)$ for sufficiently large $k$. 
\par 
Moreover, $p_g(A)=\dim_k H^1(X,\mathcal{O}_X)$ 
is called the {\it geometric genus} of $A$. 
Note that $q(kI)$'s are independent of the choice of the resolution.  
\end{defn}

\par
The following formula was proved in \cite{kato} in the complex case, but it holds in any characteristic \cite{WY}. 

\begin{thm}[\textbf{Kato's Riemann Roch formula}] \label{RRformula}
Assume $I=I_Z$. Let $K_X$ denote the canonical divisor on $X$. 
Then we have 
\[
\ell_A(A/I)+q(I)=\chi(Z)+p_g(A), \;\text{where}\;\; \chi(Z)=-\dfrac{Z^2+K_XZ}{2}
\]
\end{thm}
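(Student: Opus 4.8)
The plan is to compute both sides cohomologically and reduce everything to the Euler characteristic $\chi(\mathcal{O}_Z)$ of the structure sheaf of the cycle $Z$. Since $A$ is normal and $f\colon X\to\Spec A$ is a resolution, $f_*\mathcal{O}_X=\mathcal{O}_{\Spec A}$, so $H^0(X,\mathcal{O}_X)=A$ and $H^0(X,\mathcal{O}_X(-Z))=I$. Because $\Spec A$ is affine, $H^i(X,\mathcal{F})=\Gamma(\Spec A,R^if_*\mathcal{F})$; as the fibers of $f$ are at most one-dimensional, $R^2f_*=0$, and for $i=1$ the sheaves $R^1f_*\mathcal{F}$ are supported at $\m$, so $H^1(X,\mathcal{O}_X(-Z))$ and $H^1(X,\mathcal{O}_X)$ are finite-dimensional $k$-vector spaces of dimensions $q(I)$ and $p_g(A)$.

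First I would apply cohomology to the structure sequence $0\to\mathcal{O}_X(-Z)\to\mathcal{O}_X\to\mathcal{O}_Z\to 0$. Using $H^2(X,\mathcal{O}_X(-Z))=0$ and identifying the image of $A\to H^0(\mathcal{O}_Z)$ with $A/I$, this yields the exact sequence of finite-dimensional $k$-spaces $0\to A/I\to H^0(\mathcal{O}_Z)\to H^1(\mathcal{O}_X(-Z))\to H^1(\mathcal{O}_X)\to H^1(\mathcal{O}_Z)\to 0$. (Here $\mathcal{O}_Z$ is supported on the projective exceptional set, so its cohomology is finite-dimensional.) Taking the alternating sum of dimensions gives $\ell_A(A/I)+q(I)=\chi(\mathcal{O}_Z)+p_g(A)$, where $\chi(\mathcal{O}_Z)=\dim_k H^0(\mathcal{O}_Z)-\dim_k H^1(\mathcal{O}_Z)$. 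Thus the theorem is equivalent to the purely numerical identity $\chi(\mathcal{O}_Z)=-(Z^2+K_XZ)/2$.

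To establish this identity I would induct on the total multiplicity $\sum m_i$ where $Z=\sum m_iE_i$. Writing $Z=Z'+E$ with $E$ one of the components, the standard sequence $0\to\mathcal{O}_E(-Z')\to\mathcal{O}_Z\to\mathcal{O}_{Z'}\to 0$, in which $\mathcal{O}_E(-Z')=\mathcal{O}_X(-Z')\otimes\mathcal{O}_E$ is a line bundle of degree $-Z'\!\cdot\!E$ on the integral projective curve $E$, gives $\chi(\mathcal{O}_Z)=\chi(\mathcal{O}_{Z'})+\chi(\mathcal{O}_E(-Z'))$. Riemann--Roch on the curve $E$ gives $\chi(\mathcal{O}_E(-Z'))=-Z'\!\cdot\!E+(1-p_a(E))$, and the adjunction formula $2p_a(E)-2=E^2+K_XE$ turns this into $-Z'\!\cdot\!E-(E^2+K_XE)/2$. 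Combining with the inductive hypothesis $\chi(\mathcal{O}_{Z'})=-(Z'^2+K_XZ')/2$ and expanding $(Z'+E)^2+K_X(Z'+E)$ matches the claimed value; the base case is a single component, again handled by adjunction.

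The computation is essentially bookkeeping once the inputs are in place, so the points requiring care are the foundational facts rather than a single hard step: the vanishing $R^2f_*\mathcal{O}_X(-Z)=0$ and the finiteness of $H^1$, both needed to make every Euler characteristic well defined, and the fact that Riemann--Roch on curves and the adjunction formula hold over an arbitrary algebraically closed field, which is exactly what makes the argument characteristic-free as asserted. One should also note that the $E_i$ may be singular, but arithmetic genus, adjunction, and curve Riemann--Roch are all available for integral (possibly singular) projective curves, so no smoothness of the components is needed.
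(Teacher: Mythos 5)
Your proof is correct. Note that the paper contains no proof of Theorem \ref{RRformula} to compare against: it is quoted as known, with the complex-analytic case attributed to \cite{kato} and the validity in arbitrary characteristic to \cite{WY}. Your two-step argument --- the long exact sequence of $0\to\mathcal{O}_X(-Z)\to\mathcal{O}_X\to\mathcal{O}_Z\to 0$, which reduces the formula to the numerical identity $\chi(\mathcal{O}_Z)=-\frac{1}{2}(Z^2+K_XZ)$, followed by induction on the total multiplicity of $Z$ via the decomposition sequence $0\to\mathcal{O}_E(-Z')\to\mathcal{O}_Z\to\mathcal{O}_{Z'}\to 0$, curve Riemann--Roch, and adjunction --- is precisely the standard proof underlying those citations, and the foundational points you single out all hold in this setting: $R^2f_*=0$ because the fibers are at most one-dimensional, $R^1f_*$ of a coherent sheaf is supported at the closed point and hence of finite length, $\ell_A(A/I)=\dim_k(A/I)$ because $k=A/\m$ embeds in $A$, and anti-nefness of $Z$ is never needed, so the identity holds for every effective cycle. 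The only step you invoke without justification is adjunction for a compact, possibly singular, integral curve $E$ inside the \emph{non-projective} surface $X$; since the usual derivation via surface Riemann--Roch is unavailable here, one should instead note that $E$ is a Gorenstein curve with dualizing sheaf $\omega_E\cong(\omega_X\otimes\mathcal{O}_X(E))|_E$, whence $2p_a(E)-2=\deg\omega_E=(K_X+E)E$ by Riemann--Roch and Serre duality on $E$ --- a one-line repair that is likewise characteristic-free.
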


\par \vspace{2mm}
For any ideal $I$, let $Q$ be its minimal reduction. 
Then we put 
\[
r_Q(I)=\min\{r \in \Z_{\ge 1}\,|\, I^{r+1}=QI^r\}. 
\]
In general, $r_Q(I)$ depends on the choice of $Q$. So
we define 
\[
r(I)=\min\{r_Q(I) \,|\, \text{$Q$ is a minimal reduction of $I$}\}. 
\]
Then $r(I)$ is called a \textit{reduction number} of $I$.  
We now recall the definition of two normal reduction numbers. 
By virtue of \cite[Lemma 4.1]{Hun}, $\nr(I)$ and $\br(I)$ are independent on the choice of $Q$. 

\begin{defn}[\textrm{cf. \cite[Definition 2.5]{OWY4}}] 
\label{NorRedNum}
For $I=I_Z$ and its minimal reduction $Q$, 
we put
\begin{eqnarray*}
\nr(I)&=& \min\{r \in \mathbb{Z}_{\ge 1}  \,| \,  \overline{I^{r+1}}=Q\overline{I^r} \}, \\[1mm]
\br(I) &=& \min\{r \in \mathbb{Z}_{\ge 1}  \,| \,  \overline{I^{n+1}}=Q\overline{I^n} \; \text{for all $n \ge r$}\}. 
\end{eqnarray*}
\par 
A positive integer $\br(I)$ (resp. $\nr(I)$) is called 
the {\it normal reduction number} 
(resp. the {\it relative normal reduction number}) 
\end{defn}

\par \vspace{2mm}
The following relationship is very important in our theory. 
\begin{prop}[\textrm{cf. \cite[Proposition 2.2]{OWY5}}] \label{q(nI)formula}
For $I=I_Z$, we put $q_{n}=q(nI)$. Then 
\begin{enumerate}
\item $\ell_A(\overline{I^{n+1}}/Q\overline{I^n})
=(q_{n-1}-q_n)-(q_n-q_{n+1})$. 
\item $\nr(I)= \min\{n \in \bbZ_{\ge 1} \,| \, 
q_{n-1}-q_n = q_n-q_{n+1} \}$. 
\item $\br(I)=\min\{n \in \bbZ_{\ge 1} \,|\, q_{n-1}=q_n  \}$. 
\end{enumerate}
\end{prop}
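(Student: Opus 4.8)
The plan is to observe first that statements $(2)$ and $(3)$ are purely formal consequences of $(1)$, so that all the real work lies in $(1)$. Granting $(1)$, put $d_n=q_{n-1}-q_n$; since $\ell_A(\overline{I^{n+1}}/Q\overline{I^n})=d_n-d_{n+1}\ge 0$, the sequence $\{d_n\}$ is non-increasing, and by \defref{qkI} the $q_n$ stabilize, so $d_n=0$ for $n\gg 0$. Now $\overline{I^{n+1}}=Q\overline{I^n}$ is equivalent to $d_n=d_{n+1}$, i.e. to $q_{n-1}-q_n=q_n-q_{n+1}$, which immediately yields $(2)$; and the condition "$d_m=d_{m+1}$ for all $m\ge n$'' forces the eventually-zero sequence $\{d_m\}_{m\ge n}$ to vanish, i.e. $d_n=0$, which is $q_{n-1}=q_n$, yielding $(3)$.

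For $(1)$ I set $\lambda_n=\ell_A(A/\overline{I^n})$ and invoke Kato's formula (\thmref{RRformula}), which gives $\lambda_n=\chi(nZ)+p_g(A)-q_n$ with $\chi(nZ)=-\tfrac12(n^2Z^2+nK_XZ)$. Since $\chi(nZ)$ is quadratic in $n$ with second difference $-Z^2$, I get $\lambda_{n+1}-2\lambda_n+\lambda_{n-1}=-Z^2-(q_{n-1}-2q_n+q_{n+1})$, and recall $e_0(I)=-Z^2$. Because $Q\overline{I^n}\subseteq\overline{I^{n+1}}\subseteq\overline{I^n}$, one has $\ell_A(\overline{I^{n+1}}/Q\overline{I^n})=\ell_A(\overline{I^n}/Q\overline{I^n})-(\lambda_{n+1}-\lambda_n)$. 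Thus $(1)$ reduces to the single identity
\[
\ell_A(\overline{I^n}/Q\overline{I^n})=e_0(I)+\ell_A(\overline{I^{n-1}}/\overline{I^n}),
\]
after which the two occurrences of $-Z^2$ cancel and the asserted value $q_{n-1}-2q_n+q_{n+1}$ drops out.

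To prove this identity I take a general minimal reduction $Q=(a,b)$, which exists as $k=\overline{k}$; since $A$ is a two-dimensional normal, hence Cohen--Macaulay, domain, $a,b$ form a regular sequence. Viewing $\overline{I^n}$ as a torsion-free module, its top Koszul homology $H_2(Q;\overline{I^n})$ vanishes, while $H_1(Q;\overline{I^n})\cong(\overline{I^n}:_AQ)/\overline{I^n}$; Serre's theorem identifies the alternating sum $\sum_i(-1)^i\ell_A(H_i(Q;\overline{I^n}))$ with the multiplicity $e(Q;\overline{I^n})=e_0(I)$, so $\ell_A(\overline{I^n}/Q\overline{I^n})=e_0(I)+\ell_A((\overline{I^n}:Q)/\overline{I^n})$. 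Hence everything reduces to the colon equality $\overline{I^n}:_AQ=\overline{I^{n-1}}$. The inclusion $\supseteq$ is clear from $Q\subseteq I$. For $\subseteq$ I argue on the resolution: for general $a$ one has $\mathrm{div}_X(a)=Z+C_a$ with $\mathrm{ord}_{E_i}(a)=z_i$ for every exceptional prime $E_i$ (where $Z=\sum z_iE_i$), since $\mathcal O_X(-Z)=I\mathcal O_X$ is generated by global sections and so carries no fixed exceptional component beyond $Z$. If $w\in A$ satisfies $aw\in\overline{I^n}=H^0(X,\mathcal O_X(-nZ))$, then $\mathrm{ord}_{E_i}(w)\ge nz_i-z_i=(n-1)z_i$ for all $i$, while $w\in A$ is regular and hence $\mathrm{div}_X(w)\ge 0$ off the exceptional locus; together these give $\mathrm{div}_X(w)\ge(n-1)Z$, i.e. $w\in\overline{I^{n-1}}$. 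Thus $\overline{I^n}:a=\overline{I^{n-1}}$, and a fortiori $\overline{I^n}:Q=\overline{I^{n-1}}$.

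The main obstacle is exactly this colon equality: it is where the geometry enters, and it hinges on choosing $a$ generic enough to attain the precise vanishing orders $z_i$ along every exceptional curve, so that dividing by $a$ shifts the cycle bound by exactly one copy of $Z$. Once this is secured, the remainder is the bookkeeping of the previous paragraphs — feeding the colon equality into the Koszul/Serre length formula and matching it against the second-difference identity coming from Kato's formula, during which the $-Z^2$ terms cancel to leave $(1)$ — and then $(2)$ and $(3)$ follow as indicated.
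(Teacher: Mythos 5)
Your proof is correct, but it takes a genuinely different route from the paper's. The paper does not reprove this proposition; it is quoted from \cite{OWY5}, where the argument is sheaf-theoretic and more direct: two general generators $a,b$ of $Q$ generate $\cO_X(-Z)$, hence induce a Koszul-type short exact sequence $0 \to \cO_X(-(n-1)Z) \to \cO_X(-nZ)^{\oplus 2} \to \cO_X(-(n+1)Z) \to 0$, and the cohomology long exact sequence, together with $H^0(X,\cO_X(-mZ))=\overline{I^m}$, yields $\ell_A(\overline{I^{n+1}}/Q\overline{I^n}) = q_{n-1}-2q_n+q_{n+1}$ in one stroke, with the exactness at the $H^0$-level (in particular your colon identity) delivered for free; $(2)$ and $(3)$ then follow by exactly the formal monotonicity argument you give, which matches the original. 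You trade that single exact sequence for three pieces: second differences of Kato's Riemann--Roch (\thmref{RRformula}), Serre's Koszul-characteristic formula $e(Q;M)=\ell_A(M/QM)-\ell_A(H_1)+\ell_A(H_2)$ with $H_2=0$ by torsion-freeness, and the colon identity $\overline{I^n}:_A Q=\overline{I^{n-1}}$ proved by order computations along the $E_i$ — so the same geometric input (global generation of $\cO_X(-Z)$ by the reduction) enters, just at the last step, and your route isolates the colon identity as a statement of independent interest (compare Lemma \ref{L-Ineq}(1)). One refinement you should make: as written, you establish the colon identity, hence $(1)$, only for a \emph{general} minimal reduction, via a single general $a$ with $\ord_{E_i}(a)=z_i$ for all $i$; but the proposition is applied in \thmref{Main} with an arbitrary minimal reduction $Q$. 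This is easily repaired: for \emph{any} reduction $Q=(a,b)$ of $I$ one has $Q\cO_X=\cO_X(-Z)$, so $\min\left(\ord_{E_i}(a),\ord_{E_i}(b)\right)=z_i$ for every $i$, and running your order argument on whichever generator attains the minimum along each $E_i$ gives $\overline{I^n}:_A Q=\overline{I^{n-1}}$ for every minimal reduction; alternatively, for $(2)$ and $(3)$ alone, independence of $Q$ is already covered by Huneke's lemma cited before Definition \ref{NorRedNum}.
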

\par \vspace{2mm}
Put $I_k = \overline{I^k}$. Since $q (n I_k) = q((kn) I)$, we can see the following, 
which will be used in \ref{VerNTC}.

\begin{cor}\label{br(Ik)} 
$\br(I_k) = \left\lceil \dfrac{r-1}{k} \right\rceil +1$.  
\end{cor}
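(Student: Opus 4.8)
The plan is to reduce everything to the behaviour of the single sequence $q_n=q(nI)$ and to use the characterization of $\br$ in terms of it. First I would recall from \proref{q(nI)formula}(3) that for any $\m$-primary integrally closed ideal $J$ one has $\br(J)=\min\{n\ge 1 : q((n-1)J)=q(nJ)\}$. The substitution to exploit is exactly the one noted just above the statement: since $q(nI_k)=q((kn)I)=q_{kn}$, the normal reduction number of $I_k$ becomes $\br(I_k)=\min\{n\ge 1 : q_{k(n-1)}=q_{kn}\}$.

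Next I would pin down precisely where the sequence $(q_n)$ becomes constant. By \proref{q(nI)formula}(1) the lengths $\ell_A(\overline{I^{n+1}}/Q\overline{I^n})=(q_{n-1}-q_n)-(q_n-q_{n+1})$ are non-negative, so the first differences $d_n:=q_{n-1}-q_n$ form a non-increasing sequence of non-negative integers. Hence once some $d_n$ vanishes it vanishes for all larger indices; combined with $r=\br(I)=\min\{n : d_n=0\}$ this yields the strictly decreasing-then-constant chain $q_0>q_1>\cdots>q_{r-1}=q_r=q_{r+1}=\cdots$. In particular, for $a<b$ the equality $q_a=q_b$ holds if and only if $a\ge r-1$ (both then equal the stable value $q(\infty I)$; otherwise $q_a>q_{a+1}\ge q_b$).

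Applying this with the indices $a=k(n-1)<kn=b$ (the inequality being automatic since $k\ge 1$), the equality $q_{k(n-1)}=q_{kn}$ holds precisely when $k(n-1)\ge r-1$. Therefore $\br(I_k)=\min\{n\ge 1 : k(n-1)\ge r-1\}$, and solving $n-1\ge (r-1)/k$ over the integers gives the minimal value $n=\lceil (r-1)/k\rceil+1$, which is the asserted formula. A sanity check at $k=1$ returns $\br(I)=r$, as it must.

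I do not expect a serious obstacle here: the only genuinely substantive point is the convexity observation that the first differences $d_n$ are non-increasing, so that $(q_n)$ stabilizes monotonically rather than fluctuating, and this is immediate from the non-negativity of the lengths in \proref{q(nI)formula}(1); everything else is an elementary ceiling-function manipulation. If one also wishes to permit $r=1$ (the $p_g$-case), the same formula correctly returns $\br(I_k)=1$, consistent with powers of $p_g$-ideals again being $p_g$-ideals.
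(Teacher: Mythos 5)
Your proof is correct and follows essentially the same route the paper intends: the substitution $q(nI_k)=q((kn)I)$ together with the characterization $\br(J)=\min\{n\ge 1: q((n-1)J)=q(nJ)\}$ from Proposition~\ref{q(nI)formula}(3), which is exactly the remark the paper places just before the corollary. Your explicit verification that the differences $q_{n-1}-q_n$ are non-increasing (so that $(q_n)$ is strictly decreasing up to index $r-1$ and constant thereafter) is the detail the paper leaves implicit, and you have filled it in correctly.
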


\par \vspace{3mm}
\subsection{Normal tangent cones}
Let us recall the definition of normal tangent cones, which is the main 
target in this paper. 

\begin{defn}[\textbf{Normal Tangent Cone}] \label{def:cones}
For any $\m$-primary integrally closed ideal $I \subset A$,  
\[
\overline{G}(I) = \bigoplus_{n \ge 0} \overline{I^n}/\overline{I^{n+1}}
\]
is called the {\it normal tangent cone} of $I$. 
\end{defn} 

\par \vspace{3mm}
For an ideal $I$ of $A$,    
$G(I) = \oplus_{n \ge 0} I^n/I^{n+1}$ 
is called the {\it associated graded ring} or the {\it tangent cone} of $I$. 
We notice that $\overline{G}(I)$ is \textit{not} an integral closure of $G(I)$ even if $A$ is a normal domain and $I$ is integrally closed.  
However, if we consider the extended Rees algebra 
\[
\mathcal{R}':=\mathcal{R}'(I)=  A[It, t^{-1}] = \bigoplus_{n \in \mathbb{Z}} I^n t^n \subset A[t,t^{-1}] 
\]
and its integral closure $\overline{\mathcal{R}'}$ of $\mathcal{R}'$ 
in its fraction field, then 
\[
G(I) \cong \mathcal{R}'/t^{-1}\mathcal{R}', \qquad 
\overline{G}(I) \cong \overline{\mathcal{R}'}/t^{-1} \overline{\mathcal{R}'}. 
\]

\par \vspace{3mm}
In Section 2, we give an answer to this question in our context. 

\begin{quest}
Put $I=I_Z \subset A$. When is $\overline{G}(I)$ Gorenstein?
\end{quest}

\par  \vspace{3mm}
Valabrega-Valla type theorem yields a criterion for $\overline{G}(I)$ 
to be Cohen-Macaulay. 

\begin{thm}[\textrm{cf. \cite{HuMa, VV}}] \label{Ideal-GCM}
Let $A$, $I=I_Z$ be as above. 
Then 
\begin{enumerate}
\item $\overline{G}(I)$ is Cohen-Macaulay if and only if $Q \cap \overline{I^n} =Q\overline{I^{n-1}}$ holds for some minimal reduction $Q$ of $I$ and for all $n \ge 2$. 
\item For any minimal reduction $Q$ of $I$, we have $Q \cap \overline{I^2}=QI$ (Huneke-Itoh). 
\item If $\overline{G}(I)$ is Cohen-Macaulay, then $\nr(I)=\br(I)$. 
\end{enumerate}
\end{thm}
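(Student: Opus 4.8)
The plan is to read everything off the Hilbert function of $R=\overline{G}(I)$ and to treat $(1)\Leftrightarrow(2)\Leftrightarrow(3)$ as input from \cite[Theorem 4.4]{HKU} (here $(2)\Leftrightarrow(3)$ is Gorenstein duality in the Artinian Gorenstein ring $B=A/Q$, using $L_{r+1-n}L_n\subseteq Q+\overline{I^{r+1}}=Q$), so that the only new content is the geometric equivalence with $(4)$. First I would reformulate $(3)$. Writing $\delta_n=\ell_A(\overline{I^n}/Q\,\overline{I^{n-1}})$ and using the Cohen--Macaulay hypothesis in the form $Q\cap\overline{I^n}=Q\,\overline{I^{n-1}}$ from \thmref{Ideal-GCM}, one gets $\ell_A(A/L_n)=e_0(I)-\delta_n$ for all $n\ge1$ (with $L_1=I$, $\delta_1=\ell_A(I/Q)$), so that $(3)$ says exactly
\[
\delta_n+\delta_{r+1-n}=e_0(I)\qquad(1\le n\le r),
\]
i.e. that the $h$-vector of $\overline{G}(I)$ is symmetric.

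Next I would make that $h$-vector explicit. Putting $q_n=q(nI)$ and combining \thmref{RRformula} with \proref{q(nI)formula}, the Hilbert function $c_n=\dim_k[\overline{G}(I)]_n=\ell_A(\overline{I^n}/\overline{I^{n+1}})$ is
\[
c_n=-\frac{(2n+1)Z^2+K_XZ}{2}+(q_n-q_{n+1}),
\]
so the $h$-polynomial is obtained by taking second differences. Since $q_n-q_{n+1}=0$ for $n\ge r-1$ and, by Cohen--Macaulayness, $\nr(I)=\br(I)=r$ forces the differences $q_{n-1}-q_n$ to be strictly decreasing for $1\le n\le r-1$, one finds $\deg h=r$ (equivalently $a(\overline{G}(I))=r-2$) and, by a short telescoping computation, the two moments
\[
\sum_i h_i=e(\overline{G}(I))=e_0(I)=-Z^2,\qquad \sum_i i\,h_i=\chi(Z)+K_XZ=\frac{-Z^2+K_XZ}{2}.
\]
Symmetry $h_i=h_{r-i}$ forces the centre-of-mass identity $\sum_i i\,h_i=\tfrac r2\sum_i h_i$, which reads $-Z^2+K_XZ=-rZ^2$, i.e. $(r-1)Z^2+K_XZ=0$. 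This gives $(1)\Rightarrow(4)$ cleanly, and more precisely shows that $(4)$ is \emph{equivalent} to this single centre-of-mass identity.

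The hard direction is $(4)\Rightarrow(1)$, and I expect it to be the main obstacle, because for $r\ge3$ the centre-of-mass identity is a priori strictly weaker than the full symmetry $h_i=h_{r-i}$ demanded by Gorensteinness. The extra input must come from the Gorensteinness of $A$ itself. The plan is to use graded local duality for $R=\overline{G}(I)$: identifying $\dim_k[\omega_R]_{-n}$ with $\dim_k[H^2_{\MM}(R)]_n$ and expressing the latter through the sheaf cohomology $H^1(X,\cO_X(-nZ))$, the Gorenstein hypothesis supplies the Serre/relative duality $h^1(\cO_X(-nZ))=h^1(\cO_X(K_X+nZ))$ together with $\cO_X(K_X)\cong\cO_X(-Z_K)$ for the canonical cycle $Z_K$. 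Feeding this back through \thmref{RRformula} yields a reflection between the Hilbert functions of $R$ and of $\omega_R$ whose shift is controlled by $K_XZ$ and $r$; condition $(4)$ is exactly what calibrates this shift to the $a$-invariant $r-2$, upgrading the duality to an isomorphism $\omega_R\cong R(r-2)$, hence to Gorensteinness. The technical heart is therefore the bookkeeping that pins down $[H^2_{\MM}(R)]_n$ and tracks $Z_K$ through the Riemann--Roch formula.

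Finally, the two supplementary assertions should follow formally. Statement $(i)$ is the case $n=1$ of the reformulated $(3)$, namely $\ell_A(A/I)=\delta_r=\ell_A(\overline{I^{r}}/Q\,\overline{I^{r-1}})$; the bound $\ell_A(A/I)\le p_g(A)+2-r$ then comes from $\delta_r=q_{r-2}-q_{r-1}$ and the strict chain $q_0-q_1>\cdots>q_{r-2}-q_{r-1}>0$ together with $q_0-q_1=p_g(A)-q_1\le p_g(A)$. For $(ii)$ I would apply the main equivalence to $I_{r-1}=\overline{I^{r-1}}$: by \corref{br(Ik)} one has $\br(I_{r-1})=2$, $\overline{G}(I_{r-1})$ is Cohen--Macaulay by \corref{VerNTC}, and condition $(4)$ is scale-invariant in the needed way, since
\[
(2-1)\big((r-1)Z\big)^2+K_X\big((r-1)Z\big)=(r-1)\big((r-1)Z^2+K_XZ\big)=0,
\]
so that $\overline{G}(I_{r-1})$ is Gorenstein, while $\br(I_{r-1})=2$ makes $I_{r-1}$ elliptic.
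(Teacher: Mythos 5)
There is a genuine gap, and it is structural: your proposal does not prove the statement in question at all. The statement is \thmref{Ideal-GCM}, the Valabrega--Valla type criterion: (1) $\overline{G}(I)$ is Cohen--Macaulay iff $Q\cap\overline{I^n}=Q\overline{I^{n-1}}$ for all $n\ge 2$; (2) the Huneke--Itoh intersection theorem $Q\cap\overline{I^2}=QI$; (3) Cohen--Macaulayness of $\overline{G}(I)$ forces $\nr(I)=\br(I)$. What you have written is instead a proof sketch of \thmref{Main} together with Corollary \ref{VerNTC} (the equivalence of the Gorenstein condition with $(r-1)Z^2+K_XZ=0$, the bound $\ell_A(A/I)\le p_g(A)+2-r$, and the ellipticity of $\overline{I^{r-1}}$). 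None of the three assertions of \thmref{Ideal-GCM} is addressed anywhere in your argument; worse, you explicitly invoke ``the Cohen--Macaulay hypothesis in the form $Q\cap\overline{I^n}=Q\,\overline{I^{n-1}}$ from \thmref{Ideal-GCM}'' as an ingredient, so read as a proof of \thmref{Ideal-GCM} your text is circular.

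For calibration: the paper gives no proof of \thmref{Ideal-GCM} either, citing Huckaba--Marley and Valabrega--Valla (and Huneke, Itoh for (2)), so a blind proof would have to supply the following. For (1), adapt the Valabrega--Valla criterion to the multiplicative filtration $\{\overline{I^n}\}$: the initial forms $a^*,b^*$ of a minimal reduction $Q=(a,b)$ form a regular sequence on $\overline{G}(I)$ exactly when $Q\cap\overline{I^n}=Q\overline{I^{n-1}}$ for all $n$, and since $\overline{G}(I)/(a^*,b^*)$ is Artinian this regularity is equivalent to Cohen--Macaulayness in dimension two. Part (2) is Itoh's (and Huneke's) theorem and needs its own cohomological or superficial-element argument; it cannot be read off the Hilbert function. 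Part (3), by contrast, is a short induction you could have given: once $\overline{I^{n+1}}=Q\overline{I^n}$ for $n=\nr(I)$, one has $\overline{I^{n+2}}\subseteq\overline{I^{n+1}}\subseteq Q$, hence by (1)
\[
\overline{I^{n+2}}=\overline{I^{n+2}}\cap Q=Q\,\overline{I^{n+1}},
\]
and inductively $\overline{I^{m+1}}=Q\overline{I^m}$ for all $m\ge \nr(I)$, giving $\br(I)=\nr(I)$. (Your Hilbert-function analysis of \thmref{Main} is a reasonable outline for that other theorem --- modulo the duality bookkeeping you yourself flag in the direction $(4)\Rightarrow(1)$, where the paper argues more elementarily via the inequalities of Lemma \ref{L-Ineq} --- but it is an answer to a different question than the one posed here.)
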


\par \vspace{2mm}
Note that $\nr(I)=1$ does not necessarily imply 
the Cohen-Macaulayness of $\overline{G}(I)$. 
%
\par \vspace{2mm}
\subsection{Geometric ideals and singularities}
We recall the notion of $p_g$-ideals and elliptic ideals,  
which was introduced by the 
authors in \cite{OWY1} or \cite[Theorem 3.2]{ORWY}.  

\begin{defn}[\textbf{$p_g$-ideal}] \label{def:pgideal}
Assume that $I = I_Z$. 
Then $I$ is called the \textit{$p_g$-ideal} if one of the following equivalent conditions is satisfied: 
\begin{enumerate}
\item $\br(I)=1$. 
\item $q(I)=p_g(A)$. 
\item $I$ is normal and $I^2=QI$ for some 
minimal reduction $Q$ of $I$. 
\item $\overline{G}:=\overline{G}(I)$ is Cohen-Macaulay and $a(\overline{G})<0$, where
 $a(\overline{G})=\max\{n \in \bbZ \,|\, [H^{2}_{\MM}(\overline{G})]_n \ne 0\}$, the \textit{$a$-invariant} 
of $\overline{G}$, and  $\MM$ is the maximal homogeneous ideal.
\end{enumerate}
\end{defn}

\begin{defn}[\textbf{Elliptic ideal}] \label{def:elliptic} 
An ideal $I=I_Z$ is called the \textit{elliptic ideal}
if one of the following equivalent conditions is satisfied: 
\begin{enumerate}
\item $\br(I)=2$. 
\item $q(I) =q(\infty I) < p_g(A)$.
\item $\overline{G}:=\overline{G}(I)$ is Cohen-Macaulay and $a(\overline{G})=0$. 
\end{enumerate}
\end{defn}

\par \vspace{2mm}
A normal local domain $A$ is called a \textit{rational singularity} if 
$p_g(A)=0$. 
Lipman's theorem says that any $\m$-primary integrally closed ideal 
$I$ in a rational singularity is a $p_g$-ideal. 

\par  \vspace{2mm}
A normal local domain $A$ is called an \textit{elliptic singularity} 
if $\chi(D)\ge 0$ for all cycles $D > 0$ and $\chi(F)=0$ for some cycle $F > 0$; see Wagreich \cite[pp.428]{Wa}. 
Okuma \cite{Ok} showed that $\br(I) \le 2$ for any $I=I_Z \subset A$. 
In fact, any $\m$-primary integrally closed ideal $I$ in an 
elliptic singularity is either a $p_g$-ideal or an elliptic ideal. 

\par  \vspace{2mm}
A normal local domain $A$ is called a \textit{strongly elliptic singularity} 
if $p_g(A)=1$. 
Any non-$p_g$-ideal $I=I_Z$ in a strongly elliptic singularity is a \textit{strongly elliptic ideal}, 
that is, it is an elliptic ideal with $\ell_A(\overline{I^2}/QI)=1$.

\subsection{Gorensteinness for normal tangent cones of geometric ideals}
\par  \vspace{2mm}
Gorensteinness of normal tangent cones of some geometric ideals ($p_g$-ideals and elliptic ideals) 
are known. 

\begin{thm}[  \textrm{cf. \cite{OWY6}}]  
\label{pgCM}
Assume that $A$ is Gorenstein and that $I$ is a $p_g$-ideal. 
Then the following conditions are equivalent$:$
\begin{enumerate}
\item $\overline{G}(I)$ is Gorenstein.
\item  $I$ is \emph{good} 
$($i.e., $I^2=QI$ and $Q:I=I$ for some (every) minimal reduction $Q$ of $I$$)$. 
\item $K_XZ=0$. 
\end{enumerate}
\end{thm}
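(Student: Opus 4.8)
The plan is to establish the two equivalences $(1)\Leftrightarrow(2)$ and $(2)\Leftrightarrow(3)$, using throughout that a $p_g$-ideal is normal with $I^2=QI$ (\defref{def:pgideal}), so that $\overline{G}(I)=G(I)$ is Cohen--Macaulay, and that $I\supsetneq Q$ since $A$ is not regular.

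For $(1)\Leftrightarrow(2)$ I would reduce to an Artinian ring. Writing $Q=(a,b)$, the initial forms $a^\ast,b^\ast$ of degree one form a homogeneous system of parameters of the two-dimensional Cohen--Macaulay ring $\overline{G}:=\overline{G}(I)$, hence a regular sequence; therefore $\overline{G}$ is Gorenstein iff $\overline{G}/(a^\ast,b^\ast)$ is. Using $I^2=QI\subseteq Q$ one checks
\[
\overline{G}/(a^\ast,b^\ast)=(A/I)\oplus(I/Q),
\]
concentrated in degrees $0$ and $1$, a local Artinian ring with graded maximal ideal $\n=(\m/I)\oplus(I/Q)$. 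Hence Gorensteinness is equivalent to $\dim_k\Soc=1$, and a degree-by-degree computation gives
\[
\Soc=\bigl((I:\m)\cap(Q:I)\bigr)/I\ \oplus\ \bigl((Q:\m)\cap I\bigr)/Q.
\]

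Next I would analyze the two summands. The point is that $A$ is Gorenstein and $Q$ is a parameter ideal, so $A/Q$ is Gorenstein Artinian; its socle $(Q:\m)/Q$ is therefore one-dimensional and, being the unique minimal ideal, is contained in the nonzero ideal $I/Q$. Thus the second summand is always $\cong k$. The first summand is exactly the socle of the $A/I$-module $(Q:I)/I$, which is nonzero precisely when $(Q:I)/I\neq 0$; since every nonzero module over the Artinian ring $A/I$ has nonzero socle, this summand vanishes iff $Q:I=I$. Combining, $\dim_k\Soc=1$ iff $Q:I=I$, which together with $I^2=QI$ is condition $(2)$; this proves $(1)\Leftrightarrow(2)$.

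For $(2)\Leftrightarrow(3)$ I would convert both into a single length identity. Kato's Riemann--Roch (\thmref{RRformula}) with $q(I)=p_g(A)$ gives $\ell_A(A/I)=\chi(Z)=-\tfrac12(Z^2+K_XZ)$, while $e_0(I)=\ell_A(A/Q)=-Z^2$; subtracting yields $K_XZ=\ell_A(I/Q)-\ell_A(A/I)$. On the other hand, Gorenstein (Matlis) duality in $A/Q$ gives $\ell_A((Q:I)/Q)=\ell_A(A/I)$, and since $I\subseteq Q:I$ always, $Q:I=I$ holds iff $\ell_A(I/Q)=\ell_A(A/I)$, i.e.\ iff $K_XZ=0$. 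This is $(2)\Leftrightarrow(3)$.

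The step I expect to be the main obstacle is the socle computation: because $\overline{G}_0=A/I$ is an Artinian local ring rather than a field, the socle of $\overline{G}/(a^\ast,b^\ast)$ need not be concentrated in top degree, and one must separate the genuine degree-$0$ contribution and identify its vanishing with $Q:I=I$. The lemma that unlocks this is that in the Gorenstein Artinian ring $A/Q$ the socle is simple and hence lies in every nonzero ideal, which pins the top-degree socle to be one-dimensional and reduces Gorensteinness to the vanishing of the degree-$0$ socle.
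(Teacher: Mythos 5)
Your proof is correct, and it is in essence the paper's own method: the paper never proves Theorem~\ref{pgCM} in the text (it is quoted from \cite{OWY6}), but your argument is precisely the $r=1$ specialization of the machinery used for Theorem~\ref{Main} --- the Artinian reduction $B=\overline{G}(I)/(a^{*},b^{*})\cong A/I\oplus I/Q$, the degree-by-degree socle computation via colon ideals, Matlis duality in the Gorenstein Artinian ring $A/Q$ giving $\ell_A((Q:I)/Q)=\ell_A(A/I)$, and Kato's Riemann--Roch together with $e_0(I)=-Z^2$ to convert $Q:I=I$ into $K_XZ=0$. The one step that deserves a citation rather than the bare phrase ``since $A$ is not regular'' is $I\supsetneq Q$: this rests on Goto's theorem that a Noetherian local ring (excellent and normal here) admitting an integrally closed parameter ideal must be regular, which is exactly why the paper's standing non-regularity hypothesis is needed.
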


\begin{thm}[\textrm{\cite{OWY6}}] \label{EllipGor}
Assume that $A$ is Gorenstein and $I$ is an elliptic ideal. 
For any minimal reduction $Q$ of $I$,  
the following conditions are equivalent$:$
\begin{enumerate}
\item $\overline{G}(I)$ is Gorenstein. 
\item $Q \colon I=Q+\overline{I^2}$. 
\item $\ell_A(\overline{I^2}/QI)=\ell_A(A/I)$. 
\item $Z^2+K_XZ=0$, that is, $\chi(Z)=0$
\end{enumerate}
When this is the case, $\ell_A(A/I) \le p_g(A)$.
\end{thm}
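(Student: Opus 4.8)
My plan is to split the statement into a \emph{numerical} part, the equivalences $(2)\Leftrightarrow(3)\Leftrightarrow(4)$ together with the final inequality, which I settle by length bookkeeping, and a \emph{ring-theoretic} part $(1)\Leftrightarrow(3)$, which I attack through the Artinian reduction of $\overline{G}(I)$ and Matlis duality over $A/Q$. First I record the numerical consequences of ellipticity: by \defref{def:elliptic} we have $\br(I)=2$ and $q(I)=q(\infty I)<p_g(A)$, so $\overline{I^{n+1}}=Q\overline{I^n}$ for all $n\ge 2$ and the sequence $q(nI)$ is constant from $n=1$ on. Then \proref{q(nI)formula}(1) with $n=1$ gives $\ell_A(\overline{I^2}/QI)=(q(0I)-q(I))-(q(I)-q(2I))=p_g(A)-q(\infty I)$, while Kato's formula (\thmref{RRformula}) gives $\ell_A(A/I)=\chi(Z)+p_g(A)-q(I)=\chi(Z)+(p_g(A)-q(\infty I))$. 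Subtracting, $\ell_A(A/I)-\ell_A(\overline{I^2}/QI)=\chi(Z)$; since $\chi(Z)=-(Z^2+K_XZ)/2$, condition $(3)$ is equivalent to $\chi(Z)=0$, i.e. to $Z^2+K_XZ=0$, which is $(4)$, and when these hold $\ell_A(A/I)=p_g(A)-q(\infty I)\le p_g(A)$, giving the final inequality.

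For $(2)\Leftrightarrow(3)$ I work in $B:=A/Q$, which is Gorenstein Artinian since $Q$ is generated by a regular sequence of length $\dim A=2$. Because $I\overline{I^2}\subseteq\overline{I^3}=Q\overline{I^2}\subseteq Q$, one has $\overline{I^2}\subseteq Q\colon I$, so the inclusion $L_2=Q+\overline{I^2}\subseteq Q\colon I$ is unconditional; hence $(2)$ is equivalent to the equality of lengths $\ell_A((Q\colon I)/Q)=\ell_A(L_2/Q)$. Writing $\mathfrak b=I/Q\subseteq B$, Matlis duality in $B$ gives $(Q\colon I)/Q=(0\colon_B\mathfrak b)$ with $\ell_A((Q\colon I)/Q)=\ell_B(B/\mathfrak b)=\ell_A(A/I)$, while Huneke--Itoh (\thmref{Ideal-GCM}(2)) gives $Q\cap\overline{I^2}=QI$, whence $\ell_A(L_2/Q)=\ell_A(\overline{I^2}/(Q\cap\overline{I^2}))=\ell_A(\overline{I^2}/QI)$. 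Thus $(2)$ holds iff $\ell_A(A/I)=\ell_A(\overline{I^2}/QI)$, i.e. iff $(3)$.

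It remains to tie in $(1)$. Fix $Q=(a,b)$; since $\overline{G}:=\overline{G}(I)$ is Cohen--Macaulay, the initial forms $a^{*},b^{*}\in[\overline{G}]_1$ form a regular sequence (Valabrega--Valla, \thmref{Ideal-GCM}(1)), so $\overline{G}$ is Gorenstein iff the Artinian reduction $\overline{G}':=\overline{G}/(a^{*},b^{*})\overline{G}$ is Gorenstein. Using $\overline{I^{n+1}}=Q\overline{I^n}$ for $n\ge 2$ and $\overline{I^3}=Q\overline{I^2}\subseteq QI$ (as $\overline{I^2}\subseteq I$), a direct computation shows $\overline{G}'$ is concentrated in degrees $0,1,2$ with length-Hilbert function $(h_0,h_1,h_2)$, where $h_0=\ell_A(A/I)$, $h_1=\ell_A(I/L_2)$, and $h_2=\ell_A(\overline{I^2}/QI)$. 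Moreover $a,b$ act as $0$ on $\overline{G}'$, so $\overline{G}'$ is a finite graded $B$-algebra and its graded canonical module is $\omega_{\overline{G}'}\cong\Hom_B(\overline{G}',B)$, whose length-Hilbert function is the reverse $(h_2,h_1,h_0)$. If $\overline{G}'$ is Gorenstein then $\omega_{\overline{G}'}\cong\overline{G}'$ up to shift, forcing symmetry $h_0=h_2$, i.e. $\ell_A(A/I)=\ell_A(\overline{I^2}/QI)$, which is $(3)$. This proves $(1)\Rightarrow(3)$.

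The main obstacle is the converse $(3)\Rightarrow(1)$. Condition $(3)$ makes the length-Hilbert functions of $\overline{G}'$ and of $\omega_{\overline{G}'}\cong\Hom_B(\overline{G}',B)$ coincide, both equal to $(h_0,h_1,h_0)$, but I must upgrade this numerical symmetry to an honest isomorphism $\overline{G}'\cong\omega_{\overline{G}'}$ of $\overline{G}'$-modules. The natural candidate is the $\overline{G}'$-linear map $\overline{G}'\to\Hom_B(\overline{G}',B)$ adjoint to the multiplication pairing $\overline{G}'\times\overline{G}'\to\overline{G}'$ composed with a $B$-linear functional onto the top piece $[\overline{G}']_2=\overline{I^2}/QI$; the task is to show that under $(3)$ this pairing is perfect, equivalently that the socle of $\overline{G}'$ is one-dimensional and concentrated in degree $2$. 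I expect this to be exactly where the Gorensteinness of $A$ (hence of $B=A/Q$) is indispensable, through the double-annihilator relation in $B$ already used for $(2)\Leftrightarrow(3)$: the equality $Q\colon I=L_2$ says precisely that $(0\colon_B\mathfrak b)=\mathfrak c$ with $\mathfrak c=L_2/Q$, and I plan to propagate this self-duality along the filtration $B\supseteq I/Q\supseteq\overline{I^2}/Q\supseteq 0$ to certify the pairing on $\overline{G}'$ as perfect. Closing this last step rigorously is the crux of the argument.
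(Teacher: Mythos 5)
Your numerical and duality bookkeeping is correct and runs parallel to the paper's treatment (the theorem is quoted from [OWY6], but it is precisely the $r=2$ case of \thmref{Main}, which this paper does prove): your identity $\ell_A(A/I)-\ell_A(\overline{I^2}/QI)=\chi(Z)$ obtained from \proref{q(nI)formula} and Kato's formula is the $r=2$ instance of Eq.~(\ref{Eqbb}), giving $(3)\Leftrightarrow(4)$ and the bound $\ell_A(A/I)=p_g(A)-q(\infty I)\le p_g(A)$; your $(2)\Leftrightarrow(3)$ via Matlis duality over the Artinian Gorenstein ring $A/Q$ together with Huneke--Itoh is exactly \lemref{sum-B}(2)(3) combined with \lemref{L-Ineq}; and your $(1)\Rightarrow(3)$ via the reversed Hilbert function of $\Hom_{A/Q}(\overline{G}',A/Q)$ is the paper's ``$(4)\Rightarrow(2)$'' step in \proref{Mainpart}, where Gorensteinness of $B$ forces $b_k=b_{r-k}$.

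However, there is a genuine gap: the implication $(3)\Rightarrow(1)$, which you yourself flag as the crux, is only announced, not proved. You propose to upgrade the numerical symmetry to an isomorphism $\overline{G}'\cong\omega_{\overline{G}'}$ by certifying the multiplication pairing as perfect, but you never carry this out; as written, your argument establishes only $(2)\Leftrightarrow(3)\Leftrightarrow(4)$, $(1)\Rightarrow(3)$, and the final inequality. The paper closes this step (``$(3)'\Rightarrow(4)$'' in \proref{Mainpart}) not by building a pairing but by the one-dimensional-socle criterion, and every ingredient is already on your table. Having $(2)$, i.e.\ $Q:I=L_2$, Matlis duality in $A/Q$ gives the double-annihilator identity $Q:(Q:I)=I$, that is, $Q:L_2=I$. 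Now take a homogeneous $x^*\in\Soc(B)$ with $B=A/I\oplus I/L_2\oplus L_2/Q$: if $x^*\in B_0$, then $x\,\overline{I^2}\subseteq Q$ forces $x\in Q:L_2=I$, so $x^*=0$; if $x^*\in B_1$, then $xI\subseteq Q$ forces $x\in Q:I=L_2$, so $x^*=0$; and in degree $2$, since $I\subseteq\m$ gives $Q:\m\subseteq Q:I$, one has $\Soc(L_2/Q)=\Soc\bigl((Q:I)/Q\bigr)=(Q:\m)/Q\cong k$ by Gorensteinness of $A/Q$. Hence $\dim_k\Soc(B)=1$ and $B$, therefore $\overline{G}(I)$, is Gorenstein. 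The missing idea, then, is to replace your attempted direct verification of a perfect pairing by this three-case socle computation resting on the double-annihilator relation $Q:L_2=I$; without it the chain of equivalences is not closed.
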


\par \vspace{2mm}
In the next section, we prove the main theorem, which generalize these characterization 
in the case of $\br(I) \ge 3$.

\section{Gorensteinness of normal tangent cones}

In this section, we keep the notation as in the previous section. 
The main aim of this section is to prove the following theorem, 
which gives a criterion for Gorensteinness of normal tangent cones 
 of $I=I_Z$ in terms of the cycle $Z$.

\par \vspace{2mm}
For an ideal $I=I_Z$ and a minimal reduction $Q$ of $I$, 
we put 
\[
L_n:=Q+\overline{I^n} 
\]
for every integer $n \ge 1$. 

\begin{thm} \label{Main}
Let $(A,\m)$ be a two-dimensional excellent normal Gorenstein local 
domain containing a field $k=\overline{k}$. 
Let $I=I_Z$ with $r=\br(I) \ge 2$ and $Q$ its minimal reduction of $I$. 
Assume that $\overline{G}(I)$ is Cohen-Macaulay. 
Then the following conditions are equivalent$:$
\begin{enumerate}
\item $\overline{G}(I)$ is Gorenstein. 
\item 
$Q \colon L_n=L_{r+1-n}$ for every $n=1,2,\ldots, r$. 
\item
$\ell_A(A/L_n)+\ell_A(A/L_{r+1-n})=e_0(I)$ for  every $n=1,2,\ldots,\lceil \frac{r}{2} \rceil$. 
\item $(r-1)Z^2+K_XZ=0$  or, equivalently, 
\[
(r-2) Z^2 = 2 \cdot \chi(Z).
\] 
\end{enumerate}
When this is the case, 
$\ell_A(A/I) = \ell_A(\overline{I^{r}}/Q\overline{I^{r-1}}) 
\le p_g(A)+2-r$.   
\end{thm}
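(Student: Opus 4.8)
The plan is to establish the chain of equivalences by routing through graded-ring theory for $(1)\Leftrightarrow(2)\Leftrightarrow(3)$, then to connect the purely algebraic condition $(3)$ (or $(2)$) to the numerical/geometric condition $(4)$ via Kato's Riemann--Roch formula. I would take as given, following the discussion before the theorem, that the equivalence of $(1)$, $(2)$, $(3)$ is essentially \cite[Theorem 4.4]{HKU}: since $\overline{G}(I)$ is assumed Cohen--Macaulay of dimension $2$, Gorensteinness is detected by the symmetry of the colon/length data of the $L_n=Q+\overline{I^n}$, which are exactly the filtration ideals whose successive quotients build $\overline{G}(I)$. Concretely, I would first note that Cohen--Macaulayness gives $Q\cap\overline{I^n}=Q\overline{I^{n-1}}$ for all $n$ by \thmref{Ideal-GCM}(1), so that $A/L_n\cong \overline{G}(I)_{\ge n}$-type quotients have lengths $\ell_A(A/L_n)$ that sum correctly, and then invoke the standard fact that a Cohen--Macaulay graded ring is Gorenstein iff its Hilbert function of the Artinian reduction is symmetric; translating this symmetry into the ideal-theoretic language yields $(2)$ and the length form $(3)$.

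The heart of my contribution, the equivalence $(3)\Leftrightarrow(4)$, I would prove by computing both $\ell_A(A/L_n)$ explicitly in geometric terms. Since $\overline{G}(I)$ is Cohen--Macaulay we have $\br(I)=\nr(I)=r$ by \thmref{Ideal-GCM}(3), and $L_n=Q+\overline{I^n}$ satisfies $\ell_A(A/L_n)=\ell_A(A/Q)-\ell_A(\overline{I^n}+Q/Q)$. Using $\ell_A(A/Q)=e_0(I)$ and the Cohen--Macaulay identity $Q\cap\overline{I^n}=Q\overline{I^{n-1}}$, I can rewrite $\ell_A(A/L_n)=e_0(I)-\ell_A(\overline{I^n}/Q\overline{I^{n-1}})$ for each $n$. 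Summing the telescoping relations and applying \proref{q(nI)formula}, which expresses $\ell_A(\overline{I^{n+1}}/Q\overline{I^n})$ through the second differences of $q_n=q(nI)$, reduces condition $(3)$ to a single numerical identity among the $q_n$. Finally I feed Kato's formula \thmref{RRformula}, $\ell_A(A/I)+q(I)=\chi(Z)+p_g(A)$ with $\chi(Z)=-(Z^2+K_XZ)/2$, together with the analogous formulas for $\overline{I^n}=I_{nZ}$ (so $\chi(nZ)=-(n^2Z^2+nK_XZ)/2$), to turn the $q_n$-identity into the quadratic relation in $Z^2$ and $K_XZ$. The computation should collapse precisely to $(r-1)Z^2+K_XZ=0$, equivalently $(r-2)Z^2=2\chi(Z)$.

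I expect the main obstacle to be the bookkeeping in the step $(3)\Leftrightarrow(4)$: condition $(3)$ is a family of $\lceil r/2\rceil$ symmetric length equations, and I must show that, \emph{given} Cohen--Macaulayness, all of them are simultaneously equivalent to the single geometric equation in $(4)$. The key technical point is that Cohen--Macaulayness forces the lengths $\ell_A(\overline{I^{n+1}}/Q\overline{I^n})$ to be a nonincreasing sequence in $n$ (equivalently, the first differences $q_{n-1}-q_n$ are nonincreasing and vanish for $n\ge r$), so the symmetric sums in $(3)$ are governed by a convexity/linearity constraint; I anticipate that the full list of equations in $(3)$ reduces to the single "endpoint" equation $n=1$, which is where $\chi(Z)$ and $Z^2$ enter linearly in $r$. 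Verifying that the intermediate equations are automatic under Cohen--Macaulayness — rather than independent constraints — is the delicate part.

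For the final length bound, once $(4)$ holds I would combine the now-established identity $\ell_A(A/I)=\ell_A(\overline{I^{r}}/Q\overline{I^{r-1}})$ (the $n=r$ instance of the symmetry, using $L_r=Q+\overline{I^r}$ and $L_1=Q+I$) with \proref{q(nI)formula} and the inequality $q_{r-1}=q_r=\cdots=q(\infty I)\ge 0$; tracking the telescoping sum $\sum_{n}(q_{n-1}-q_n)=p_g(A)-q(\infty I)$ against the $r-1$ positive summands yields $\ell_A(A/I)\le p_g(A)+2-r$.
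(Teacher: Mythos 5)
Your outline for $(1)\Leftrightarrow(2)\Leftrightarrow(3)$ and for the closing bound $\ell_A(A/I)\le p_g(A)+2-r$ matches the paper in substance (the paper also defers to HKU and then reproves the equivalence via the Artinian reduction $B=\overline{G}(I)/(a^*,b^*)B$; note, though, that your ``standard fact'' that a Cohen--Macaulay graded ring is Gorenstein iff the Hilbert function of its Artinian reduction is symmetric is false in general --- symmetry is necessary, not sufficient --- and in this setting it is rescued only by the containments $L_n\subset Q:L_{r+1-n}$ of \lemref{L-Ineq}, which upgrade length equalities to ideal equalities and permit a socle computation as in \proref{Mainpart}). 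The genuine gap is in the step you yourself flag as delicate, namely $(4)\Rightarrow(3)$, which is the paper's main new content. Your plan is that, under Cohen--Macaulayness, the family of $\lceil r/2\rceil$ equations in $(3)$ collapses to the single ``endpoint'' equation $n=1$, which is then matched with $(4)$. This is false. Writing $b_n=\ell_A(B_n)$, the $n=1$ equation says $b_0=b_r$, and the monotonicity of $\ell_A(\overline{I^{n+1}}/Q\overline{I^n})=b_{n+1}+\cdots+b_r$ does not force the intermediate symmetries. Concretely, take the Brieskorn hypersurface of type $(4,7,7)$, $A=K[x,y,z]_{(x,y,z)}/(x^4+y^7+z^7)$, and $I=\m$. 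Then $\overline{G}(\m)$ is Cohen--Macaulay, $e_0(\m)=4$, $r=\br(\m)=\lfloor 21/4\rfloor=5$, and by \corref{Lncal} (with $n_1=1$, $n_2=3$, $n_3=5$) the lengths $\ell_A(A/L_n)$, $n=1,\dots,5$, are $1,2,2,3,3$; equivalently $(b_0,\dots,b_5)=(1,1,0,1,0,1)$. The equations for $n=1$ and $n=3$ hold ($1+3=4$, $2+2=4$), yet the one for $n=2$ fails ($2+3\ne 4$), and indeed $\overline{G}(\m)$ is not Gorenstein by \thmref{BmaxThm} since $7\equiv 3\pmod 4$. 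So no single equation from the family $(3)$ --- endpoint or otherwise --- implies the rest, and the endpoint equation is strictly weaker than $(4)$.

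What the correct argument requires, and what your proposal omits, is the full family of inequalities in \lemref{L-Ineq}: from $\overline{I^n}\cdot\overline{I^{r+1-n}}\subset\overline{I^{r+1}}\subset Q$ one gets $\delta_n:=(b_0+\cdots+b_{n-1})-(b_{r+1-n}+\cdots+b_r)\ge 0$ for every $n$. Your telescoping computation with \proref{q(nI)formula} and Kato's formula produces (as in the paper) the single identity $b_2+2b_3+\cdots+(r-1)b_r=\ell_A(A/I)-\chi(Z)$, i.e.\ Eq.~(\ref{Eqbb}). Condition $(4)$ converts this identity into the vanishing of a \emph{positively weighted sum} $\sum_n w_n\delta_n=0$ over $n=1,\dots,\lceil r/2\rceil$, and only because each $\delta_n\ge 0$ can one conclude $\delta_n=0$ for all $n$, which is $(3)$; the converse direction $(1)\Rightarrow(4)$ substitutes the Gorenstein symmetry $b_k=b_{r-k}$ into the same identity. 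In short, $(4)$ corresponds to a weighted sum of \emph{all} the equations in $(3)$, not to the $n=1$ equation, and it is the positivity constraints $\delta_n\ge 0$ --- not convexity or monotonicity of the $b$'s --- that allow one numerical identity to control the entire family.
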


\par \vspace{3mm}
In what follows, we prove the theorem above. 
We put $Q=(a,b)$ and 
assume that $\overline{G}(I)$ is Cohen-Macaulay. 
If we put $a^{*}=a+\overline{I^2}$, 
$b^{*}=b+\overline{I^2} \in \overline{G}(I)_1$, 
then $a^{*}$, $b^{*}$ forms a regular sequence. 
We put 

\begin{eqnarray*}
B &:= & \overline{G}(I)/(a^{*},b^{*})\overline{G}(I) \\
& \cong & \bigoplus_{n=0}^{\infty} \dfrac{\overline{I^n}}{Q\overline{I^{n-1}}+\overline{I^{n+1}}}   
=  \bigoplus_{n=0}^{\infty} \dfrac{\overline{I^n}}
{Q \cap \overline{I^{n}}+\overline{I^{n+1}}} 
 \cong  \bigoplus_{n=0}^{\infty}  
\dfrac{(Q+ \overline{I^n})+\overline{I^{n+1}}}{Q +\overline{I^{n+1}}} \\
& = & A/I \oplus I/L_2 \oplus L_2/L_3 \oplus \cdots \oplus L_{r-1}/L_r \oplus L_r/Q. 
\end{eqnarray*}
Then $B$ is Gorenstein if and only if so is $\overline{G}(I)$. 
We put $b_n=\ell_A(B_n)$ for every integer   $n \ge 0$. 

\begin{lem} \label{B-lem}
We have
\begin{enumerate}
\item $b_0=\ell_A(A/I)$ and $b_r=\ell_A(\overline{I^r}/Q\overline{I^{r-1}})$. 
\item $b_0+b_1+\cdots+b_r = \ell_A(B)=e_0(I)=\ell_A(A/Q)$. 
\end{enumerate}
\end{lem}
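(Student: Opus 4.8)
The plan is to read off both statements directly from the degree-by-degree description of $B$ already obtained above, namely $B_0 = A/I$ and $B_n = L_n/L_{n+1}$ for $1 \le n \le r$, together with the two boundary identifications $L_1 = I$ and $L_{r+1} = Q$. The first holds because $Q \subseteq I$ forces $L_1 = Q + \overline{I} = I$; the second because $r = \br(I)$ gives $\overline{I^{r+1}} = Q\overline{I^r} \subseteq Q$, whence $L_{r+1} = Q + \overline{I^{r+1}} = Q$. Since $B = \overline{G}(I)/(a^{*},b^{*})\overline{G}(I)$ is the quotient of the two-dimensional Cohen-Macaulay ring $\overline{G}(I)$ by a homogeneous regular sequence of length two, it is Artinian, and it is visibly concentrated in degrees $0,\dots,r$; in particular $\ell_A(B)$ is finite.

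For part $(1)$, the value $b_0 = \ell_A(A/I)$ is immediate from $B_0 = A/I$. For $b_r$ I would use the middle isomorphism $B_r \cong \overline{I^r}/(Q\overline{I^{r-1}} + \overline{I^{r+1}})$ coming from the displayed computation of $B$, and then observe that $\overline{I^{r+1}} = Q\overline{I^r} \subseteq Q\overline{I^{r-1}}$, using $\overline{I^r} \subseteq \overline{I^{r-1}}$ and the defining relation of $\br(I)$ at $n=r$. The term $\overline{I^{r+1}}$ is thereby absorbed and $B_r \cong \overline{I^r}/Q\overline{I^{r-1}}$, giving $b_r = \ell_A(\overline{I^r}/Q\overline{I^{r-1}})$. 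The same conclusion can be reached from $B_r = L_r/Q \cong \overline{I^r}/(Q \cap \overline{I^r})$ together with the Cohen-Macaulay criterion $Q \cap \overline{I^r} = Q\overline{I^{r-1}}$ of \thmref{Ideal-GCM}$(1)$.

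For part $(2)$, the description of $B$ as the successive quotients of a filtration telescopes. Summing lengths along
\[
A \supseteq L_1 = I \supseteq L_2 \supseteq \cdots \supseteq L_r \supseteq L_{r+1} = Q
\]
gives
\[
\ell_A(B) = \ell_A(A/L_1) + \sum_{n=1}^{r} \ell_A(L_n/L_{n+1}) = \ell_A(A/L_{r+1}) = \ell_A(A/Q).
\]
Finally I would invoke that $A$ is Cohen-Macaulay and that $Q$ is a minimal reduction of the $\m$-primary ideal $I$: since $Q$ is then a parameter ideal in a Cohen-Macaulay ring, $\ell_A(A/Q) = e_0(Q)$, and since $Q$ is a reduction of $I$ we have $e_0(Q) = e_0(I)$. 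Combining these yields $\ell_A(B) = e_0(I) = \ell_A(A/Q)$.

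I do not expect a genuine obstacle here: the lemma is bookkeeping on the already-constructed decomposition of $B$. The only points demanding care are the two boundary identities $L_1 = I$ and $L_{r+1} = Q$, which fix where the grading of $B$ begins and ends, and the inclusion $\overline{I^{r+1}} \subseteq Q\overline{I^{r-1}}$ needed to match $b_r$ with $\ell_A(\overline{I^{r}}/Q\overline{I^{r-1}})$.
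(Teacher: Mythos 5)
Your proof is correct and follows essentially the same route as the paper, which states this lemma without proof as immediate bookkeeping from the displayed decomposition $B = A/I \oplus I/L_2 \oplus \cdots \oplus L_r/Q$. The details you supply (the boundary identities $L_1=I$ and $L_{r+1}=Q$, the absorption $\overline{I^{r+1}}=Q\overline{I^r}\subseteq Q\overline{I^{r-1}}$, the telescoping of the filtration, and $\ell_A(A/Q)=e_0(Q)=e_0(I)$ for a parameter ideal in a Cohen--Macaulay ring) are exactly the intended justifications.
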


\par \vspace{2mm}
 We can describe $\ell_A(A/L_n)$ and $\ell_A(A/(Q:L_n))$ 
in terms of $b_i$'s.  

\begin{lem} \label{sum-B}
For an integer $n$ 
with $1 \le n \le r$, we have 
\begin{enumerate}
\item $\ell_A(A/L_n)=b_0+b_1+\cdots+b_{n-1}$. 
\item $\ell_A(A/(Q:L_n))=b_n+b_{n+1}+\cdots + b_r$. 
\item $\ell_A(\overline{I^n}/Q\overline{I^{n-1}})=b_n+b_{n+1}+\cdots+b_r$. 
\end{enumerate}
\end{lem}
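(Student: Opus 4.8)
The plan is to first read off the homogeneous pieces of $B$ from the displayed decomposition. Writing $L_0 = A$ and $L_{r+1} = Q$ (the latter because $\overline{I^{r+1}} = Q\overline{I^r} \subseteq Q$ when $r = \br(I)$, so that $L_{r+1} = Q + \overline{I^{r+1}} = Q$), the decomposition identifies $B_n \cong L_n/L_{n+1}$ for every $0 \le n \le r$; hence $b_n = \ell_A(L_n/L_{n+1})$. With this in hand, part (1) is immediate: I would telescope along the chain $A = L_0 \supseteq L_1 \supseteq \cdots \supseteq L_n$ and use additivity of length to get $\ell_A(A/L_n) = \sum_{i=0}^{n-1}\ell_A(L_i/L_{i+1}) = b_0 + \cdots + b_{n-1}$.

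The heart of the argument is part (2), where I would exploit the Gorensteinness of $A$. Since $Q$ is a minimal reduction of an $\m$-primary ideal, it is generated by a system of parameters, so $R := A/Q$ is a zero-dimensional Gorenstein local ring with $\ell_A(R) = \ell_A(A/Q) = e_0(I)$. In an Artinian Gorenstein local ring, Matlis duality supplies the perfect colon pairing $\ell_A(R/\mathfrak a) + \ell_A(R/(0:_R \mathfrak a)) = \ell_A(R)$ for every ideal $\mathfrak a \subseteq R$. I would apply this to $\mathfrak a = L_n/Q$; since $0:_R(L_n/Q) = (Q:L_n)/Q$, this reads $\ell_A(A/L_n) + \ell_A(A/(Q:L_n)) = e_0(I)$. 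Combining with \lemref{B-lem}(2), which gives $\sum_{i=0}^r b_i = e_0(I)$, and with part (1), I would conclude $\ell_A(A/(Q:L_n)) = e_0(I) - (b_0 + \cdots + b_{n-1}) = b_n + \cdots + b_r$. This is the only place where genuine content enters: the Gorenstein hypothesis on $A$ (as opposed to mere Cohen--Macaulayness) is used precisely to obtain the self-duality of $A/Q$. The remaining tasks---verifying that $R$ is Artinian Gorenstein and checking the annihilator identity $0:_R(L_n/Q) = (Q:L_n)/Q$---are formal, so I expect the bookkeeping to be routine.

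Finally, for part (3) I would invoke the Cohen--Macaulayness of $\overline{G}(I)$ through \thmref{Ideal-GCM}(1), which gives $Q \cap \overline{I^n} = Q\overline{I^{n-1}}$ for all $n \ge 2$ (the case $n = 1$ being trivial since $Q \subseteq I$). The second isomorphism theorem then identifies
\[
\overline{I^n}/Q\overline{I^{n-1}} = \overline{I^n}/(Q \cap \overline{I^n}) \cong (Q + \overline{I^n})/Q = L_n/Q.
\]
Taking lengths and subtracting part (1) from $\ell_A(A/Q) = e_0(I)$ yields $\ell_A(\overline{I^n}/Q\overline{I^{n-1}}) = \ell_A(L_n/Q) = b_n + \cdots + b_r$, which simultaneously establishes part (3) and exhibits the coincidence of the right-hand sides in (2) and (3).
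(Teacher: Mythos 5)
Your proof is correct and takes essentially the same approach as the paper: additivity of length along the chain $A=L_0\supseteq L_1\supseteq\cdots$ for (1), Matlis duality over the Artinian Gorenstein ring $A/Q$ for (2), and the Cohen--Macaulayness of $\overline{G}(I)$ (via $Q\cap\overline{I^n}=Q\overline{I^{n-1}}$) for (3). The only cosmetic differences are the order of (2) and (3) and your phrasing of duality as the colon pairing $\ell_A(A/L_n)+\ell_A(A/(Q:L_n))=\ell_A(A/Q)$ rather than the paper's $\ell_A(\Hom_A(A/L_n,A/Q))=\ell_A(A/L_n)$.
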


\begin{proof}
(1) $\ell_A(A/L_n)=\ell_A(A/L_1)+\ell_A(L_1/L_2)+\cdots +\ell_A(L_{n-1}/L_n)=b_0+b_1+\cdots+b_{n-1}$. 
\vspace{3mm}
(3) As $\overline{G}(I)$ is Cohen-Macaulay, we have 
\begin{eqnarray*}
\ell_A(\overline{I^n}/Q\overline{I^{n-1}}) 
& = & \ell_A(\overline{I^n}/Q \cap \overline{I^{n}}) 
 = \ell_A(L_n/Q) 
 =  \ell_A(B)-\ell_A(A/L_n) \\
& = & (b_0+b_1+\cdots+b_r)-(b_0+b_1+\cdots+b_{n-1}) \quad \text{by (1)}\\
& = & b_n+b_{n+1}+\cdots + b_r. 
\end{eqnarray*}
\par 
(2) Since $A/Q$ is Gorenstein, 
$\ell_A(\Hom_A(A/L_n, A/Q))=\ell_A(A/L_n)$ by Matlis duality. 
Thus 
\begin{eqnarray*}
\ell_A(A/(Q:L_n))
&=&\ell_A(A/Q)-\ell_A((Q:L_n)/Q) \\ 
&=&\ell_A(B)-\ell_A(\Hom_A(A/L_n, A/Q)) \\
&=&\ell_A(B)-\ell_A(A/L_n) \\
&=& b_n+b_{n+1}+\cdots +b_r, 
\end{eqnarray*}
as required.  
\end{proof}

\begin{lem} \label{L-Ineq}
For every $n=1,2,\ldots,r$, we have 
\begin{enumerate}
\item $L_n \subset Q: L_{r+1-n}$. 
\item $b_{r+1-n}+\cdots+b_{r-1}+b_r \le b_0+b_1+\cdots +b_{n-1}$. 
\item $\ell_A(A/L_n)+\ell_A(A/L_{r+1-n}) \ge e_0(I)$. 
\end{enumerate}
Furthermore, if equality holds in one of the above equations, then 
equality holds in the others. 
\end{lem}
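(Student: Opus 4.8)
The plan is to prove the three claims of Lemma~\ref{L-Ineq} in the order (1) $\Rightarrow$ (2) $\Rightarrow$ (3), and then to establish the final sentence about equality propagating across the three statements. The underlying principle is that all three inequalities are really the \emph{same} statement read through the dictionary of Lemma~\ref{sum-B}, together with the containment in part (1); so the core of the work is proving (1) and then translating.

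First I would prove (1), namely $L_n \subset Q : L_{r+1-n}$. Unwinding the definition $L_m = Q + \overline{I^m}$, this amounts to showing $L_n \cdot L_{r+1-n} \subset Q$, and since $Q \subset L_m$ for all $m$, the only genuine content is the inclusion $\overline{I^n}\cdot \overline{I^{r+1-n}} \subset Q$. The natural tool is the normal reduction number: because $r = \br(I)$, we have $\overline{I^{m+1}} = Q\overline{I^m}$ for all $m \ge r$, and iterating gives $\overline{I^{m}} \subset Q$ for $m \ge r+1$ (indeed $\overline{I^{r+1}} = Q\overline{I^r} \subset Q$). Now $\overline{I^n}\cdot\overline{I^{r+1-n}} \subset \overline{I^{n+(r+1-n)}} = \overline{I^{r+1}} \subset Q$, using that the product of integral closures lands in the integral closure of the product of powers. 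This gives (1) cleanly.

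Next I would deduce (2) and (3) by duality. Part (1) gives an inclusion $L_n \subset Q : L_{r+1-n}$, so taking lengths of $A/(-)$ reverses the inequality: $\ell_A(A/(Q:L_{r+1-n})) \le \ell_A(A/L_n)$. By Lemma~\ref{sum-B}(2) the left side is $b_{r+1-n}+\cdots+b_r$, and by Lemma~\ref{sum-B}(1) the right side is $b_0+\cdots+b_{n-1}$; this is exactly inequality (2). For (3), add $\ell_A(A/L_n)+\ell_A(A/L_{r+1-n})$ using Lemma~\ref{sum-B}(1) on both terms to get $(b_0+\cdots+b_{n-1})+(b_0+\cdots+b_{r-n})$, and then I would invoke (2) in the form $b_{r+1-n}+\cdots+b_r \le b_0+\cdots+b_{n-1}$ together with the total-sum identity $b_0+\cdots+b_r = e_0(I)$ from Lemma~\ref{B-lem}(2); rearranging shows the pairwise sum is at least $e_0(I)$, with equality precisely when (2) is an equality.

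Finally, for the equality-propagation clause, the key observation is that the three displayed inequalities are logically equivalent term-by-term, not merely in aggregate. The passage (1) $\Rightarrow$ (2) via Matlis duality on $A/Q$ is an equality of lengths under the inclusion $L_n \subset Q:L_{r+1-n}$, so equality in (2) forces $L_n = Q:L_{r+1-n}$, i.e.\ equality in the containment (1); conversely equality in (1) gives equality in (2). The step (2) $\Leftrightarrow$ (3) is the purely arithmetic rearrangement above, where the two sides differ by exactly the slack in (2). I expect the main obstacle to be bookkeeping rather than ideas: one must be careful that the duality length identity $\ell_A(\Hom_A(A/L_n,A/Q)) = \ell_A(A/L_n)$ (already recorded in the proof of Lemma~\ref{sum-B}) is applied with the correct index, and that the telescoping sums line up so that ``equality in one forces equality in all'' is genuinely an index-by-index statement. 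I would close by remarking that this mutual equality is the engine driving the equivalence of conditions (2) and (3) in Theorem~\ref{Main}.
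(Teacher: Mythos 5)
Your proposal is correct and follows essentially the same route as the paper: part (1) via $\overline{I^n}\cdot\overline{I^{r+1-n}}\subset\overline{I^{r+1}}=Q\overline{I^r}\subset Q$, part (2) by taking colengths in that inclusion and translating through Lemma~\ref{sum-B}, and part (3) by the same arithmetic rearrangement using $\ell_A(B)=e_0(I)$. Your explicit treatment of the equality-propagation clause (finite colength plus inclusion forces equality of ideals) is exactly what the paper leaves implicit, so there is nothing to correct.
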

\begin{proof}
(1) $\overline{I^n}\cdot \overline{I^{r+1-n}} \subset 
\overline{I^{n+(r+1-n)}}=\overline{I^{r+1}}\subset Q$ implies  
$\overline{I^n} \subset Q: \overline{I^{r+1-n}}$
$ =Q : L_{r+1-n}$. 
Hence $L_n \subset Q:L_{r+1-n}$.
\par \vspace{2mm} \par \noindent
(2) By (1), we have $\ell_A(A/(Q:L_{r+1-n})) \le \ell_A(A/L_n)$. 
Lemma $\ref{sum-B}$ yields 
\[
b_{r+1-n} + \cdots + b_{r-1}+b_r \le b_0+b_1+\cdots+b_{n-1}. 
\]
\par \vspace{2mm} \par \noindent 
(3) By Lemma $\ref{sum-B}$, we obtain 
\begin{eqnarray*}
\ell_A(A/L_n) &=& b_0+b_1+\cdots +b_{n-1}, \\
e_0(I)-\ell_A(A/L_{r+1-n})
&=& (b_0+b_1+\cdots+b_r)-(b_0+b_1+\cdots+b_{r-n}) \\
&=& b_{r+1-n}+\cdots+b_{r-1}+b_r. 
\end{eqnarray*}
Hence the inequality in (2) means 
$e_0(I)-\ell_A(A/L_{r+1-n}) \le \ell_A(A/L_n)$.
\end{proof}

\par \vspace{2mm}
 Although the following proposition is known (see \cite[Theorem 4.3]{HKU}), 
we will give a proof here for reader's convenience.

\begin{prop} \label{Mainpart}
The following conditions are equivalent$:$
\begin{enumerate}
\item $L_n=Q:L_{r+1-n}$ for every $n=1,2,\ldots, \lceil \frac{r}{2} \rceil$.  
\item[$(1)'$] $L_n=Q:L_{r+1-n}$ for every $n=1,2,\ldots, r$.
\item $b_{r+1-n}+\cdots +b_{r-1}+b_r=b_0+b_1+\cdots+b_{n-1}$ 
for every $n=1,2,\ldots, \lceil \frac{r}{2} \rceil$.  
\item[$(2)'$] $b_{r+1-n}+\cdots +b_{r-1}+b_r=b_0+b_1+\cdots+b_{n-1}$ 
for every $n=1,2,\ldots, r$. 
\item $\ell_A(A/L_n)+\ell_A(A/L_{r+1-n}) =e_0(I)$ 
 for every $n=1,2,\ldots, \lceil \frac{r}{2} \rceil$.  
\item[$(3)'$] $\ell_A(A/L_n)+\ell_A(A/L_{r+1-n}) =e_0(I)$ 
 for every $n=1,2,\ldots, r$. 
\item $B$ is Gorenstein. 
\end{enumerate}
\end{prop}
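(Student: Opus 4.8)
The plan is to establish the equivalence of the conditions $(1)$, $(2)$, $(3)$ (the ``half-range'' versions) with their primed counterparts $(1)'$, $(2)'$, $(3)'$ (the ``full-range'' versions) and with Gorensteinness of $B$, by exploiting the length translations already recorded in Lemma \ref{sum-B} together with the symmetry inherent in the inequalities of Lemma \ref{L-Ineq}. The central observation is that Lemma \ref{L-Ineq} provides a system of inequalities indexed by $n=1,\dots,r$, and that each individual equality in any one of the three inequalities forces equality in the other two simultaneously. Thus $(1)$, $(2)$, $(3)$ are term-by-term equivalent for each fixed $n$, and likewise $(1)'$, $(2)'$, $(3)'$. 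This reduces the whole proposition to two independent tasks: first, showing that the half-range conditions are equivalent to the full-range conditions; second, connecting the full-range equalities to Gorensteinness of $B$.

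For the first task, I would argue that the inequality $\ell_A(A/L_n)+\ell_A(A/L_{r+1-n}) \ge e_0(I)$ from Lemma \ref{L-Ineq}(3) is \emph{symmetric} under the involution $n \mapsto r+1-n$. Hence the equality for index $n$ is literally the same statement as the equality for index $r+1-n$. Since $n$ ranges over $1,\dots, r$, the indices $n$ and $r+1-n$ together sweep out the full range, and restricting to $n \le \lceil r/2 \rceil$ captures one representative from each pair $\{n, r+1-n\}$. Therefore the half-range conditions automatically imply the full-range conditions, and the reverse implication is trivial. This handles $(1) \Leftrightarrow (1)'$, $(2) \Leftrightarrow (2)'$, and $(3) \Leftrightarrow (3)'$ once the within-index equivalences are in place.

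For the second task, I would use the fact that $B$ is a zero-dimensional graded Gorenstein (Artinian) $k$-algebra, so that Gorensteinness is equivalent to the symmetry of its Hilbert function together with the one-dimensionality of its socle in top degree; more precisely, for an Artinian standard-graded algebra $B = \bigoplus_{n=0}^{r} B_n$ with $b_r \ne 0$, Gorensteinness is equivalent to the Poincaré-duality symmetry $b_n = b_{r-n}$ for all $n$ \emph{together with} the perfect pairing $B_n \times B_{r-n} \to B_r \cong k$. The condition $(2)'$, namely $b_{r+1-n}+\cdots+b_r = b_0+\cdots+b_{n-1}$ for all $n$, is precisely the telescoping reformulation of the Hilbert-function symmetry $b_{n-1}=b_{r+1-n}$, i.e.\ $b_j = b_{r-j}$ for all $j$. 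The nontrivial direction is then to show that this numerical symmetry, in the presence of the algebra structure coming from $\overline{G}(I)$, actually forces the Gorenstein (perfect-pairing) property rather than merely the symmetric Hilbert function.

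The step I expect to be the main obstacle is exactly this last implication, from the symmetric Hilbert function (equivalently $(2)'$) to genuine Gorensteinness of $B$. A symmetric Hilbert function is necessary but not sufficient for Gorensteinness of a general graded Artinian algebra, so the argument must use the specific structure of $B$ as a quotient of the normal tangent cone. The natural route is to invoke Matlis duality on the Artinian local ring $A/Q$, as already done in the proof of Lemma \ref{sum-B}(2): the canonical module of $B$ should be identified with a shift of $B$ itself once the equalities $L_n = Q : L_{r+1-n}$ hold, since these say precisely that the annihilator/colon operations respect the grading in the self-dual way. Concretely, I would show that condition $(1)'$ forces the $A/Q$-dual of the filtration $\{L_n\}$ to coincide, up to reindexing by $n \mapsto r+1-n$, with the filtration itself, which is equivalent to $B$ being isomorphic to its own canonical module up to a degree shift of $r$ --- that is, $B$ is Gorenstein with $a$-invariant $a(B)=r$. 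This is where the full force of $(1)'$, and not merely the numerical $(2)'/(3)'$, is needed.
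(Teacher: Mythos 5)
Your reductions are the same as the paper's and are correct: the term-by-term equivalence of $(1)$, $(2)$, $(3)$ for each fixed $n$ comes from Lemma~\ref{L-Ineq} (equality in one of its inequalities forces equality in the others), the half-range/full-range equivalence is exactly the invariance of condition $(3)$ under $n \mapsto r+1-n$, and the implication [$B$ Gorenstein $\Rightarrow (2)'$] via Matlis duality over $A/Q$ and telescoping $b_{n-1}=b_{r+1-n}$ is the paper's argument verbatim. The gap is at the decisive implication $(1)' \Rightarrow B$ Gorenstein. You assert that the self-duality $Q:L_n = L_{r+1-n}$ of the filtration ``is equivalent to $B$ being isomorphic to its own canonical module up to a degree shift''; but that equivalence is precisely the content to be proved, and your outline never says how to pass from a statement about $A/Q$-submodules of $A/Q$ to a $B$-module isomorphism $B \cong \omega_B$. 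The danger you yourself flag --- that degree-wise or numerical symmetry does not imply Gorensteinness --- applies verbatim here: $(1)'$ gives isomorphisms of $A/Q$-modules $D(B_n) \cong (Q:L_{n+1})/(Q:L_n) = L_{r-n}/L_{r+1-n} = B_{r-n}$ in each degree, but Gorensteinness requires these identifications to be compatible with multiplication in $B$, and that compatibility is the actual theorem. (A further slip: your ``perfect pairing $B_n \times B_{r-n} \to B_r \cong k$'' is wrong as stated, since $b_r = b_0 = \ell_A(A/I)$, which is $>1$ in general; the pairing must be valued in $A/Q$, using that the Artinian Gorenstein ring $A/Q$ is its own injective hull.)

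The missing step is what the paper supplies with its socle computation, and it is exactly the ``use of the full force of $(1)'$'' that your sketch postpones: for a homogeneous $x^{*} \in \Soc(B) \cap B_n$ with $0 \le n \le r-1$, the condition $x^{*}B_{r-n}=0$ in $B_r$ means $xL_{r-n} \subseteq Q$, hence $x \in Q:L_{r-n} = L_{n+1}$ by $(1)'$, whence $x^{*}=0$; and $\Soc(B)\cap B_r = (Q:\m)/Q \cong k$, so $\dim_k \Soc(B)=1$. Alternatively, your duality plan can be completed by observing that the canonical isomorphism $\Hom_{A/Q}(L_n/L_{n+1}, A/Q) \cong (Q:L_{n+1})/(Q:L_n)$ is implemented by multiplication, $c \mapsto (x \mapsto cx \bmod Q)$; under $(1)'$ these maps assemble into the $B$-linear homomorphism $B \to \Hom_{A/Q}(B,A/Q) = \omega_B$ sending $c$ to $\phi(c\,\cdot\,)$, where $\phi$ is the projection of $B$ onto $B_r = L_r/Q \subseteq A/Q$, and this homomorphism is bijective in each degree, giving $\omega_B \cong B$ up to shift. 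Either supplement closes the gap, but note that both consume exactly the same input (the colon identities $(1)'$), so your route, once completed, is a repackaging of the paper's socle count rather than a genuinely independent argument.
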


\begin{proof}
The equivalence between $(1)$, $(2)$ and $(3)$ (resp. $(1)'$, $(2)'$ and $(3)'$) follows from Lemma \ref{L-Ineq} (see also \cite[Theorem 4.3]{HKU}).
It suffices to show $(3) \Rightarrow (3)' \Rightarrow (4)  \Rightarrow (2)$. 
  
\par \vspace{2mm} \par \noindent 
$(3) \Longrightarrow (3)':$ 
One can easily check this. 
\par \vspace{2mm} \par \noindent 
$(3)' \Longrightarrow (4):$ 
By Lemma \ref{L-Ineq}, we may assume that 
$L_n=Q:L_{r+1-n}$ for every $n=1,2,\ldots, r$.  
Recall 
\[
B=\bigoplus_{n=0}^r B_n=A/I\oplus I/L_2\oplus L_2/L_3 \oplus 
\cdots \oplus L_r/Q. 
\]
It suffices to show $\dim_k \Soc(B)=1$.  
Let $x^{*}$ be a homogeneous element  in $\Soc(B)$. 
Namely, for $x \in \overline{I^n}$, we denote $x^{*}$ 
the image of $x$ in $B_n$. 
\par \vspace{1mm} \par \noindent
\underline{\bf The case where $x^{*} \in B_0$:}
\par \vspace{1mm}
By assumption, $L_r=Q:I$. 
Since $x^{*} B_r =0$ in $B_r$, we have
$x \in Q \colon L_r$\par\noindent
$=Q \colon (Q:I)=I$. 
Hence $x^{*}=0$ in $B_0$. 
\par \vspace{1mm} \par \noindent
\underline{\bf The case where $x^{*} \in B_n$ 
for some $n$ ($1 \le n \le r-1$):}
\par \vspace{1mm}
Since $x^{*}B_{r-n}=0$ in $B_r$, we have 
$x \in Q \colon  L_{r-n}=L_{n+1}$. 
Hence $x^{*}=0$ in $B_n$. 
\par \vspace{1mm} \par \noindent
\underline{\bf The case where $x^{*} \in B_r$:}
\par \vspace{1mm}
Since $x \in Q:\m$, we have $x^{*} \in \Soc((Q:I)/Q) 
=(Q\colon \m)/Q\cong k$. 
\par \vspace{1mm}\par \noindent
Therefore $\dim_k \Soc(B)=1$ and thus $B$ is Gorenstein. 

\par \vspace{2mm} \par \noindent 
$(4) \Longrightarrow (2):$ 
Since $B$ is Gorenstein, Matlis duality yields 
\[
b_k = b_{r-k} \quad \text{for every $k=0,1,\ldots, \lceil \frac{r}{2} \rceil$}
\]
This implies 
\[
b_0+b_1+\cdots + b_{n-1} = b_r+b_{r-1}+\cdots + b_{r+1-n}
\]
for each $n=1,2, \ldots, \lceil \frac{r}{2} \rceil$, as required. 
\end{proof}

\begin{proof}[Proof of Theorem $\ref{Main}$]
 Let $p=p_g(A)$ and $q_k=q(kI)$ for each $k=1,2,\ldots,r$. 
The equivalence between (1), (2) and (3) follows from Proposition 
\ref{Mainpart}. 
By Proposition \ref{q(nI)formula} and Lemma \ref{sum-B}, we have 
\[
\begin{array}{rcl}
b_2+b_3+\cdots +b_r=& \!\!\!\! \ell_A(\overline{I^2}/QI)  \!\!&= 
(p-q_1) - (q_1-q_2) \\
b_3+\cdots +b_r=& \!\!\!\!  \ell_A(\overline{I^3}/Q\overline{I^{2}}) 
 \!\!&= (q_1-q_2) - (q_2-q_3) \\
  & \vdots & \\
b_{r-1}+b_r=& \!\!\!\!  \ell_A(\overline{I^{r-1}}/Q\overline{I^{r-2}})  
\!\!&= (q_{r-3}-q_{r-2}) - (q_{r-2}-q_{r-1}) \\
b_r=& \!\!\!\!  \ell_A(\overline{I^{r}}/Q\overline{I^{r-1}})  \!\!&= 
 q_{r-2}-q_{r-1}.  \\
\end{array}
\]
\par 
By summing up, we get 
\[
b_2+2b_3+\cdots +(r-2)b_{r-1}+(r-1)b_r=p-q_1 
\]
Thus it follows from Kato's Riemann-Roch formula (Theorem \ref{RRformula}) that 
\begin{equation} \label{Eqbb}
b_2+2b_3+\cdots+(r-2)b_{r-1}+(r-1)b_r = \ell_A(A/I)-\chi(Z). 
\end{equation}
\par \vspace{2mm} \par \noindent 
$(1) \Longrightarrow (4)$:
By assumption, $B$ is Gorenstein. Hence $b_k=b_{r-k}$ for 
every $k=0,1,\ldots, r$. 
By subtracting $b_r=b_0=\ell_A(A/I)$ from both sides 
in Eq. (\ref{Eqbb}), we obtain  
\[
b_2+2b_3+\cdots +(r-2)b_{r-1}+(r-2)b_r= -\chi(Z), 
\]
that is, 
\[
2b_2+4b_3+\cdots +2(r-2)b_{r-1}+2(r-2)b_r= -2 \cdot \chi(Z)=Z^2+K_XZ. 
\]
When $r=2s-1$ (odd), 
\begin{eqnarray*}
\text{(LHS)} &=& \sum_{k=2}^{r-2} 2(k-1)b_k + 2(r-2)b_{r-1}+2(r-2)b_r \\
&=& \sum_{k=2}^{s-1} \left\{2(k-1)b_k+2(r-k-1)b_{r-k} \right\}
+2(r-2)b_{r-1}+2(r-2)b_r \\
&=& (r-2)(b_0+b_1+\cdots+b_r) \\
&=& (r-2)\ell_A(B) =(r-2)e_0(I)=-(r-2)Z^2. 
\end{eqnarray*}
Therefore $-(r-2)Z^2=Z^2+K_XZ$, that is, $(r-1)Z^2 +K_XZ=0$. 
\par \vspace{2mm}
When $r=2s$ (even), 
\begin{eqnarray*}
\text{(LHS)} &=& \sum_{k=2}^{r-2} 2(k-1)b_k + 2(r-2)b_{r-1}+2(r-2)b_r \\
&=& \sum_{k=2}^{s-1} \left\{2(k-1)b_k+2(r-k-1)b_{r-k} \right\}
+2(s-1)b_s+2(r-2)b_{r-1}+2(r-2)b_r \\
&=& (r-2)(b_0+b_1+\cdots+b_r) \\
&=& -(r-2)Z^2. 
\end{eqnarray*}
Similarly as above, we have $(r-1)Z^2 +K_XZ=0$. 
\par \vspace{2mm} \par \noindent 
$(4) \Longrightarrow (1)$: 
Note that $\ell_A(A/I)=b_0$. 
By assumption, $(r-1)Z^2+K_XZ=0$. 
This implies 
\[
\chi(Z)=\dfrac{r-2}{2}Z^2=-\dfrac{r-2}{2}(b_0+b_1+\cdots+b_r). 
\]
By Eq. ($\ref{Eqbb}$), we have 
\begin{equation} \label{Eqbbb}
b_2+2b_3+\cdots+(r-2)b_{r-1}+(r-1)b_r= b_0
+\dfrac{r-2}{2}(b_0+\cdots+b_r). 
\end{equation}
\par 
When $r=2s-1$ (odd), 
\begin{eqnarray*}
\text{(RHS)}-\text{(LHS)}
&=& \dfrac{1}{2} \left\{(b_0+b_1+\cdots +b_{s-1}) 
- (b_s+b_{s+1}+\cdots+b_r)  \right\} \\
& &+ \left\{(b_0+b_1+\cdots+b_{s-2})-(b_{s+1}+\cdots+b_r)\right\} \\
& &+ \cdots \cdots \\
& &+ \left\{(b_0+b_1)-(b_{r-1}+b_r) \right\} \\
& &+\left\{b_0-b_r\right\}. 
\end{eqnarray*}
Since all $\{~ \quad ~\} \ge 0$ 
by Lemma \ref{L-Ineq}, we have all $\{\quad \}$'s 
are equal to zero. 
Namely, 
\[
b_0+b_1+\cdots +b_{n-1} = b_{r+1-n}+\cdots + b_r 
\]
holds for every $n=1,2,\ldots, \lceil \frac{r}{2} \rceil=s$. 
\par \vspace{2mm}
When $r=2s$ (even),  
\begin{eqnarray*}
\text{(RHS)}-\text{(LHS)}
&=& \left\{(b_0+b_1+\cdots +b_{s-1}) 
- (b_{s+1}+b_{s+2}+\cdots+b_r)  \right\} \\
& &+ \left\{(b_0+b_1+\cdots+b_{s-2})-(b_{s+2}+\cdots+b_r)\right\} \\
& &+ \cdots \cdots \\
& &+ \left\{(b_0+b_1)-(b_{r-1}+b_r) \right\} \\
& &+\left\{b_0-b_r\right\}. 
\end{eqnarray*}
\par 
By a similar argument as above, we obtain 
\[
b_0+b_1+\cdots +b_{n-1} = b_{r+1-n}+\cdots + b_r 
\]
holds for every $n=1,2,\ldots, \lceil \frac{r}{2} \rceil=s$. 
\par \vspace{2mm}
By Proposition \ref{Mainpart}, $B$ is Gorenstein and thus 
$\overline{G}(I)$ is Gorenstein. 

\par \vspace{2mm}
In what follows, we suppose that $\overline{G}(I)$ is Gorenstein. 
As seen in the first part of the proof,
\[
\ell_A(A/I)=b_0=b_r
=\ell_A(\overline{I^{r}}/Q \overline{I^{r-1}})
=q_{r-2}-q_{r-1} \le q_{r-2}. 
\]
Since 
$p_g(A)=q_0 > q_1 > \cdots > q_{r-2}$ is a strict decreasing sequence
of integers, we have 
\[
\ell_A(A/I) \le  q_{r-2} \le q_{r-3}-1 \le \cdots \le p_g(A)+2-r,
\]
as required. 
\end{proof}

\par \vspace{2mm}
Using Theorem \ref{Main}, we can prove the following fact:
If $A$ has an ideal $I=I_Z$ for which $\overline{G}(I)$ is Gorenstein 
with $\br(I) =r \ge 2$, then it admits an elliptic ideal $J$ 
for which $\overline{G}(I)$ is Gorenstein. 

\begin{cor} \label{VerNTC}
Let $(A,\m)$ be as in Theorem $\ref{Main}$. 
Assume that $\overline{G}(I)$ is Gorenstein for some $I=I_Z$ with 
$\br(I)=r \ge 2$. 
Put $I_k=\overline{I^k}$ for each integer $k \ge 1$. 
If $k$ is a  divisor of $r-1$, then 
\begin{enumerate}
\item $\br(I_k)=1+\frac{r-1}{k}$. 
\item $\overline{G}(I_k)$ is Gorenstein. 
\end{enumerate}
In particular, $I_{r-1}$ is an elliptic ideal such that 
$\overline{G}(I_{r-1})$ is Gorenstein.  
\end{cor}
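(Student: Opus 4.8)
The plan is to derive both assertions from \thmref{Main} applied to the ideal $I_k=\overline{I^k}$, which on the same resolution $X$ is represented by the anti-nef cycle $kZ$, since $\overline{I^k}=I_{kZ}$. Assertion (1) is immediate from \corref{br(Ik)}: we have $\br(I_k)=\lceil (r-1)/k\rceil+1$, and the hypothesis $k\mid r-1$ makes this equal to $1+(r-1)/k$. Write $r_k=\br(I_k)=1+(r-1)/k$; since $1\le k\le r-1$ we have $r_k\ge 2$, so \thmref{Main} will apply to $I_k$ as soon as $\overline{G}(I_k)$ is known to be Cohen--Macaulay.

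For (2), I would first record that condition (4) of \thmref{Main} transfers from $I$ to $I_k$. As $I_k=I_{kZ}$, condition (4) for $I_k$ reads $(r_k-1)(kZ)^2+K_X(kZ)=0$, and
\[
(r_k-1)(kZ)^2+K_X(kZ)=\frac{r-1}{k}\cdot k^2Z^2+k\,K_XZ=k\bigl((r-1)Z^2+K_XZ\bigr).
\]
Since $\overline{G}(I)$ is Gorenstein, the equivalence of (1) and (4) in \thmref{Main} applied to $I$ gives $(r-1)Z^2+K_XZ=0$, so the right-hand side vanishes and (4) holds for $I_k$. The divisibility hypothesis is precisely what is needed here: for general $k$ one only has $r_k-1=\lceil (r-1)/k\rceil$, and because $Z^2<0$ the quantity $(r_k-1)(kZ)^2+K_X(kZ)=\bigl(\lceil (r-1)/k\rceil\,k-(r-1)\bigr)Z^2$ vanishes exactly when $k\mid r-1$.

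The main obstacle is to show that $\overline{G}(I_k)$ is Cohen--Macaulay, a hypothesis of \thmref{Main} that does not follow formally from condition (4). I would pass to the normalized extended Rees algebra $\overline{\mathcal{R}'}=\bigoplus_{n\in\mathbb{Z}}\overline{I^n}t^n$. Here $\overline{G}(I)\cong\overline{\mathcal{R}'}/t^{-1}\overline{\mathcal{R}'}$ with $t^{-1}$ a non-zero-divisor, and inverting $t^{-1}$ turns $\overline{\mathcal{R}'}$ into $A[t,t^{-1}]$, which is Cohen--Macaulay because $A$ is a two-dimensional normal, hence Cohen--Macaulay, domain; together these force $\overline{\mathcal{R}'}$ to be Cohen--Macaulay once $\overline{G}(I)$ is. The key point is that the $k$-th Veronese subring is
\[
\bigl(\overline{\mathcal{R}'}\bigr)^{(k)}=\bigoplus_{n\in\mathbb{Z}}\overline{I^{kn}}\,t^{kn}=\overline{\mathcal{R}'(I_k)},
\]
using $\overline{I_k^{n}}=\overline{I^{kn}}$. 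Since the Veronese subring of a Cohen--Macaulay graded ring is Cohen--Macaulay, $\overline{\mathcal{R}'(I_k)}$ is Cohen--Macaulay, and dividing by the non-zero-divisor $t^{-k}$ shows that $\overline{G}(I_k)\cong\overline{\mathcal{R}'(I_k)}/t^{-k}\overline{\mathcal{R}'(I_k)}$ is Cohen--Macaulay. I expect the only subtle point to be the bookkeeping with the $\mathbb{Z}$-grading of the extended Rees algebra when invoking the Veronese and regular-element arguments, which are standard but deserve care.

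With $\overline{G}(I_k)$ Cohen--Macaulay and condition (4) verified, \thmref{Main} yields that $\overline{G}(I_k)$ is Gorenstein, which is (2). For the final assertion, take $k=r-1$: then $r_k=\br(I_{r-1})=2$, so $I_{r-1}$ is an elliptic ideal by \defref{def:elliptic}, and $\overline{G}(I_{r-1})$ is Gorenstein by (2).
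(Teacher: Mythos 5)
Your proof is correct and follows essentially the same route as the paper: both pass to the normalized extended Rees algebra, identify $\overline{\mathcal{R}'}(I_k)$ as its $k$-th Veronese subring (a direct summand) to obtain Cohen--Macaulayness of $\overline{G}(I_k)$, and then apply Theorem~\ref{Main} together with Corollary~\ref{br(Ik)}, the cycle condition transferring because $(r_k-1)(kZ)^2+K_X(kZ)=k\bigl((r-1)Z^2+K_XZ\bigr)=0$. The only cosmetic differences are that the paper first deduces Gorensteinness (rather than merely Cohen--Macaulayness) of $\overline{\mathcal{R}'}(I)$ before invoking the summand argument, and leaves the cycle computation implicit, whereas you spell it out.
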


\begin{proof}
Since $\overline{G}(I) \cong \overline{\mathcal{R}'}(I)/t^{-1} \overline{\mathcal{R}'}(I)$ is Gorenstein, 
so is $\overline{\mathcal{R}'}(I)$. 
As $ \overline{\mathcal{R}'}(I_k)$ is 
a direct summand of $ \overline{\mathcal{R}'}(I)$, 
it is Cohen-Macaulay. 
Thus $\overline{G}(I_k)$ is also Cohen-Macaulay and $I_k=I_{kZ}$
and (1) follows from  Corollary \ref{br(Ik)}. 
Also, since $(r-1)Z^2 + K_XZ = 0$, by \ref{Main}, (2) also follows from 
Theorem \ref{Main}.
\end{proof}

\bigskip
\section{Normal Tangent cone for the maximal ideal of a Brieskorn hypersurface}

\par \vspace{2mm}
Let  $2 \le a \le b \le c$ be integers, and let $K$ be an 
algebraically closed field of characteristic $p \ge 0$, which 
$p$ does not divide $abc$ or $p=0$. 
Then 
\[
A=K[x,y,z]_{(x,y,z)}/(x^a+y^b+z^c)
\] 
is called a \textit{Brieskorn hypersurface} of type $(a,b,c)$. 
Put $\m=(x,y,z)A$ and $\widehat{A}$ denotes the $\m$-adic 
completion of $A$: 
$\widehat{A}=K[[x,y,z]]/(x^a+y^b+z^c)$. 
\par \vspace{2mm}
Moreover, we put 
\[
d=\mathrm{gcd}(a,b),  
\quad 
n_k=\lfloor \frac{kb}{a} \rfloor \;\text{for $k=1,2,\ldots,a-1$}. 
\]

\par \vspace{2mm}
In what follows, let $(A,\m)$ be a Brieskorn hypersurface 
of type $(a,b,c)$. 
Then $A$ is an excellent Gorenstein normal local domain of $\dim A=2$,  
and $\m$ is an $\m$-primary integrally closed ideal with minimal 
reduction $Q=(y,z)A$. 
 We assume that $\m$ is represented by a cycle $Z$ on a resolution $X$.

\par \vspace{2mm}
The main aim of this section is to give a complete criterion for Gorensteinness of $\overline{G}(\m)$ as an application of Theorem \ref{Main}. 
In fact, we prove the following theorem. 

\begin{thm} \label{BmaxThm}
Let $A=k[[x,y,z]]/(x^a+y^b+z^c)$ and put $r=n_{a-1}=\lfloor \frac{(a-1)b}{a} \rfloor$ and $d=\gcd(a,b)$. 
Then the following conditions are equivalent$:$
\begin{enumerate}
\item $\overline{G}(\m)$ is Gorenstein. 
\item $\ell_A(A/L_n)+\ell_A(A/L_{r+1-n})=a$ for every$n=1,2,\ldots, r$.  
\item $(r-1)Z^2+K_XZ=0$.  
\item $b \equiv  0$ or $d \pmod{a}$. 
\end{enumerate}
\end{thm}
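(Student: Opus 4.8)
The plan is to derive Theorem \ref{BmaxThm} from Theorem \ref{Main} once the normalized filtration of $\m$ has been made explicit, and then to settle the arithmetic equivalence $(2)\LLR(4)$ by an elementary number-theoretic computation. First I would use the finite free presentation $A=\bigoplus_{i=0}^{a-1}Rx^i$ over the regular local ring $R=k[[y,z]]$, with $Q=(y,z)A$ a minimal reduction of $\m$ (indeed $x^a=-(y^b+z^c)\in Q$, and $\ell_A(A/Q)=\ell(k[x]/(x^a))=a$, so $e_0(\m)=a$). The technical core is the explicit description
\[
\overline{\m^{\,n}}=\bigoplus_{i=0}^{a-1}(y,z)^{\max\{0,\,n-n_i\}}x^i,\qquad n_i=\fl{\tfrac{ib}{a}},
\]
equivalently $\ord(x^iy^jz^k)=n_i+j+k$, where $\ord(g)=\max\{n:g\in\overline{\m^{\,n}}\}$. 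This amounts to identifying the order function with the normalized monomial weighting $w(x)=b$, $w(y)=w(z)=a$ coming from the quasi-homogeneous structure. From this description I would read off the Valabrega--Valla equalities $Q\cap\overline{\m^{\,n}}=Q\overline{\m^{\,n-1}}$, hence $\overline{G}(\m)$ is Cohen--Macaulay by \thmref{Ideal-GCM}, and, by locating the top nonvanishing graded piece of $\overline{G}(\m)/(y^{*},z^{*})$, that $\br(\m)=n_{a-1}=r$.

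With these inputs, the equivalences among $(1)$, $(2)$, $(3)$ are formal: since $e_0(\m)=a$ and $\overline{G}(\m)$ is Cohen--Macaulay, conditions $(1)$, $(2)$, $(3)$ of \thmref{BmaxThm} are equivalent (via \proref{Mainpart} for the range of $n$) to conditions $(1)$, $(3)$, $(4)$ of \thmref{Main}; the degenerate case $r=1$, where $\m$ is a $p_g$-ideal, is instead covered by \thmref{pgCM} and gives the same conclusion. So the only genuine content is $(2)\LLR(4)$. For this I would first compute, for $1\le n\le r$,
\[
\ell_A(A/L_n)=\min\{\ce{\tfrac{na}{b}},\,a\}=\ce{\tfrac{na}{b}},
\]
the last equality holding because $\ce{na/b}\le a-1$ for $n\le r$. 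Thus $(2)$ becomes the closed statement $\ce{na/b}+\ce{(r+1-n)a/b}=a$ for all $n=1,\dots,r$.

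The heart of the matter is the analysis of this identity. Writing $j_k=(-ka)\bmod b\in[0,b-1]$, one has $\ce{ka/b}=(ka+j_k)/b$, so for each $n$ the required equality is equivalent to $j_n+j_{r+1-n}=a(b-r-1)$; since $j_n+j_{r+1-n}\equiv-(r+1)a\pmod b$ lies in $[0,2b-2]$ and $a(b-r-1)$ is its unique representative in $[0,b)$, this is the same as $j_n+j_{r+1-n}<b$. Now put $d=\gcd(a,b)$, $a=da'$, $b=db'$ with $\gcd(a',b')=1$, so that $j_k=d\cdot\bigl((-ka')\bmod b'\bigr)$ and the condition descends to $\ell'_n+\ell'_{r+1-n}<b'$, where $\ell'_k=(-ka')\bmod b'$. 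Using $r=db'-\ce{b'/a'}$ one checks $(r+1)a'\equiv\beta\pmod{b'}$ with $\beta=b'\bmod a'$, and then a short case split on $c'_n:=na'\bmod b'$ shows the inequality \emph{fails} at $n$ exactly when $1\le c'_n\le\beta-1$. Hence $(2)$ holds if and only if no index $n\in[1,r]$ realizes a residue in $[1,\beta-1]$. If $a'=1$ or $\beta=1$ this forbidden range is empty and $(2)$ holds; if $\beta\ge2$ it fails, since for $d\ge2$ a full period $[1,b']\subseteq[1,r]$ already realizes the residue $1$, while for $d=1$ the excluded indices $n\in[r+1,b'-1]$ carry only residues $\ge\beta$, so every residue in $[1,\beta-1]$ is realized within $[1,r]$. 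Therefore $(2)$ holds iff $\beta\in\{0,1\}$, i.e. $b'\equiv0$ or $1\pmod{a'}$, i.e. $b\equiv0$ or $d\pmod a$, which is $(4)$.

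The main obstacle I expect is the first step: rigorously establishing the monomial form of $\overline{\m^{\,n}}$ (equivalently $\ord(x^i)=n_i$) together with the Cohen--Macaulayness of $\overline{G}(\m)$, because the naive monomial weighting is not literally a valuation on $A$---the relation $x^a+y^b+z^c=0$ forces cancellation in top weight, so the order function must be controlled through the resolution data rather than read off term by term. Once the filtration is pinned down, the passage through \thmref{Main} is automatic and the equivalence $(2)\LLR(4)$ reduces to the floor/ceiling computation above, whose only delicate point is exhibiting a failing index inside the window $1\le n\le r$ precisely when $\gcd(a,b)=d$ and $b\not\equiv0,d\pmod a$.
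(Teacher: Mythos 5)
Your proposal is correct, but it reaches condition $(4)$ by a genuinely different route than the paper. The shared skeleton is the same: both arguments take as input the explicit description $\overline{\m^n}=Q^n+xQ^{n-n_1}+\cdots+x^{a-1}Q^{n-n_{a-1}}$, the Cohen--Macaulayness of $\overline{G}(\m)$, and $\br(\m)=n_{a-1}$ (the paper simply cites these from \cite{OWY4} as Lemma \ref{BmaxCM}, so the ``main obstacle'' you flag is not actually re-proved there either), and both get the equivalence of $(1)$, $(2)$, $(3)$ from Theorem \ref{Main}; your formula $\ell_A(A/L_n)=\lceil na/b\rceil$ is exactly the paper's Corollary \ref{Lncal} in closed form. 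The divergence is in linking these to $(4)$. The paper proves $(3)\Rightarrow(4)$ by summing the lengths $\ell_A(\overline{\m^{k+1}}/Q\overline{\m^k})=a-\lceil a(k+1)/b\rceil$, comparing with $p_g(A)-q(\m)=1-\chi(Z)$ via Kato's Riemann--Roch formula, and substituting the intersection data $Z^2=-a$, $K_XZ=d+ab-b-2a$; it then proves $(4)\Rightarrow(1)$ by writing down, in three cases ($a\mid b$; $d=1$, $b\equiv 1$; $1<d<a$), an explicit presentation of $\overline{\mathcal{R}'}(\m)$ showing that $\overline{G}(\m)$ is a hypersurface or a codimension-two complete intersection. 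You instead prove $(2)\Leftrightarrow(4)$ by pure floor/ceiling arithmetic on the identity $\lceil na/b\rceil+\lceil(r+1-n)a/b\rceil=a$, reducing mod $b'=b/d$ and locating the failing indices $n$ with $na'\bmod b'\in[1,\beta-1]$, $\beta=b'\bmod a'$; I checked the key claims (the reduction to $j_n+j_{r+1-n}<b$, the characterization of failing residues, and the realization of a forbidden residue inside $[1,r]$ when $\beta\ge 2$, in both the $d\ge 2$ and $d=1$ cases) and they hold. What each approach buys: the paper's computation yields the stronger structural conclusion that $\overline{G}(\m)$ is a complete intersection (reused in its examples), at the cost of invoking resolution and intersection-theoretic data; your argument is more elementary and uniform --- once Lemma \ref{BmaxCM} and Theorem \ref{Main} are granted, it never touches the resolution or Riemann--Roch --- but it establishes only Gorensteinness. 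A further small point in your favor: you note that the degenerate case $r=1$ (which occurs only for $(a,b)=(2,2),(2,3)$) lies outside the hypotheses of Theorem \ref{Main} and must be handled by Theorem \ref{pgCM}, a case the paper's citation silently passes over.
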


\par \vspace{2mm}
In order to prove Theorem \ref{BmaxThm}, we recall the following result. 

\begin{lem}[\textrm{cf. \cite{OWY4}}] \label{BmaxCM}
Let $(A,\m)$ be as in Theorem $\ref{BmaxThm}$. Then  
\begin{enumerate}
\item $\overline{\m^n}=Q^n + xQ^{n-n_1}+x^2 Q^{n-n_2}+\cdots +x^{a-1}Q^{n-n_{a-1}}$. 
\item $\overline{G}(\m)$ is Cohen-Macaulay and $e_0(\overline{G}(\m))=e_0(\m)=a$. 
\item $\br(\m)=\nr(\m)=n_{a-1}$.  
\end{enumerate}
\par \vspace{1mm}
In particular, if $b=an_1+\delta$, $1 \le \delta \le a-1$, then 
$ r:=\br(\m)  =(a-1)n_1+\delta-1 = b-n_1-1$. 
\end{lem}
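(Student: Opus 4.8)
The plan is to reduce all of (1)--(3) and the closing arithmetic to the explicit module-theoretic formula in (1), and to prove (1) by comparing the $\m$-adic and integral-closure filtrations inside the free $K[[y,z]]$-module structure of $A$. Since $x^a=-(y^b+z^c)$, the ring $A$ is free over $R:=K[[y,z]]$ with basis $1,x,\dots,x^{a-1}$, so each element is uniquely $f=\sum_{i=0}^{a-1}x^ig_i$ with $g_i\in R$, and $Q^m=(y,z)^mA=\bigoplus_{i=0}^{a-1}x^i(y,z)^m$. The same relation gives $x^a\in(y,z)^a=Q^a$, so $x$ is integral over $Q$ and $Q=(y,z)A$ is a minimal reduction of $\m$. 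Writing $R^{(m)}:=(y,z)^m$ for $m\ge1$ and $R^{(m)}:=R$ for $m\le0$, I set $J_n:=\bigoplus_{i=0}^{a-1}x^iR^{(n-n_i)}$, the right-hand side of (1); the goal is $J_n=\overline{\m^n}$.

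First I would dispatch the two easy inclusions. From $x^{ia}=(-(y^b+z^c))^i\in(y,z)^{ib}\subseteq\m^{ib}\subseteq\m^{an_i}$ and the standard criterion $w^a\in\m^{am}\Rightarrow w\in\overline{\m^m}$, one gets $x^i\in\overline{\m^{n_i}}$; multiplying by $Q^{n-n_i}\subseteq\overline{\m^{n-n_i}}$ and using $\overline{\m^s}\,\overline{\m^t}\subseteq\overline{\m^{s+t}}$ gives $J_n\subseteq\overline{\m^n}$. Conversely, reducing a monomial $x^iy^jz^k$ with $i+j+k=n$ modulo $x^a$ and tracking $(y,z)$-orders (using $n_{i'}\ge i'$ since $b\ge a$) shows $\m^n\subseteq J_n$, hence $\overline{\m^n}\subseteq\overline{J_n}$. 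The crux is then to prove that $J_n$ is integrally closed, for then $\overline{\m^n}\subseteq\overline{J_n}=J_n\subseteq\overline{\m^n}$ forces equality. The main obstacle is that the naive weighted order $x^ig\mapsto n_i+\ord_{(y,z)}(g)$ is only superadditive---because $n_{i+i'}\ge n_i+n_{i'}$ with strict inequality possible once $\gcd(a,b)>1$---so its associated graded ring is not a domain and a single-valuation argument does not suffice. I would instead obtain the reverse inclusion geometrically, computing $\overline{\m^n}=H^0(X,\mathcal{O}_X(-nZ))$ on the (weighted) resolution of the Brieskorn singularity, whose exceptional cycle $Z$ is explicitly known; equivalently one identifies $J_n$ with the contraction of a monomial ideal in the toric associated graded ring, where integral closure is governed by the Newton polyhedron. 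This valuative/toric computation is the delicate heart of the argument.

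Granting (1), assertion (2) follows from the Valabrega--Valla criterion \thmref{Ideal-GCM}(1). Since both $Q=\bigoplus_i x^i(y,z)$ and $\overline{\m^n}=\bigoplus_i x^iR^{(n-n_i)}$ are homogeneous $R$-submodules, intersections and products are computed componentwise: for every $n\ge2$ and every $i$, both $Q\cap\overline{\m^n}$ and $Q\overline{\m^{n-1}}$ have $i$-th component equal to $(y,z)^{n-n_i}$ when $n-n_i\ge1$ and to $(y,z)$ otherwise. Hence $Q\cap\overline{\m^n}=Q\overline{\m^{n-1}}$ for all $n\ge2$, and $\overline{G}(\m)$ is Cohen--Macaulay. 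Finally $A/(y,z)A\cong K[[x]]/(x^a)$ has length $a$, and since $Q$ is a parameter-ideal reduction of $\m$ in the Cohen--Macaulay ring $A$ we get $e_0(\m)=e_0(Q)=\ell_A(A/Q)=a$, which equals $e_0(\overline{G}(\m))$ because $\overline{G}(\m)$ is Cohen--Macaulay.

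For (3), the same componentwise comparison shows that $\overline{\m^{n+1}}=Q\overline{\m^n}$ holds precisely when no $i\in\{1,\dots,a-1\}$ satisfies $n_i>n$; since $n_1\le\cdots\le n_{a-1}$, this is equivalent to $n\ge n_{a-1}$. Thus equality first occurs at $n=n_{a-1}$ and then persists, giving $\nr(\m)=\br(\m)=n_{a-1}$, in agreement with \thmref{Ideal-GCM}(3). For the closing formula, writing $b=an_1+\delta$ with $1\le\delta\le a-1$ gives $\lceil b/a\rceil=n_1+1$, whence $n_{a-1}=\lfloor(a-1)b/a\rfloor=b-\lceil b/a\rceil=b-n_1-1=(a-1)n_1+\delta-1$, as asserted.
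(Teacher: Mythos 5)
Your overall architecture is sound where it is actually executed: the freeness of $\widehat{A}$ over $R=K[[y,z]]$ with basis $1,x,\dots,x^{a-1}$, the easy inclusion $J_n\subseteq\overline{\m^n}$ via $x^{ia}\in (y,z)^{ib}\subseteq \m^{an_i}$ and the valuative criterion, the inclusion $\m^n\subseteq J_n$, the componentwise identification of $Q\cap\overline{\m^n}$ and $Q\overline{\m^{n-1}}$ (both with $i$-th component $(y,z)^{\max(1,\,n-n_i)}$) feeding into Valabrega--Valla (\thmref{Ideal-GCM}), the computation $e_0(\m)=\ell_A(A/Q)=a$, the observation that $\overline{\m^{n+1}}=Q\overline{\m^n}$ holds exactly when $n\ge n_{a-1}$, and the floor arithmetic $n_{a-1}=b-\lceil b/a\rceil=b-n_1-1$ are all correct. (For the record, the paper itself gives no proof of this lemma --- it quotes it from \cite{OWY4} --- so the comparison is with that reference, where the formula in (1) is established by the resolution/valuation computation that you defer.)

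The genuine gap is that part (1), on which everything else in your argument is conditional, is never proved. You correctly reduce it to the assertion that $J_n=\bigoplus_i x^iR^{(n-n_i)}$ is integrally closed, correctly note why a single weight-order valuation cannot do the job (though strict superadditivity $n_{i+j}>n_i+n_j$ already occurs whenever $a\nmid b$, e.g.\ $a=3$, $b=5$ gives $n_2=3>2n_1$, not only when $\gcd(a,b)>1$), and then merely name two strategies --- computing $H^0(X,\cO_X(-nZ))$ on the explicit resolution, or a Newton-polyhedron argument in a ``toric associated graded ring'' --- without carrying out either. Neither is routine: $A$ is a trinomial, not a binomial, hypersurface, so no toric model is available off the shelf, and the resolution route requires knowing the multiplicities of $\di_X(x),\di_X(y),\di_X(z)$ along every exceptional component of the Brieskorn resolution and verifying $H^0(X,\cO_X(-nZ))=J_n$ for all $n$, which is precisely the content of \cite{OWY4}. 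A way to close the gap within your own framework: your filtration is multiplicative (for $i+j\ge a$ one has the exact identity $n_{i+j-a}+b=n_{i+j}\ge n_i+n_j$), so $\mathcal{R}=\bigoplus_n J_nt^n$ is a ring, integral over the Rees algebra $A[\m t]$ because $x^i\in\overline{\m^{n_i}}$, with the same fraction field; if you then prove $\mathcal{R}$ is \emph{normal} --- e.g.\ by Serre's criterion applied to explicit presentations of the type displayed in the proof of \thmref{BmaxThm}, extended to arbitrary $b$ --- you get $\mathcal{R}=\overline{A[\m t]}$ and hence $J_n=\overline{\m^n}$. As written, however, the delicate heart you yourself identify is missing, and the proof is incomplete.
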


\begin{cor} \label{Lncal}
For an integer with $1 \le n \le r$, we have 
\[
\ell_A(A/L_n)=\min\{k \in \bbZ_{+} \,|\, n \le n_k \}.
\]
We put $\ell_n=\ell_A(A/L_n)$ for $n=1,2,\ldots,r$. 
\end{cor}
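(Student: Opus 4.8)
The plan is to reduce the whole computation to the one-dimensional Artinian quotient $A/Q$. First I would observe that since $Q=(y,z)A$ and $x^a=-(y^b+z^c)\in Q$, we have
\[
A/Q \cong K[[x]]/(x^a),
\]
a local Artinian ring of length $a$ with $K$-basis $1,x,\dots,x^{a-1}$; in particular $\ell_A(A/Q)=a$, which is consistent with $e_0(\m)=a$ from Lemma~\ref{BmaxCM}. Because of the inclusions $Q\subseteq L_n\subseteq A$, the length I want splits as
\[
\ell_A(A/L_n)=\ell_A(A/Q)-\ell_A(L_n/Q)=a-\ell_A(L_n/Q),
\]
so it suffices to compute the length of the image of $\overline{\m^n}$ in $A/Q$.

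Next I would feed the explicit description from Lemma~\ref{BmaxCM}(1),
\[
\overline{\m^n}=Q^n+xQ^{n-n_1}+\cdots+x^{a-1}Q^{n-n_{a-1}},
\]
into this quotient, reading $Q^m=A$ for $m\le 0$. Reducing modulo $Q$, every term $x^kQ^{n-n_k}$ with $n-n_k\ge 1$ lands inside $Q$ and so contributes nothing, and the term $Q^n$ (the case $k=0$) also dies since $n\ge 1$. The surviving terms are exactly the $x^k$ with $1\le k\le a-1$ and $n\le n_k$, each contributing the principal ideal $(x^k)$ of $K[[x]]/(x^a)$.

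Finally I would use that $n_k=\lfloor kb/a\rfloor$ is strictly increasing in $k$, since $n_{k+1}-n_k\ge\lfloor b/a\rfloor\ge 1$ as $b\ge a$. Hence $\{k : n\le n_k\}$ is an \emph{upper} interval $\{k_0,k_0+1,\dots,a-1\}$, where $k_0=\min\{k : n\le n_k\}$; this set is nonempty precisely because $n\le r=n_{a-1}$, and $k_0\ge 1$ because $n\ge 1>0=n_0$. Inside $K[[x]]/(x^a)$ the sum $(x^{k_0})+\cdots+(x^{a-1})$ collapses to the single principal ideal $(x^{k_0})$, of length $a-k_0$. Therefore $\ell_A(L_n/Q)=a-k_0$ and
\[
\ell_A(A/L_n)=a-(a-k_0)=k_0=\min\{k\in\bbZ_+ : n\le n_k\},
\]
as claimed. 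I expect no serious obstacle here: the only points that demand care are fixing the convention $Q^m=A$ for $m\le 0$ so that the formula of Lemma~\ref{BmaxCM}(1) is interpreted correctly, and invoking the strict monotonicity of the $n_k$ so that the image of $\overline{\m^n}$ in $A/Q$ is recognized as a single principal ideal rather than a more complicated submodule.
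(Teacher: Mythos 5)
Your proof is correct and follows essentially the same route as the paper: both reduce modulo $Q$ using the explicit description of $\overline{\m^n}$ in Lemma~\ref{BmaxCM}(1), identify $L_n=Q+(x^{k_0})$ with $k_0=\min\{k : n\le n_k\}$, and read off the length inside $K[x]/(x^a)$. The only cosmetic difference is that you compute $\ell_A(A/L_n)$ as $\ell_A(A/Q)-\ell_A(L_n/Q)$, whereas the paper quotients directly to get $A/L_n\cong K[x]/(x^{k_0})$.
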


\begin{proof}
Suppose  $n_{k-1} < n  \le n_k$. 
Then Lemma \ref{BmaxCM}(1) yields $L_n=Q+(x^k)$. 
Thus $\ell_A(A/L_n)=\ell_A(K[x]/(x^k))=k$. 
\end{proof}

\begin{proof}[Proof of Theorem $\ref{BmaxThm}$]
The equivalence of $(1)$, $(2)$ and $(3)$ follows from 
Theorem $\ref{Main}$. 
\par \vspace{2mm} \par \noindent 
$(3) \Longrightarrow (4):$
By \cite[Proposition 3.8(1)]{OWY4}, we have 
\[
\ell_A(\overline{\m^{k+1}}/Q\overline{\m^k}) 
= a - \left\lceil \dfrac{a(k+1)}{b} \right\rceil 
\]
for $k=0,1,\ldots,r$. 
Then 
\[
\sum_{k=1}^r \ell_A(\overline{\m^{k+1}}/Q\overline{\m^k}) 
= ar - \sum_{k=1}^r \left\lceil \dfrac{a(k+1)}{b} \right\rceil.  
\]
One can easily see that 
\[
\text{(RHS)}=\dfrac{d+b(a-1)-3a+2}{2}. 
\]
On the other hand, 
 the argument of the proof of \thmref{Main} and 
Kato's Riemann-Roch formula yields 
\[
\text{(LHS)}=p_g(A)-q(\m)=1-\chi(Z)=1+\dfrac{Z^2+KZ}{2}. 
\]
Since $Z^2=-a$, we have $KZ=d+ab-b-2a$. 
\par \vspace{1mm}
Hence the assumption $(3)$ means 
\[
(r-1)(-a)+d+ab-b-2a=0. 
\]
Then $b \equiv d \pmod{a}$. 
\par \vspace{2mm} \par \noindent 
$(4) \Longrightarrow (3):$
\par
First suppose $d=a$, that is, $a$ is a divisor of $b$. 
Then $n_k=kb/a$ for every $k=1,2,\ldots,a-1$ and 
$r=n_{a-1}=(a-1)n_1$. 
Then $\overline{G}(\m)$ is a hypersurface, and thus Gorenstein. In fact, 
\begin{eqnarray*}
\overline{\mathcal{R}'}(\m) 
& \cong &  K[X,Y,Z,U]/(X^a+Y^b+Z^cU^{c-b}) \\[2mm]
&  & \text{where $X=xt^{n_1}$, $Y=yt$, $Z=zt$ and $U=t^{-1}$, and}  \\[2mm] 
\overline{G}(\m) & \cong &
\left\{ 
\begin{array}{ll}
K[X,Y,Z]/(X^a+Y^b+Z^b) & \;\text{$b=c$;} \\
K[X,Y,Z]/(X^a+Y^b) & \;\text{$b<c$}. 
\end{array}
\right.
\end{eqnarray*}

\par \vspace{2mm}
Next we consider the case of $d=1$ and $b \equiv 1 \pmod{a}$. 
For every $k$ with $1 \le k \le a-1$, we have 
\[
n_k
=\lfloor \frac{kb}{a} \rfloor 
= \lfloor \frac{k(n_1a+1)}{a} \rfloor
=kn_1+ \lfloor \dfrac{k}{a} \rfloor = kn_1. 
\]
If we put $Y=yt$, $Z=zt$, $X=xt^{n_1}$ and $U=t^{-1}$,  
then since 
\[
X^a=(xt^{n_1})^a=(-y^b-z^c)t^{n_1a} 
=-y^bt^{b-1}-z^c t^{b-1} 
= -Y^bU-Z^cU^{c-b+1} 
\]
we have 
\begin{eqnarray*}
\overline{\mathcal{R}'}(\m) &=& 
 K[xt, yt, zt, \{x^kt^{n_k}\}_{2 \le k \le a-1}, t^{-1}] \\
&=&  K[yt,zt,xt^{n_1},t^{-1}] \\
& \cong &  K[X,Y,Z,U]/(X^a+Y^bU+Z^cU^{c-b+1}). 
\end{eqnarray*}
Thus $\overline{G}(\m) \cong  K[X,Y,Z]/(X^a)$ is 
a hypersurface, and thus Gorenstein. 
\par \vspace{2mm} 
Finally, we consider the case of $a>  d=\gcd(a,b)>1$ and 
$b \equiv d \pmod{a}$. 
Put $a'=a/d$ and $b'=b/d$. Then $b'=n_1a'+1$.  
One can easily see that $n_k=kn_1$ for $1 \le k \le a'-1$, 
and $n_{a'}=b'$. 
For any integer $p$ $(1\le p \le d-1)$, if $pa'+1 \le k \le pa'+a'-1$, 
then 
\begin{eqnarray*}
n_{k} &=& \lfloor \frac{k(n_1a'+1)}{a'} \rfloor 
= kn_1+ \lfloor \frac{k}{a'} \rfloor = kn_1+ \lfloor \frac{pa'+(k-pa')}{a'} \rfloor \\[1mm]
&=& kn_1+p = pb'+(k-pa')n_1. 
\end{eqnarray*}
Hence $(k,n_k)=p(a',b')+(k-pa')(1,n_1)$. 
Similarly, if $k=pa'$ $(2 \le p \le d-1)$, then $(k,n_k)=p(a',b')$. 
Therefore
\begin{eqnarray*}
\overline{\mathcal{R}'}(\m) 
&=& K[xt,yt,zt,xt^{n_1},x^{a'}t^{b'},t^{-1}] \\[1mm]
& \cong & K[X,Y,Z,W,U]/(X^{a'}-WU, W^d+Y^b+Z^cU^{c-b}),
\end{eqnarray*} 
where $X=xt^{n_1}$, $Y=yt$, $Z=zt$, $W=x^{a'}t^{b'}$, and $U=t^{-1}$. 
Indeed, 
\[
W^d=(x^{a'}t^{b'})^d = x^at^b=-(y^b+z^c)t^b=-Y^b-Z^cU^{c-b}, 
\]
and 
\[
 X^{a'}=x^{a'}t^{n_1a'}=x^{a'}t^{b'-1}=(x^{a'}t^{b'})(t^{-1})=WU.
\]
Moreover, 
\begin{eqnarray*}
\overline{G}(\m) 
& \cong &
\left\{
\begin{array}{ll}
K[X,Y,Z,W]/(X^{a'}, W^d+Y^b+Z^c), &\, \text{if $b=c$}; \\[1mm]  
K[X,Y,Z,W]/(X^{a'}, W^d+Y^b), & \, \text{if $b<c$}. 
\end{array}
\right.
\end{eqnarray*}
Thus $\overline{G}(\m)$ is a complete intersection, and thus Gorenstein. 
\end{proof}

\begin{cor} \label{cor: Bries-ex}
We use the same notation as in Theorem $\ref{BmaxThm}$. 
\begin{enumerate}
\item $\br(\m) \ge a-1=r(\m)$.  
\item If both $r$ and $a$ are odd, 
then  $\overline{G}(\m)$ is not Gorenstein. 
\item Let $s \ge 1$ be a positive integer. 
If $(a,b,c)=(3,3s,3s)$, then  
$\overline{G}(\m)$ is Gorenstein and $\br(\m)=2s$.  
\item If $a=4$ and $\overline{G}(\m)$ is Gorenstein, then 
$\br(\m) \equiv 0,1 \pmod{3}$. 
\end{enumerate}
\end{cor}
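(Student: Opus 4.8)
The plan is to handle the four assertions separately: part (1) requires a genuine computation of the ordinary reduction number, while (2)--(4) are combinatorial consequences of \thmref{BmaxThm} together with the palindromic symmetry of the numbers $b_k$ coming from \proref{Mainpart}. For (1), the first task is to identify the ordinary tangent cone. Writing $A=R/(f)$ with $R=k[[x,y,z]]$ regular and $f=x^a+y^b+z^c$, one has $G(\m)\cong k[X,Y,Z]/(f^{*})$, where $f^{*}$ is the leading form of $f$; since $a\le b\le c$ this $f^{*}$ is homogeneous of degree $a$ (it equals $X^a$, $X^a+Y^a$, or $X^a+Y^a+Z^a$ according as $a<b$, $a=b<c$, or $a=b=c$). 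Thus $G(\m)$ is a Cohen--Macaulay hypersurface with $a$-invariant $a-3$. Taking $Q=(y,z)$, whose images form a linear regular sequence on $G(\m)$, I would compute $G(\m)/(Y,Z)G(\m)\cong k[X]/(X^a)$ in every case, so that the top nonvanishing degree, hence $r_Q(\m)$, equals $a-1$; as $G(\m)$ is Cohen--Macaulay this reduction number is independent of the minimal reduction and $r(\m)=a-1$. Finally $\br(\m)=n_{a-1}=\lfloor (a-1)b/a\rfloor\ge\lfloor (a-1)a/a\rfloor=a-1$ because $b\ge a$, which gives $\br(\m)\ge a-1=r(\m)$.

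For (2), I would combine $e_0(\m)=a$ (\lemref{BmaxCM}) with $\sum_{k=0}^{r}b_k=e_0(\m)$ (\lemref{B-lem}). If $\overline{G}(\m)$ were Gorenstein, then $B$ is Gorenstein and Matlis duality forces the palindrome $b_k=b_{r-k}$, exactly as in the proof of \proref{Mainpart}. When $r$ is odd the index set $\{0,1,\dots,r\}$ has an even number of elements pairing as $(k,r-k)$ with no fixed index, so $a=\sum_{k=0}^{r}b_k=2\sum_{k=0}^{(r-1)/2}b_k$ is even, contradicting the assumption that $a$ is odd. Hence $\overline{G}(\m)$ cannot be Gorenstein when $r$ and $a$ are both odd.

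Statements (3) and (4) are direct applications of the equivalence (1)$\Leftrightarrow$(4) of \thmref{BmaxThm}, that $\overline{G}(\m)$ is Gorenstein iff $b\equiv 0$ or $d\pmod a$. For (3), with $(a,b,c)=(3,3s,3s)$ one has $d=\gcd(3,3s)=3\equiv 0$ and $b=3s\equiv 0\pmod 3$, so the criterion applies and $\overline{G}(\m)$ is Gorenstein; moreover $\br(\m)=n_{2}=\lfloor 6s/3\rfloor=2s$. For (4), with $a=4$ the value $d=\gcd(4,b)$ lies in $\{1,2,4\}$, and the criterion shows $\overline{G}(\m)$ is Gorenstein precisely when $b\not\equiv 3\pmod 4$. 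I would then run through the residues of $b$ modulo $4$, computing $r=n_3=\lfloor 3b/4\rfloor$ in each: $b\equiv 0,1,2,3\pmod 4$ give $r\equiv 0,0,1,2\pmod 3$ respectively. Discarding the non-Gorenstein case $b\equiv 3$, the remaining Gorenstein cases yield $r\equiv 0$ or $1\pmod 3$, as claimed.

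The only step with real content is the reduction-number computation in (1); once $G(\m)$ is recognized as a degree-$a$ Cohen--Macaulay hypersurface, the value $r(\m)=a-1$ follows from standard facts about reduction numbers of Cohen--Macaulay tangent cones, and everything else reduces to bookkeeping with the floor values $n_k=\lfloor kb/a\rfloor$ and the symmetry $b_k=b_{r-k}$.
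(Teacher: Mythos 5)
Your proposal is correct, and its logical skeleton matches the paper's; the interesting divergence is in part (2). Parts (3) and (4) are argued exactly as in the paper: plug the residue class of $b$ modulo $a$ into the criterion ``$b\equiv 0$ or $d \pmod a$'' of \thmref{BmaxThm} and compute $r=n_{a-1}$ case by case (for $a=4$: $b=4m,4m+1,4m+2$ give $r=3m,3m,3m+1$). For part (1) the paper records only the inequality $\br(\m)=\lfloor (a-1)b/a\rfloor\ge a-1$ and takes $r(\m)=a-1$ as known; your computation --- $G(\m)\cong k[X,Y,Z]/(f^{*})$ with $\deg f^{*}=a$, $G(\m)/(Y,Z)G(\m)\cong k[X]/(X^a)$, hence $r_Q(\m)=a-1$, independent of $Q$ by Cohen--Macaulayness of $G(\m)$ --- supplies the justification the paper omits, which is a legitimate (if standard) addition. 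For part (2) you take a genuinely different route. The paper argues geometrically: Gorensteinness forces $(r-1)Z^2+K_XZ=0$ by \thmref{Main}, hence $-\chi(Z)=(r-2)a/2$ using $Z^2=-a$, and the integrality of $\chi(Z)$ fails when $r$ and $a$ are both odd. You argue algebraically: Gorensteinness of the Artinian reduction $B$ forces the palindrome $b_k=b_{r-k}$, and for odd $r$ the pairing $k\leftrightarrow r-k$ on $\{0,1,\dots,r\}$ is fixed-point free, so $a=\sum_{k=0}^{r}b_k$ is even, contradicting $a$ odd. Both are short parity arguments of the same strength; yours bypasses the cycle $Z$ and the Riemann--Roch input entirely, at the mild cost of needing the Cohen--Macaulayness of $\ol{G}(\m)$ (\lemref{BmaxCM}) to form $B$ and to place its socle in degree $r$ --- a hypothesis you use implicitly and should cite explicitly.
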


\begin{proof}
(1) It follows from $r = \lfloor \frac{(a-1)b}{a} \rfloor \ge a-1$. 
 \par \vspace{2mm} \par \noindent 
(2) Suppose that $\ol G(\m)$ is Gorenstein. 
Then $(r-1)Z^2 + K_XZ=0$. This implies $\frac{(r-2)a}{2}=-\chi(Z)$ is an integer. 
\par \vspace{2mm} \par \noindent 
(3)  It follows from Theorem \ref{BmaxThm}. 
\par \vspace{2mm} \par \noindent 
(4) It follows from Theorem \ref{BmaxThm} that $b=4m$, $4m+1$, or 
$4m+2$. 
Then $r=3m$. $3m$, $3m+1$, respectively. 
\end{proof}

\begin{exam}
Let $A=K[x,y,z]/(x^3+y^5+z^5)$, where $K$ is an algebraically closed field of characteristic $p \ne 3,5$. Then 
\begin{enumerate}
\item $a=3$, $b=5$, $n_1=1$ and $r=n_2=3$. 
\item $\overline{\m^2}=\m^2$, $\overline{\m^3}=\m^3+(x^2)$, 
and $\overline{\m^4}=(y,z)\overline{\m^3}$. 
\item If we put $X=xt$, $Y=yt$, $Z=zt$ and $W=x^2t^3$, and $U=t^{-1}$, 
then 
\begin{eqnarray*}
\overline{\mathcal{R}'}(\m) \!\!\!\!
&=& \!\!\!\! K[xt,yt,zt,x^2t^3,t^{-1}] \\[1mm]
&\cong &  \!\!\!\! 
K[X,Y,Z,W,U]/(X^2-WU, XW+Y^5U+Z^5U, W^2+XY^3+XZ^5). 
\end{eqnarray*}
In particular, 
\[
\overline{G}(\m) 
\cong K[X,Y,Z,W,U]/(X^2, XW, W^2+XY^5+XZ^5). 
\]
\end{enumerate} 
\end{exam}

\begin{exam} [cf. {\cite{OWY5}}]  
 Let $a \ge 2$ be an integer, and 
let $A=\mathbb{C}[[x,y,z]]/(x^a+y^a+z^a)$ be a Brieskorn hypersurface. 
Put $\m=(x,y,z)A$.  
\begin{enumerate}
\item $\overline{G}(\m)$ is Gorenstein and $\br(\m)=a-1$. 
\item If $a=2$, then $A$ is a rational singularity. 
Thus $\m$ is a good $p_g$-ideal and $\overline{G}(\m)$ is Gorenstein. 
Moreover, $A$ does not have any elliptic ideal. 
\item If $a=3$, then $A$ is an elliptic singularity and 
$\m$ is an elliptic ideal such that $\overline{G}(\m)$ is Gorenstein. 
\item If $a \ge 4$, then $\br(\m)=a-1 \ge 3$ and thus 
$\m$ is neither a $p_g$-ideal nor an elliptic ideal but 
$\overline{G}(\m)$ is Gorenstein.  
On the other hand, $\m^{a-1}$ is an elliptic ideal such that 
$\overline{G}(\m^{a-1})$ is Gorenstein. 
\end{enumerate}
\end{exam}


\bigskip
\section{The  finiteness of the graphs of ideals $I$ with Gorenstein $\overline{G}(I)$}

Throughout this section, let $(A,\m)$ be as in Section 1. 

\begin{defn} \label{ArithGenus}
For a cycle $C>0$, let $p_a(C):=1-\chi(C)$.
We define the {\em arithmetic genus} of $A$ by $p_a(A)=\max\defset{p_a(C)}{C>0}$; this is independent of the choice of the resolution.
We put $\chi(A) = \min\defset{\chi(C)}{C>0}$, namely, $\chi(A)=1-p_a(A)$.
\end{defn}

\par \vspace{1mm}
In this section, we introduce the weighted dual graph of 
an $\m$-primary ideal $I=I_Z$ 
of $A$ and prove the finiteness of 
the weighted dual graphs of $I$ with Gorenstein normal
 tangent cone with $\br(I_Z)\ge 2$.
\par 
Let $X \to \spec (A)$ be a resolution of singularities
 with exceptional set $E$.
Let $E=\bigcup_{i=1}^n E_i$ be the decomposition into the irreducible 
components.
For $h\in \m$, we denote by $(h)_E$ the exceptional part of $\di_X(h)$.
Let $E_j^* \in \sum_{i=1}^n \Q E_i$ denote the element which satisfies $E_j^* E_i = -\delta_{ij}$.
 Let $h^i( \; * \; ):= \ell_A( H^i( \; * \; ))$. 

\begin{defn}\label{d:graph}
Let $X_I \to \spec (A)$ be the normalization of the blow-up of $I$, and $X\to X_I$ the resolution which is minimal with the property that the exceptional set $E \subset X$ is a simple normal crossing divisor.
We call $X \to \spec (A)$ the {\em minimal log resolution} of $I$.
Assume that $I$ is represented by a cycle $Z$ on $X$ and let $Z=\sum z_i E_i^*$.
We define the {\em weighted dual graph} $\Gamma(I)$ of $I$ as follows.
Let $\Gamma_E$ denote the usual weighted dual graph of $E$.
Let $\cE^*$ denote the set of vertices of $\Gamma_E$ corresponding to $E_i$ with $z_i\ne 0$.
For each vertex $v_i\in \cE^*$, corresponding to $E_i$, we add a new edge $e_i$ and a new vertex $w_i$ so that $v_i$ and $w_i$ are connected by $e_i$, and put a weight $z_i$ on $w_i$.
Then we obtain the graph $\Gamma(I)$ as the sum of $\Gamma_E$ and those new edges and vertices with weights $z_i$.
When we illustrate the dual graph $\Gamma(I)$, the vertices $w_i$ are denoted by arrowheads and its weight by $(z_i)$;
the genus is omitted if it is zero as usual.
\end{defn}

\begin{rem}
$\Gamma(I)$ is uniquely determined by $I$.
We can recover the coefficients of $Z$ from $\Gamma(I)$.
\end{rem}

\begin{ex}  \label{ex:236}
Assume that $A$ is a hypersurface singularity defined by the polynomial $x^2+y^3+z^6$.
Then we have the following.
\begin{center}
\begin{picture}(120,35)(80,10)
\put(80,25){\makebox(0,0){$\Gamma(\m)$: }}
\multiput(115,25)(40,0){2}{\circle*{4}}
\put(115,25){\line(1,0){40}}
\put(155,25){\vector(1,0){40}}
\put(115,15){\makebox(0,0){$[1]$}}
\put(195,15){\makebox(0,0){$(1)$}}
\put(115,35){\makebox(0,0){$-2$}}
\put(155,35){\makebox(0,0){$-1$}}
\end{picture}
\hspace{2cm}
\begin{picture}(120,35)(80,10)
\put(80,25){\makebox(0,0){$\Gamma(\m^2)$: }}
\put(115,25){\circle*{4}}
\put(115,25){\vector(1,0){40}}
\put(115,15){\makebox(0,0){$[1]$}}
\put(155,15){\makebox(0,0){$(2)$}}
\put(115,35){\makebox(0,0){$-1$}}
\end{picture}
\end{center}
\end{ex}

\begin{defn}
For $r \in \Z_{\ge 1}$, let $\G(A,r)$ denote the set of $\m$-primary integrally closed ideals $I$ of $A$ such that $\br(I)=r$ and $\overline{G}(I)$ is Gorenstein, and let $\GG(A,r)=\defset{\Gamma(I)}{I \in \G(A,r)}$.
\end{defn}

\begin{thm}\label{t:GG}
$\GG(A,r)$ is a finite set if $r \ge 2$.
Moreover, the restriction of the map $\Gamma\:  \G(A,r) \to \GG(A,r)$ to the set 
\[
\G(A,r)_0:=\defset{I \in \G(A,r)}{\text{$I$ is represented on the minimal resolution}}
\]
 is injective.
In particular, $\Gamma$ is injective on the set
\[
\defset{I \in \G(A,r)}{I = I_Z, \; \chi(Z)=\chi(A)}.
\]
\end{thm}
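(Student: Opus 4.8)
The plan is to prove finiteness of $\GG(A,r)$ by extracting numerical constraints on the cycle $Z$ from the hypothesis that $\overline{G}(I)$ is Gorenstein, and then arguing that only finitely many weighted dual graphs are compatible with these constraints. The crucial input is \thmref{Main}: for $I=I_Z \in \G(A,r)$ with $r \ge 2$, we have the relation $(r-1)Z^2 + K_XZ = 0$, equivalently $(r-2)Z^2 = 2\chi(Z)$. Since $Z$ is anti-nef and nonzero, $Z^2 < 0$, and since $r \ge 2$ and $\chi(Z) \le 0$ is forced by normality considerations, this single equation ties together $Z^2$, $K_XZ$, and $\chi(Z)$ in a rigid way. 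The first step is therefore to bound these intersection numbers. Using \thmref{Main}$(i)$ we have $\ell_A(A/I) \le p_g(A)+2-r$; combined with Kato's Riemann--Roch formula (\thmref{RRformula}), $\ell_A(A/I) + q(I) = \chi(Z)+p_g(A)$, this bounds $\chi(Z)$ from below in terms of the fixed invariant $p_g(A)$ and $r$. Feeding that back into $(r-2)Z^2 = 2\chi(Z)$ bounds $Z^2$, and then $K_XZ = -(r-1)Z^2$ is bounded as well.

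Once $Z^2$, $K_XZ$, and the number $n$ of exceptional components carrying nonzero coefficients are bounded, I would argue that the underlying graph $\Gamma_E$ on the \emph{minimal log resolution} of $I$ can be taken to lie in a finite list. The key geometric fact is that $I$ is represented on its minimal log resolution $X \to \Spec A$, and the discrepancy cycle $K_X$ and the self-intersection $Z^2$ control both the number of vertices and their weights: each $E_i$ appearing with $z_i \ne 0$ contributes to $K_XZ$ and $Z^2$ through the adjunction/intersection pairing, and a bounded value of these forces the Euler characteristics $\chi(E_i)$, the genera, and the self-intersections $E_i^2$ to range over finite sets. Since the resolution is a minimal log resolution, there are no superfluous $(-1)$-curves to inflate the graph, so the combinatorics is genuinely constrained. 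I would make this precise by observing that for a minimal good resolution the number of vertices and the magnitude of all weights are bounded by a function of $p_g(A)$, $Z^2$, and $K_XZ$, all of which are now bounded; hence $\GG(A,r)$ is finite.

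For the injectivity statement, the point is that $\Gamma(I)$ records, via the weights $z_i$ on the arrowhead vertices $w_i$, exactly the coefficients of $Z$ in the basis $\{E_i^*\}$, as noted in the Remark following \defref{d:graph}. Thus from $\Gamma(I)$ one recovers the cycle $Z$ on the fixed resolution determined by the graph $\Gamma_E$. If $I \in \G(A,r)_0$ is represented on the minimal resolution, then the pair $(X, Z)$ is reconstructed from $\Gamma(I)$, and since $I = H^0(X, \mathcal{O}_X(-Z))$ is determined by $Z$ on $X$, the ideal $I$ is recovered; hence $\Gamma$ is injective on $\G(A,r)_0$. The final clause follows because the condition $\chi(Z) = \chi(A)$ pins down the representation to the minimal resolution (the minimal log resolution and the minimal resolution coincide in the extremal case), placing such $I$ inside $\G(A,r)_0$.

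The main obstacle I anticipate is the passage from bounded intersection numbers to a bounded graph: a priori one must rule out graphs with many low-weight components that individually contribute little to $Z^2$ and $K_XZ$ but are numerous. The essential leverage is minimality of the log resolution together with the negative-definiteness of the intersection form on the exceptional set, which forbids long chains or large trees once $Z^2$ and $K_XZ$ are fixed; making this counting argument airtight — showing that only the vertices actually carrying the cycle, plus a controlled neighborhood forced by the simple-normal-crossing condition, can appear — is where the real work lies. I would handle it by bounding $-Z^2 = e_0(I)$ and $p_a(Z) = 1-\chi(Z)$ and invoking standard finiteness for dual graphs of bounded genus and bounded fundamental cycle, rather than by direct combinatorial enumeration.
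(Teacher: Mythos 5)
Your bounding strategy breaks down exactly where the theorem is hardest: the case $r=2$. The identity from \thmref{Main} is $(r-2)Z^2 = 2\chi(Z)$; when $r=2$ it says only that $\chi(Z)=0$ and gives no bound whatsoever on $Z^2$ or on $K_XZ = -(r-1)Z^2$. So the chain ``bound $\chi(Z)$, feed it back to bound $Z^2$, hence $K_XZ$'' works only for $r\ge 3$, and for elliptic ideals your numerical constraints carry no finiteness information at all: a priori there could be infinitely many anti-nef cycles with $\chi(Z)=0$ and arbitrarily negative $Z^2$. What rules this out in the paper is a genuinely geometric statement, \proref{p:ZcA}: on the minimal resolution, any cycle $Z>0$ such that $\cO_X(-Z)$ has no fixed component and $\chi(Z)\le 0$ satisfies $Z \not> Z_K$. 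Its proof uses Kato's Riemann--Roch together with Konno's theorem that the fixed part of the canonical system has positive $\chi$, and then \lemref{l:minres} (anti-nef cycles not dominating a fixed cycle form a finite set, since anti-nef cycles are non-negative combinations of the $E_j^*$, each of which has all coefficients positive) gives finiteness of the possible $Z_0=\phi_*Z$. Nothing in your proposal substitutes for this step, and without it the claim is unproved precisely for $\br(I)=2$.

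There is a second, smaller gap in the passage from bounded invariants to finitely many graphs, which you yourself flag as ``the main obstacle.'' The resolution in the paper is not an abstract finiteness theorem for dual graphs of bounded genus and bounded fundamental cycle (no such statement applies here: the graphs $\Gamma_E$ are iterated blow-ups of the \emph{fixed} graph of the minimal resolution of $A$, not resolution graphs of singularities with bounded invariants). Instead one uses the decomposition $Z=\phi^*Z_0+F$, $F=\sum a_iF_i$, of \proref{p:repMin}: minimality of the representation forces every $a_i\ge 1$ (otherwise some $(-1)$-curve $C$ has $ZC=0$), whence $\chi(F)=\sum a_i(a_i+1)/2 \le \chi(Z)-\chi(A) \le -\chi(A)$ bounds the number of blow-ups and their multiplicities uniformly in $r$; the centers of blow-ups then need only be known combinatorially, and the arrowhead weights and the final passage to the minimal log resolution are determined by $Z_0$, the $a_i$'s and the intersection numbers $F_iE_j$. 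Your injectivity argument, and your treatment of the clause $\chi(Z)=\chi(A)$, do match the paper's (though your parenthetical claim that the minimal log resolution coincides with the minimal resolution in that case is not needed and not quite right; what \proref{p:repMin} gives is only that $I$ is represented on the minimal resolution, i.e.\ $I\in\G(A,r)_0$). But as it stands the proposal does not prove finiteness of $\GG(A,2)$, so it has a genuine gap.
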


Note that if $A$ is elliptic, then every $I \in \G(A,2)$  is represented on the minimal resolution.
To prove \thmref{t:GG}, we use \thmref{Main} and some results.

\begin{lem}\label{l:minres}
 For any cycle  $W>0$,
 the following is a finite set.
\[
\defset{Z}{\text{$Z$ is an anti-nef cycle  on $X$ and 
$Z - W \not \ge 0$}}
\]
\end{lem}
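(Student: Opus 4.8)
The statement to prove is Lemma~\ref{l:minres}: for any cycle $W>0$, the set $\defset{Z}{\text{$Z$ is an anti-nef cycle on $X$ and $Z-W \not\ge 0$}}$ is finite.

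\medskip

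The plan is to reduce the problem to the finiteness of anti-nef cycles lying below a fixed bound. The key observation is that the condition $Z-W \not\ge 0$ forces at least one coefficient of $Z$ to be small: writing $Z=\sum z_i E_i$ and $W=\sum w_i E_i$, the failure of $Z-W\ge 0$ means $z_j < w_j$ for at least one index $j$. So the whole set decomposes as a finite union, over $j=1,\dots,n$, of the sets $S_j = \defset{Z}{\text{$Z$ anti-nef, } z_j < w_j}$, and it suffices to prove each $S_j$ is finite.

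\medskip

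First I would exploit the structure of anti-nef cycles. An anti-nef cycle $Z>0$ satisfies $ZE_i \le 0$ for all $i$, and since $Z$ is supported on a connected exceptional set with negative-definite intersection form, every coefficient $z_i$ is strictly positive (this is the standard fact that an effective anti-nef cycle is supported on all of $E$, using that the fundamental cycle divides it, or directly from negative definiteness). The crucial finiteness input is that, because the intersection matrix $(E_iE_j)$ is negative definite, bounding one coefficient $z_j$ from above controls all the others: concretely, one can show that on each $S_j$ the coefficients $z_i$ are uniformly bounded. The cleanest route is to use the dual basis $E_i^*$ defined by $E_i^* E_k=-\delta_{ik}$; writing $Z=\sum_i m_i E_i^*$ with $m_i = -ZE_i \ge 0$ (these are nonnegative integers since $Z$ is anti-nef and has integer coefficients against the integral cycles $E_i$), the anti-nef cycles correspond exactly to the tuples $(m_i)\in \Z_{\ge 0}^n$. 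Since each $E_i^*$ has all coefficients positive (again by negative definiteness and connectedness), the coefficient $z_j$ of $Z=\sum_i m_i E_i^*$ is a positive-coefficient linear form $z_j = \sum_i m_i (E_i^*)_j$ in the $m_i$; the bound $z_j < w_j$ then forces each $m_i$ into a finite range, whence $S_j$ is finite.

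\medskip

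The main obstacle, and the step deserving the most care, is establishing the positivity of all entries of each $E_i^*$, i.e.\ that $(E_i^*)_j > 0$ for all $i,j$. This is where negative definiteness of the intersection form together with connectedness of $E$ is essential; it is a classical fact (going back to Mumford/Artin) that $-E_i^*$ is an effective cycle with strictly positive coefficients on every component when $E$ is connected. Granting this, the rest is a finite-range estimate. I would state this positivity as the key lemma, cite it to the standard source on negative-definite plumbing/resolution intersection forms, and then assemble the finite union $\bigcup_j S_j$ to conclude. One should take care that $Z$ ranges only over \emph{integral} cycles, so that $m_i \in \Z_{\ge 0}$ and the finite range of each $m_i$ yields genuinely finitely many cycles.
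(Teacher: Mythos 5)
Your proposal is correct and follows essentially the same route as the paper: the paper's proof likewise rests on the two facts that every anti-nef cycle is a $\Z_{\ge 0}$-linear combination of the dual cycles $E_j^*$ and that each $E_j^*$ has strictly positive coefficient on every component $E_i$, so that a bound $z_j < w_j$ on a single coefficient bounds all the multiplicities $m_i$. You merely spell out the finite decomposition into the sets $S_j$ and the resulting finite-range estimate, which the paper leaves implicit.
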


\begin{proof}
It follows from the fact that any anti-nef cycle on $X$ is a linear combination of $\{E_j^*\}_j$
 with coefficients in $\Z_{\ge 0}$ and for any $E_j^*$ and any irreducible component 
 $E_i$ of $E$, the coefficient of $E_i$ of $E_j^*$ is positive. 
\end{proof}

\par \vspace{1mm}
Recall that for cycles $C,D>0$, we have 
$\chi(C+D)=\chi(C)+\chi(D)-CD$.

\begin{prop}\label{p:ZcA}
Assume that  $X \to \spec (A)$ is the minimal resolution.
Let $Z_K$ denote the cycle such that $(K_X+Z_K)E_i=0$ for all $i$, 
and let  $Z>0$ be a cycle on $X$ such that $\cO_X(-Z)$
has no fixed component and  $\chi(Z) \le 0$.
Then $Z\not > Z_K$. 
Therefore, the set of such cycles  $Z$  is a finite set.
\end{prop}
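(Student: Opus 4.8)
The plan is to translate everything into the intersection form on $X$ by means of two identities coming from the definition of $Z_K$. Since $(K_X+Z_K)E_i=0$ for all $i$, extending by linearity gives $K_XD=-Z_KD$ for every cycle $D$; in particular $\chi(Z_K)=-\tfrac12(Z_K^2+K_XZ_K)=0$, and for the cycle $Z$ in question
\[
\chi(Z)=-\frac{Z^2+K_XZ}{2}=-\frac12\,Z(Z-Z_K).
\]
Thus the hypothesis $\chi(Z)\le 0$ reads $Z(Z-Z_K)\ge 0$, and the goal becomes showing that this is incompatible with $Z>Z_K$.

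The place where the hypothesis on $\cO_X(-Z)$ enters — and what I expect to be the conceptual crux — is that having no fixed component forces $Z$ to be \emph{anti-nef}. I would argue this by valuations: for each $i$ there is $f\in H^0(X,\cO_X(-Z))\subset A$ with $v_{E_i}(f)=z_i$ and $v_{E_j}(f)\ge z_j$ for all $j$. Writing the exceptional part of $\di_X(f)$ as $Z+P$ with $P\ge 0$ and with the coefficient of $E_i$ in $P$ equal to $0$, and using $\di_X(f)E_i=0$ together with effectivity of $P$ and of the non-exceptional part $C_f$, one obtains $ZE_i=-PE_i-C_fE_i\le 0$. Hence $ZE_i\le 0$ for every $i$.

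Now I would argue by contradiction: suppose $Z>Z_K$ and set $D=Z-Z_K>0$. The displayed identity gives $\chi(Z)=-\tfrac12 ZD$, so $\chi(Z)\le 0$ forces $ZD\ge 0$, while anti-nefness gives $ZD=\sum_{i\in\Supp D}D_i(ZE_i)\le 0$. Therefore $ZD=0$, and since each $D_i>0$ we must have $ZE_i=0$ for every $i\in\Supp D$. For such $i$,
\[
DE_i=ZE_i-Z_KE_i=-Z_KE_i=K_XE_i\ge 0,
\]
where the last inequality is the standard fact that $K_XE_i\ge 0$ for all $i$ on the minimal resolution. Consequently $D^2=\sum_{i\in\Supp D}D_i(DE_i)\ge 0$, contradicting the negative definiteness of the intersection form on the nonzero cycle $D$. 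This proves $Z\not>Z_K$; the one external input is the minimality of the resolution through $K_XE_i\ge0$.

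Finally, for finiteness I would feed $Z\not>Z_K$ into \lemref{l:minres}. Because $Z_KE_i=-K_XE_i\le 0$, the cycle $Z_K$ is itself anti-nef, hence $Z_K\ge 0$, so $W:=\ce{Z_K}$ is an effective integral cycle with $W\ge Z_K$ (in the degenerate case $Z_K=0$ one has $\chi(Z)=-\tfrac12 Z^2>0$ for all $Z>0$, so the set is empty). If $Z$ is one of our cycles with $Z\ne Z_K$, then $Z\not>Z_K$ means some coefficient satisfies $z_i<(Z_K)_i\le W_i$, whence $Z-W\not\ge 0$. Since every such $Z$ is anti-nef, \lemref{l:minres} gives only finitely many, and adjoining the at most one cycle equal to $Z_K$ leaves the set finite.
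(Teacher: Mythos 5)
Your proof is correct, but it takes a genuinely different route from the paper's. The paper argues cohomologically: assuming $Z=Z_K+Y$ with $Y>0$, it uses Kato's Riemann--Roch formula to force $\chi(Z)=h^1(\cO_X(-Z))=0$ and $H^0(\cO_X(-Z_K))=H^0(\cO_X(-Z))$, so that $Y$ is a fixed component of $\cO_X(-Z_K)$; it then invokes Konno's theorem to get $\chi(Y)>0$, which contradicts $\chi(Y)=YZ_K\le 0$ (anti-nefness of $Z_K$ on the minimal resolution). You instead stay entirely inside the intersection lattice: you extract anti-nefness of $Z$ from the no-fixed-component hypothesis by the valuative argument $0=\di_X(f)E_i=(Z+P+C_f)E_i$, rewrite $\chi(Z)=-\tfrac12 Z(Z-Z_K)$ via $K_XE_i=-Z_KE_i$, and derive a contradiction with negative definiteness of the intersection form using only $K_XE_i\ge 0$ on the minimal resolution. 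Your argument is more elementary and self-contained — it needs no Riemann--Roch and no external result on fixed loci of canonical systems — and it also makes explicit two points the paper leaves implicit: that the cycles in question are anti-nef (which is what licenses the appeal to \lemref{l:minres} for finiteness) and the degenerate case $Z_K=0$, where the set is empty. What the paper's route buys is brevity given its cited machinery, and a structural explanation (identifying $Y$ as a fixed component of the canonical cycle) that connects this proposition to the geometry of the canonical system; your route buys independence from those inputs at the cost of the valuative lemma on anti-nefness, which is standard.
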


\begin{proof}
 Note that $\ell_A(A/H^0(\cO_X( -Z_K ) ) )=p_g(A)$ and that  
$\ell_A(A/H^0(\cO_X( -Z ) ) )=\chi(Z)- h^1(\cO_X( -Z ) ) ) + p_g(A)$ by Kato's Riemann-Roch formula.
Assume that $Z = Z_K +Y$ with $Y > 0$. Then, since 
\[
\ell_A(A/H^0(\cO_X( -Z ) ))  
\ge \ell_A(A/H^0(\cO_X( -Z_K ) ) )=p_g(A),
\] 
we have  $\chi(Z)= h^1(\cO_X( -Z ) ) )=0$.  
Since 
\[
H^0(X, \cO_X(-Z_K)) 
= H^0(X, \cO_X( -Z)),
\]
 $Y$ is the fixed component of $\cO_X(-Z_K)$. 
Hence $\chi(Y)>0$ by \cite{K}.
On the other hand,  
\[
0 = \chi(Z) = \chi(Z_K) + \chi(Y) - Y Z_K = \chi(Y) - Y Z_K. 
\]
The minimality of the resolution $X$ implies that $Z_K$ is anti-nef,
and thus  $\chi(Y) = Y Z_K \le 0$; however, it contradicts that $\chi(Y) > 0$. 
Hence $Y\not>0$. 
The last assertion follows from \lemref{l:minres}
\end{proof}

\begin{prop}\label{p:repMin}
Assume that $I$ is minimally represented by a cycle $Z>0$ on $X$, 
 namely, $ZC<0$ holds for every $(-1)$-curve $C$ on $X$. 
Let  $X_0 \to \spec A$ be the minimal resolution and $\phi: X\to X_0$ the natural morphism. 
Assume that $\phi$ is the composite $\phi_1\circ \cdots \circ \phi_m$ of blow-ups $\phi_i$.
Let $F_i$ denote the total transform of the exceptional divisor of $\phi_i$ on $X$ and $Z_0=\phi_*Z$.
If $F=\sum_{i=1}^m a_i F_i=Z - \phi^*Z_0$, then 
 we have the following formulas: 
\begin{enumerate}
\item 
 $\chi(Z)-\chi(Z_0) =\chi(F) = \displaystyle{\sum_{i=1}^m} a_i(a_i+1)/2 
 \le \chi(Z)-\chi(A)$. 
\par \vspace{1mm} \par \noindent 
In particular, if $\chi(Z) = \chi(Z_0)$, we must have $X = X_0$. 
\item $- Z^2 = - Z_0^2 + \displaystyle{\sum_{i=1}^m} a_i^2$. 
\end{enumerate}
\end{prop}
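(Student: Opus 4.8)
The plan is to pass everything to intersection theory on $X$ and to use the standard behaviour of intersection numbers under point blow-ups. First I would record the basic relations satisfied by the total transforms $F_i$. Since $\phi=\phi_1\circ\cdots\circ\phi_m$ is a composite of blow-ups of points, the $F_i$ satisfy $F_iF_j=-\delta_{ij}$ (each is a pullback of a $(-1)$-curve, and distinct total transforms are orthogonal) and $F_i\cdot\phi^*D=0$ for every divisor $D$ on $X_0$ (by the projection formula, since $\phi_*F_i=0$). Because each single blow-up contributes discrepancy $1$, an induction on $m$ gives $K_X=\phi^*K_{X_0}+\sum_{i=1}^m F_i$. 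Writing $Z=\phi^*Z_0+F$ with $F=\sum_{i=1}^m a_iF_i$, the $a_i$ are integers: $\phi^*Z_0$ is integral and the change of basis between the irreducible $\phi$-exceptional curves and the $F_i$ is triangular with $1$'s on the diagonal, hence unimodular.

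With these relations the two displayed formulas are a direct computation. For (2), $Z^2=(\phi^*Z_0)^2+2\,\phi^*Z_0\cdot F+F^2=Z_0^2+0-\sum a_i^2$, i.e. $-Z^2=-Z_0^2+\sum a_i^2$. For (1) I would compute $K_XZ=(\phi^*K_{X_0}+\sum_j F_j)(\phi^*Z_0+\sum_i a_iF_i)=K_{X_0}Z_0-\sum a_i$, and combine it with the expression for $Z^2$ to get
\[
\chi(Z)-\chi(Z_0)=-\frac{Z^2+K_XZ}{2}+\frac{Z_0^2+K_{X_0}Z_0}{2}=\sum_{i=1}^m\frac{a_i^2+a_i}{2}=\sum_{i=1}^m\frac{a_i(a_i+1)}{2}.
\]
The same relations give $F^2=-\sum a_i^2$ and $K_XF=-\sum a_i$, whence $\chi(F)=\sum_i a_i(a_i+1)/2$ as well, so $\chi(Z)-\chi(Z_0)=\chi(F)=\sum_i a_i(a_i+1)/2$, which is the chain of equalities in (1).

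It remains to prove $\chi(F)\le\chi(Z)-\chi(A)$, equivalently $\chi(Z_0)\ge\chi(A)$, and for this I must first know that $Z_0>0$. Here I would use that $Z$ is a nonzero anti-nef cycle: as recalled in the proof of \lemref{l:minres}, every $E_j^*$ has strictly positive coefficient on each component of $E$, so a nonzero anti-nef cycle has positive coefficient on every exceptional curve of $X$; in particular $Z$ has positive coefficients on the strict transforms of the exceptional curves of $X_0$, and therefore $Z_0=\phi_*Z>0$. Then $\chi(Z_0)\ge\min\defset{\chi(C)}{C>0}=\chi(A)$ by \defref{ArithGenus}, giving the inequality. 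For the last assertion, if $X\ne X_0$ then $m\ge1$ and the exceptional curve $F_m$ of the final blow-up is a $(-1)$-curve on $X$; the minimality hypothesis forces $0>Z\cdot F_m=a_m(F_m^2)=-a_m$, so $a_m\ge1$ and hence $\chi(F)\ge a_m(a_m+1)/2\ge1>0$, i.e. $\chi(Z)>\chi(Z_0)$. Contrapositively, $\chi(Z)=\chi(Z_0)$ forces $X=X_0$.

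The intersection computations are routine; the steps that need the most care are the justification that $Z_0>0$ (so that $\chi(A)$ may be applied) and the bookkeeping showing $K_X=\phi^*K_{X_0}+\sum F_i$ with coefficient exactly $1$ on each total transform, together with the elementary observation that each summand $a_i(a_i+1)/2$ is a nonnegative integer. Once $Z_0>0$ is secured, the remaining arguments are formal.
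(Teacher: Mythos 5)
Your proof is correct, and it rests on the same orthogonal decomposition $Z=\phi^*Z_0+\sum_{i=1}^m a_iF_i$ with $F_iF_j=-\delta_{ij}$ and $F_i\cdot\phi^*D=0$ that the paper uses; part (2) is handled identically in both. The difference is in how part (1) is organized. The paper first invokes minimality to show that \emph{every} $a_i$ is positive (arguing that $a_i=0$ would give $ZF_i=0$ and hence a $(-1)$-curve orthogonal to $Z$ --- a step which tacitly uses that the support of each $F_i$ contains a $(-1)$-curve of $X$), and then obtains $\chi(F)=\sum_i a_i(a_i+1)/2$ from the additivity formula $\chi(C+D)=\chi(C)+\chi(D)-CD$ together with an induction on the coefficients. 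You instead derive all the equalities from the discrepancy formula $K_X=\phi^*K_{X_0}+\sum_i F_i$ by direct expansion, valid for arbitrary integers $a_i$, and you use minimality only once, at the last exceptional curve $F_m$ (which is visibly a $(-1)$-curve, so $ZF_m=-a_m<0$ gives $a_m\ge 1$); combined with the elementary fact that $a(a+1)/2\ge 0$ for every integer $a$, this yields the ``in particular'' clause. Your route buys two points of rigor that the paper leaves implicit: it avoids the unproved (though true) claim that each $F_i$ supports a $(-1)$-curve of $X$, and it justifies $Z_0=\phi_*Z>0$ via the positivity of the coefficients of a nonzero anti-nef cycle (the fact recalled in the proof of \lemref{l:minres}), which is exactly what licenses the comparison $\chi(Z_0)\ge\chi(A)$ from \defref{ArithGenus}. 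What the paper's route buys is the stronger intermediate fact that minimality forces $a_i>0$ for \emph{all} $i$; this is not part of the stated proposition, but it is quietly reused later (e.g.\ in the case analysis proving \thmref{Main: d-4,5}). If you wanted your argument to support those later uses as well, you would extend your $F_m$-argument to each $F_i$ by observing that, among the exceptional curves lying over $p_i$, the one created last is still a $(-1)$-curve on $X$ and has intersection $-a_i$ with $Z$ up to the contributions of later blow-ups over $p_i$ --- i.e.\ exactly the refinement the paper's one-line assertion presupposes.
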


\begin{proof}
 First note that $F_iF_j=-\delta_{ij}$.
 Hence (2) follows.
If $m>0$ and $a_i=0$ for some $i$, then $ZF_i=0$, and hence 
there exists a $(-1)$-curve $C$ such that $ZC=0$; it contradicts the minimality of the representation.
Assume that $a_i>0$ for all $i$.
We have $\chi(Z)=\chi(\phi^*Z_0)+\chi(F)=\chi(Z_0)+\chi(F)$. 
Since 
$\chi(a_iF_i)=\chi((a_i-1)F_i)+\chi(F_i) -(a_i-1)F_i^2$, 
we have \[
\chi(F)=\sum_{i=1}^m \chi(a_i F_i)=\sum_{i=1}^m a_i(a_i+1)/2.
\]
The inequality follows from the definition of $\chi(A)$.
\end{proof}

\begin{rem}\label{r:ai}
In the situation of \proref{p:repMin}, take a general element $h\in I$ and let $V=\di_X(h)-(h)_E$ and $V_0=\phi_*V$.
Let $p_i$ be the center of the blow-up $\phi_i\: X_i\to X_{i-1}$, where $X=X_m$, and $V_i$ the proper transform of $V_0$ on $X_{i-1}$.
 Then $Z=(h)_E$ and $\di_{X_0}(h)=Z_0+V_0$, where $Z_0= \phi_*Z$ as above.
Therefore, $\phi^*V_0=\di_X(h)-\phi^*Z_0=F+V$, because $Z=\phi^*Z_0+F$.
This implies that the multiplicity of $V_{i-1}$ at $p_i$ is $a_i$.
Moreover, $p_i$ is a base point of $\cO_{X_{i-1}}(-Z_i)$, where $Z_i = (\phi_i\circ \cdots \circ \phi_m)_*Z$.
\end{rem}

\begin{proof}[Proof of $\thmref{t:GG}$]
It immediately follows from \proref{p:repMin} that the ideals $I_Z$  with $\chi(Z)=\chi(A)$ are represented on the minimal resolution.
\par 
Let us consider the situation as in \proref{p:repMin}.
Note that by \thmref{Main}, $\chi(Z)\le 0$ for $I_Z \in \G(A,r)$ with $r\ge 2$.
We will show the finiteness of the process obtaining the minimal log resolution from $X_0$.
We have the finiteness of the cycles $Z_0=\phi_*Z$  by \proref{p:ZcA}.
In particular,  $\G(A,r)_0$ is a finite set.
Since what we have to know is the combinatorial data, we may regard the number of choices of the center of blow-ups $\phi_i$ as finite; 
 if $E^{(i-1)}\subset X_{i-1}$ is the exceptional set, then combinatorially, there is no need to distinguish between two smooth points of $E^{(i-1)}$ if they are on the same irreducible component of $E^{(i-1)}$. 
Moreover, the combinatorial type of  those plane curve singularities  are determined by the weighted dual graph $\Gamma$ of $A$, and the intersection numbers 
 $-ZE_j=-(F+\phi^*Z_0)E_j$, which is $z_j$ in \defref{d:graph},
  are determined by $Z_0(\phi_*E_j)$ and $\{a_i\}_i$ and $\{F_iE_j\}_i$. 
Note that if $h\in I$ is a general element and $\di_X(h)=Z+V$, then $ZE_j=-VE_j$, and any point of $V \cap E$ is a non-singular point of $E$ because $\cO_X(-Z)$ is generated.
Since the minimal log resolution $\tilde X \to \spec (A)$ of $I$ is obtained by resolving non-normal crossing points of $E \subset X$, the data of the arrowheads of $\Gamma(I)$ is already determined by the intersection numbers $ZE_j$ and $\Gamma$.
Hence $\GG(A)$ is a finite set  and $\Gamma |_{\G(A,r)_0}$ is injective. 
\end{proof}

\begin{rem}\label{r:pginfty}
If $I$ is a $p_g$-ideal, then $\ol G(I)$ is Gorenstein if and only if $I$ is good (see \cite{OWY6}).
Therefore, the set $\GG(A,1)$ is infinite because the set of $p_g$-ideals of $A$ is closed under product and if $I$ is a good ideal of $A$, then so is $I^n$ for every $n\in \Z_{\ge 1}$. 
\end{rem}

\section{Homogeneous hypersurfaces of degree $4$ and $5$}

\begin{noname}\label{5;setting}  In this section let  $A = k[X,Y,Z]_{(X,Y,Z)}/ (f)$, 
where $f$ is a homogeneous polynomial of degree $d\ge 3$.
The aim of this section is to classify the $\m$-primary integrally closed ideals
 $I \subset A$ such that 
$\br(I) \ge 2$ and  $\overline{G}(I)$ is Gorenstein in the case $d\le 5$. 
\end{noname}

 Surprisingly, there are very few of such ideals and we can 
list all of them. 

\begin{thm}\label{Main: d-4,5} Let $A, I$ be as in \ref{5;setting}. 
If $\br(I) \ge 2$ and  $\overline{G}(I)$ is Gorenstein, then $I$ is 
one of the followings.
\begin{enumerate}
\item If $d=3$, then $I=\m$, the maximal ideal. 
\item If $d=4$, then $I=\m$ or $\m^2$.
\item If $d=5$, then $I= \m$, $\m^3$ or $I(L) = (L) +\m^2$, which is defined in Definition 
 \ref{I(L)def}.    
\end{enumerate}
\end{thm}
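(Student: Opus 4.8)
The plan is to reduce the classification to the numerical and combinatorial constraints that Theorem~\ref{Main} imposes, and then to run through the finitely many possibilities for each degree $d=3,4,5$. For the ambient ring $A=k[X,Y,Z]/(f)$ with $f$ homogeneous of degree $d$, the cone structure gives a good handle on the geometry: the minimal resolution is obtained by a single blow-up of the vertex, whose exceptional curve $E$ is the smooth plane curve $\{f=0\}\subset \PP^2$ of degree $d$ and genus $g=\binom{d-1}{2}$, with $E^2=-d$ and, by adjunction, $K_XE=d(d-3)$. In particular $p_g(A)=g=\binom{d-1}{2}$, so that $p_g(A)=0,1,3,6$ for $d=2,3,4,5$.

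First I would combine the key inequality from Theorem~\ref{Main} (when $\overline{G}(I)$ is Gorenstein with $r=\br(I)\ge 2$ one has $\ell_A(A/I)\le p_g(A)+2-r$) with the geometric identity $(r-1)Z^2+K_XZ=0$ to constrain $Z$ severely. Since $p_g(A)$ is small, the bound $\ell_A(A/I)\le p_g(A)+2-r$ already forces $r$ to be small and $\ell_A(A/I)$ to be tiny; for $d=3$ it forces $r=2$ and $I=\m$, matching the elliptic-singularity case (here $A$ is a simple elliptic singularity and $\m$ is the only elliptic ideal up to the analysis already done). For $d=4,5$ I would write any anti-nef $Z$ as $Z=\sum z_i E_i^*$ on the minimal (log) resolution and use $Z^2$, $K_XZ$ computed from the intersection data to solve $(r-1)Z^2+K_XZ=0$; because the relevant cycles are bounded (Proposition~\ref{p:ZcA} and Proposition~\ref{p:repMin} show $\chi(Z)\le 0$ forces $Z$ to lie in a finite, explicitly describable set, and the finiteness theorem \thmref{t:GG} guarantees only finitely many graphs arise), this is a finite check.

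Next I would argue that, for these low-degree cones, the ideals represented on the minimal resolution are exactly the ones of the form $\overline{\m^k}$ together with the special ideal $I(L)=(L)+\m^2$ attached to a line $L$ (Definition~\ref{I(L)def}); any $I=I_Z$ with $\chi(Z)=\chi(A)$ is represented on the minimal resolution by \proref{p:repMin}, and on the single curve $E$ the anti-nef cycles that satisfy the Gorenstein numerical condition can be enumerated directly. Concretely, for $d=4$ one checks $\m$ (with $\br=2$, so $\m$ is an elliptic ideal and Theorem~\ref{EllipGor} applies) and $\m^2$; for $d=5$ one checks $\m$, $\m^3$, and the linear-form ideal $I(L)$, in each case verifying $(r-1)Z^2+K_XZ=0$ and the Cohen--Macaulayness of $\overline{G}(I)$, and ruling out all other candidate cycles by the bound $\ell_A(A/I)\le p_g(A)+2-r$. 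Any $I$ not represented on the minimal resolution would have $\chi(Z)<\chi(A)$, which by \proref{p:repMin}(1) only increases $\ell_A(A/I)$ and quickly violates the bound, so such $I$ cannot occur.

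The main obstacle I anticipate is the honest verification of the two genuinely new ideals at $d=5$, namely $\m^3$ and $I(L)$: one must exhibit a resolution on which each is represented by an explicit anti-nef $Z$, compute $r=\br(I)$, $Z^2$ and $K_XZ$, and confirm both the Cohen--Macaulayness of $\overline{G}(I)$ and the equality $(r-1)Z^2+K_XZ=0$. The case $I(L)$ is the delicate one, since passing to the minimal log resolution introduces extra exceptional components from blowing up along the line $L$ on the quintic cone, and keeping track of the intersection numbers $z_j=-ZE_j$ and the arithmetic genus contributions through those blow-ups (as organized in \remref{r:ai}) is where the bookkeeping is easiest to get wrong. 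Everything else is a finite, if tedious, enumeration controlled by the smallness of $p_g(A)$.
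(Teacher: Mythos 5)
Your argument has genuine gaps, both numerical and structural. First, the numerical foundation is wrong: $p_g(A)$ is \emph{not} the genus of the exceptional curve. By Lemma~\ref{deg d hyp}(1), $p_g(A)=S(d)=d(d-1)(d-2)/6$, so $p_g(A)=4$ for $d=4$ and $p_g(A)=10$ for $d=5$, whereas the genus $(d-1)(d-2)/2$ of $C$ is a strictly smaller number once $d\ge 4$. (Likewise $K_{X_0}C=(2-d)C^2=d(d-2)$, not $d(d-3)$; you forgot to subtract $C^2$ in adjunction.) With the correct values, the inequality $\ell_A(A/I)\le p_g(A)+2-r$ from \thmref{Main} is far too weak to be the engine you want: for $d=5$ and $r=2$ it only gives $\ell_A(A/I)\le 10$, which excludes essentially nothing (indeed $\m^3$ attains $\ell_A(A/\m^3)=10$). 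The paper's actual mechanism is different: reduce to $r=2$, i.e.\ $\chi(Z)=0$, via \corref{VerNTC}, write $Z=\phi^*(uC)+\sum_i a_iF_i$ as in \proref{p:repMin}, and solve $\chi(uC)+\sum_i a_i(a_i+1)/2=0$ case by case in $u$.

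Second, and more seriously, your closing claim --- that any $I$ not represented on the minimal resolution has $\chi(Z)<\chi(A)$ and can be dismissed --- is backwards and self-defeating. \proref{p:repMin}(1) gives $\chi(Z)=\chi(Z_0)+\chi(F)\ge \chi(Z_0)\ge\chi(A)$, and your claim would eliminate $I(L)$ itself: on the minimal resolution $X_0$ the exceptional divisor is the irreducible curve $C$, so the only anti-nef cycles there are $uC$ and the only ideals represented on $X_0$ are the powers $\m^u$; the ideal $I(L)$ is represented only after $d=5$ blow-ups at $C\cap V(L)$, with $Z=\phi^*C+\sum_{i=1}^5F_i$ and $\chi(Z)=0>\chi(C)=-5\ge\chi(A)$. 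Finally, even after repairing this, enumerating cycles with $\chi(Z)=0$ is not sufficient, because the numerical condition $(r-1)Z^2+K_XZ=0$ is necessary but not sufficient. Concretely, for $d=4$, $u=1$, blowing up two distinct points of $C$ gives $Z=\phi^*C+F_1+F_2$ with $\chi(Z)=-2+1+1=0$, which passes your numerical test yet corresponds to no ideal in the classification. The paper excludes it by a cohomological argument that has no counterpart in your proposal: by \lemref{ZE_0}, $ZE_0=-ud+\sum_{i\in P}a_i\le -4+2<0$, hence $q(\infty I)=0$ by \lemref{l:q(inf)}; but $\br(I)=2$ forces $q(I)=q(\infty I)\ge q(u\m)>0$ by Lemma~\ref{deg d hyp}, a contradiction. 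This interplay between $ZE_0$, $q(\infty I)$ and ellipticity is the essential missing idea; without it your ``finite check'' cannot correctly terminate.
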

\par 
Let us prove the Theorem \ref{Main: d-4,5} in several steps. 
\par 
We basically use the notation from the preceding section. 
Let $X_0$ be the minimal resolution of $\Spec(A)$ and 
$C$ denotes the unique exceptional curve  of  $X_0$. 
Let $I = I_Z$ be minimally represented by a cycle $Z$ on a resolution $X$ and 
$\phi : X \to X_0$ be the natural morphism.  
We denote  $Z_0 = \phi_*(Z) = uC$ and let $E_0$ be the proper transform of $C$ on $X$.
Also, following Proposition \ref{p:repMin}, we denote 

\[Z = \phi^*(uC) + F, \quad F = \sum_{i=1}^m a_iF_i.\] 

We put $S(t)=t(t-1)(t-2)/6$. 
\par 

First, let us recall the fundamental results. 

\begin{lem}\label{deg d hyp} 
We have the following.
\begin{enumerate}
\item  $K_{X_0}=(2-d)C$, $g(C) =  (d-1)(d-2)/2$, and $p_g(A)=S(d)$.
\item $\chi(Z_0)  = \chi(uC)  = du (u - d +2 )/2$. Hence $\chi(Z_0) \le 0$ if and only if $u\le d - 2$.
\item $\br(I) \le \br(\m) = d-1$. 
\item For $0\le n \le d$, $\ell_A(A/\m^n) = S(n+2), q(n\m) = S(d-n)$.
\item $I \subset \m^u$, and  $I=\m^u$ if and only if $\ell_A(A/I)=S(u+2)$.
Moreover, 
\[
\ell_A(A/I)=\ell_A(A/\m^u)+\chi(F)-q(I)+q(u\m), \quad q(I)\ge q(u\m).
\]
\end{enumerate}
\end{lem}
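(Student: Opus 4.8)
The plan is to use that $A$ is the local ring at the vertex of a cone, so that its minimal resolution is completely explicit. Since $A$ is normal with isolated singularity and $f$ is homogeneous of degree $d$, the plane curve $C=\{f=0\}\subset\PP^2$ is smooth of degree $d$, and $X_0\to\Spec A$ is the blow-up of the vertex, with $C$ sitting in $X_0$ as the zero section of the total space of $\cO_C(-1)$. I record the three inputs I will reuse: $C^2=-d$ (normal bundle $\cO_C(-1)$ has degree $-d$), the genus--degree formula $g(C)=(d-1)(d-2)/2$, and $\omega_C=\cO_C(d-3)$; write $\cL:=\cO_C(1)$, $\deg\cL=d$. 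For $(1)$, since $C$ is the only exceptional curve, $K_{X_0}$ is a multiple $\alpha C$ on the exceptional part, and adjunction $2g(C)-2=(K_{X_0}+C)C$ gives $K_{X_0}C=d(d-3)-C^2=d(d-2)$, whence $\alpha(-d)=d(d-2)$ and $K_{X_0}=(2-d)C$. For $p_g$ I would use that the projection $\pi\colon X_0\to C$ is affine with $\pi_*\cO_{X_0}=\bigoplus_{n\ge0}\cL^{\,n}$, so $p_g=h^1(\cO_{X_0})=\sum_{n\ge0}h^1(C,\cL^{\,n})$; Serre duality rewrites this as $\sum_{n\ge0}h^0(\cO_C(d-3-n))$, and the restriction sequence $0\to\cO_{\PP^2}(m-d)\to\cO_{\PP^2}(m)\to\cO_C(m)\to0$ gives $h^0(\cO_C(m))=\binom{m+2}{2}$ for $0\le m\le d-1$, so a hockey-stick summation yields $p_g=\sum_{m=0}^{d-3}\binom{m+2}{2}=\binom{d}{3}=S(d)$.

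Part $(2)$ is then a one-line computation from $\chi(D)=-(D^2+K_XD)/2$ with $D=uC$, using $C^2=-d$ and $K_{X_0}=(2-d)C$. For $(4)$, $\ell_A(A/\m^n)$ is the Hilbert function: $\dim_kA_j=\binom{j+2}{2}$ for $j<d$, so for $n\le d$ one sums $\sum_{j=0}^{n-1}\binom{j+2}{2}=\binom{n+2}{3}=S(n+2)$. For $q(n\m)=h^1(\cO_{X_0}(-nC))$ I would repeat the $\pi_*$ computation with $\pi_*\cO_{X_0}(-nC)=\bigoplus_{m\ge n}\cL^{\,m}$, obtaining $q(n\m)=\sum_{m\ge n}h^0(\cO_C(d-3-m))=\binom{d-n}{3}=S(d-n)$.

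Part $(3)$ splits into two. The equality $\br(\m)=d-1$ follows by feeding $q(n\m)=S(d-n)$ into \proref{q(nI)formula}(3), since $S(d-n+1)=S(d-n)$ first holds at $n=d-1$ (both sides then vanish); equivalently a general linear reduction $Q$ gives $A/Q\cong k[t]/(t^d)$, whose socle lies in degree $d-1$. The inequality $\br(I)\le\br(\m)$ for arbitrary $I$ is the one statement that is not a single local computation, and I expect it to be the main obstacle: I would derive it from the comparison results already available for this class of singularities, the alternative being an intrinsic argument showing, via the explicit $K_X$, that the decreasing sequence $\{q(nI)\}$ must stabilize no later than $n=d-1$.

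Finally, for $(5)$: the containment $I\subset\m^u$ holds because $\phi_*Z=uC$ forces $\ord_C(h)\ge u$ for every $h\in I$, i.e. $I\subset\{h:\ord_C(h)\ge u\}=\overline{\m^u}=\m^u$, the last equality being the normality of $\m$ (for the cone, $\overline{\m^k}=\bigoplus_{j\ge k}A_j=\m^k$); granting $(4)$, the criterion $I=\m^u\Leftrightarrow\ell_A(A/I)=S(u+2)$ is then immediate from this containment and $\ell_A(A/\m^u)=S(u+2)$. For the length formula I would apply Kato's Riemann--Roch (\thmref{RRformula}) to $I=I_Z$ and to $\m^u=\overline{\m^u}$ represented by $uC$ on $X_0$, and subtract, using $\chi(Z)=\chi(uC)+\chi(F)$ from \proref{p:repMin}(1); this gives exactly $\ell_A(A/I)=\ell_A(A/\m^u)+\chi(F)-q(I)+q(u\m)$. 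The inequality $q(I)\ge q(u\m)$ I would obtain from $0\to\cO_X(-Z)\to\cO_X(-\phi^*uC)\to\cO_X(-\phi^*uC)|_F\to0$: since $\phi^*uC\cdot F_i=0$, the restricted sheaf has degree $0$ on each component of the rational exceptional configuration $F$, so $H^1(F,\cO_X(-\phi^*uC)|_F)=0$ and $h^0$ equals $\chi(F)$; the injection $\m^u/I\hookrightarrow H^0(F,\cO_X(-\phi^*uC)|_F)$ then gives $\ell_A(\m^u/I)\le\chi(F)$, which is equivalent to $q(I)\ge q(u\m)$.
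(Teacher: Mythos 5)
Your proposal is correct, and on part (5) --- the only part for which the paper writes out an actual argument --- you take essentially the same route: the exact sequence $0 \to \cO_X(-Z) \to \cO_X(-\phi^*Z_0) \to \cO_F \to 0$ giving $I \subset \m^u$ and $q(I)\ge q(u\m)$ via $H^1(\cO_F)=0$, with your ``apply Kato's Riemann--Roch twice and subtract'' derivation of the length formula being an immaterial variant of the paper's use of the long exact sequence.

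For the remaining parts the paper offers no argument at all (it declares (1)--(2) ``clear'' and cites \cite[\S 4.1]{OWY5} for (3)--(4)), so your explicit cone computations simply fill in what the paper omits; in particular the inequality $\br(I)\le\br(\m)$, which you flag as the main obstacle and defer to known comparison results, is exactly the statement the paper itself obtains only by citation to \cite{OWY5} (the cone-like comparison theorem), so that deferral is not a gap relative to the paper's own proof.
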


\begin{proof} 
The claims (1) and (2) are clear.
The claims (3) and (4) follow from the formulas in \cite[\S 4.1]{OWY5}. 
\par
Now we prove (5).
 We have the following exact sequence:
\[
0 \to \cO_X(-Z) \to \cO_X(-\phi^*Z_0) \to \cO_F \to 0.
\]
Since $H^0(\cO_X(-\phi^*Z_0))=H^0(\cO_{X_0}(-Z_0))=\m^u$, 
we have $I \subset \m^u$ and 
\[
\ell_A(A/I)-\ell_A(A/\m^u)= \ell_A(\m^u/I)=\chi(F) - q(I) + q(u \m).
\]
From (4), $\ell_A(A/I)=\ell_A(A/\m^u)$ if and only if $\ell_A(A/I)=S(u+2)$.
Since $H^1(\cO_F)=0$, we have $q(I) \ge q(\m^u)$.
\end{proof}

\par 
In the following, we write $r=\br(I)$ and $g=g(C)$.

\begin{lem}\label{l:u} 
Assume that $\overline{G}(I)$ is Gorenstein with 
$r \ge 2$. Then  $\chi(Z_0) \le \chi(Z)\le 0$ and $u\le d-2$.
\end{lem}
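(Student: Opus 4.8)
The plan is to read off the sign of $\chi(Z)$ from the numerical criterion supplied by \thmref{Main}, and then to transfer the resulting inequality down to the minimal resolution $X_0$ by means of the decomposition $Z=\phi^*Z_0+F$ of \proref{p:repMin}. Concretely, I would first establish $\chi(Z)\le 0$, then deduce $\chi(Z_0)\le\chi(Z)$, and finally translate $\chi(Z_0)\le 0$ into the bound $u\le d-2$ via \lemref{deg d hyp}(2).

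For the first step, observe that $A$ is a hypersurface, hence Gorenstein, and that $\overline{G}(I)$ being Gorenstein makes it in particular Cohen--Macaulay; together with $r=\br(I)\ge 2$ this is exactly the setting of \thmref{Main}. The equivalence of (1) and (4) there yields $(r-1)Z^2+K_XZ=0$, equivalently $(r-2)Z^2=2\chi(Z)$. Since $Z^2=-e_0(I)<0$ (as $I$ is $\m$-primary) and $r-2\ge 0$, the left-hand side is $\le 0$, so $\chi(Z)\le 0$, with equality precisely when $r=2$.

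For the second step I would invoke \proref{p:repMin}(1): under the minimal representation of $I$ on $X$,
\[
\chi(Z)-\chi(Z_0)=\chi(F)=\sum_{i=1}^m \frac{a_i(a_i+1)}{2}\ge 0,
\]
so $\chi(Z_0)\le\chi(Z)\le 0$, which is the first assertion. Finally, \lemref{deg d hyp}(2) records $\chi(Z_0)=du(u-d+2)/2$ together with the equivalence $\chi(Z_0)\le 0\iff u\le d-2$, giving the second assertion.

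I do not expect a genuine obstacle: the statement is an assembly of \thmref{Main}, \proref{p:repMin}(1), and \lemref{deg d hyp}(2). The only points demanding care are checking that the hypotheses of \thmref{Main} hold (Cohen--Macaulayness follows from Gorensteinness, and $A$ is Gorenstein as a hypersurface) and tracking the sign of $Z^2$, so that the inequality $(r-2)Z^2\le 0$ is oriented correctly when passing to $\chi(Z)$.
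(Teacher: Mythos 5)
Your proposal is correct and follows essentially the same route as the paper's own (very terse) proof: apply \thmref{Main} to get $\chi(Z)=(r-2)Z^2/2\le 0$, then \proref{p:repMin}(1) for $\chi(Z_0)\le\chi(Z)$, and \lemref{deg d hyp}(2) to translate $\chi(Z_0)\le 0$ into $u\le d-2$. Your write-up merely makes explicit the points the paper leaves implicit (Gorenstein $\Rightarrow$ Cohen--Macaulay so \thmref{Main} applies, and $Z^2<0$), which are exactly the right details to check.
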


\begin{proof} Since $\chi(Z)=(r-2)Z^2/2 \le 0$ if $\ol G (I)$ is Gorenstein by \thmref{Main}, it follows from  \proref{p:repMin} (1) and Lemma \ref{deg d hyp} (2).
\end{proof}

The invariant $q(\infty I)$ plays an important role in our proof of 
Theorem \ref{Main: d-4,5}.

\begin{lem}\label{l:q(inf)}   $q(\infty I ) = 0$ if $Z E_0 < 0$ and $q(\infty I )= q((r-1) I) \ge g$ 
if  $Z E_0=0$. 
\end{lem}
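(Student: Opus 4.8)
The plan is to compute $q(\infty I)$ as the total geometric genus of the singularities of the normalized blow-up $X_I$, and then to read off the two cases according to whether the genus-carrying curve $E_0$ is contracted. Before that, I record the identity $q(\infty I)=q((r-1)I)$, which holds in general: by \proref{q(nI)formula}(1) the successive differences $q_{n-1}-q_n=\ell_A(\overline{I^n}/Q\overline{I^{n-1}})$ are nonnegative and non-increasing, while by \proref{q(nI)formula}(3) the number $r=\br(I)$ is the first index $n$ with $q_{n-1}-q_n=0$; hence $q_{r-1}=q_r=q_{r+1}=\cdots=q(\infty I)$.

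Next I would identify $q(\infty I)$ geometrically. Let $\psi\colon X\to X_I$ be the morphism to the normalization of the blow-up of $I$; since $Z$ is anti-nef, $\psi$ contracts exactly the curves $E_i$ with $ZE_i=0$, and the image $\bar Z=\psi_*Z$ satisfies that $-\bar Z$ is $\psi$-ample (relatively over $\Spec A$). For every $k$ the sheaf $\mathcal O_X(-kZ)=I^k\mathcal O_X$ is globally generated and $\psi$-numerically trivial, so the projection formula gives $R^1\psi_*\mathcal O_X(-kZ)\cong \mathcal O_{X_I}(-k\bar Z)\otimes R^1\psi_*\mathcal O_X$, a sheaf of finite length equal to $\operatorname{length}(R^1\psi_*\mathcal O_X)=:p_g(X_I)$, the sum of the geometric genera of the singular points of $X_I$. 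Because the fibers of $X\to\Spec A$ are at most $1$-dimensional we have $H^2(X,\mathcal F)=0$ for all coherent $\mathcal F$, and the Leray spectral sequence together with Serre vanishing $H^1(X_I,\mathcal O_{X_I}(-k\bar Z))=0$ for $k\gg 0$ yields $q(kI)=h^1(X,\mathcal O_X(-kZ))=p_g(X_I)$ for all large $k$. Thus $q(\infty I)=p_g(X_I)$.

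Now the dichotomy is governed by $E_0$: as $Z$ is anti-nef, $E_0$ is $\psi$-exceptional precisely when $ZE_0=0$. If $ZE_0<0$, then $E_0$ survives on $X_I$ and every $\psi$-contracted curve is one of the rational curves $F_i$ produced by the blow-ups $\phi$, so the $\psi$-exceptional set is contained in the $\phi$-exceptional set and $X_I$ carries a birational morphism to the minimal resolution $X_0$ whose singular points map to smooth points of $X_0$. Each singularity of $X_I$ is therefore sandwiched, hence rational, so $p_g(X_I)=0$ and $q(\infty I)=0$. If instead $ZE_0=0$, then $E_0$ lies in the exceptional fiber over some singular point $\bar p_0\in X_I$; taking cohomology of $0\to\mathcal O_X(-E_0)\to\mathcal O_X\to\mathcal O_{E_0}\to 0$ on the germ at $\bar p_0$ and using $H^2=0$ produces a surjection onto $H^1(\mathcal O_{E_0})$, whence $p_g(X_I,\bar p_0)\ge h^1(\mathcal O_{E_0})=g$. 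Combined with $q(\infty I)=p_g(X_I)\ge p_g(X_I,\bar p_0)$ and the opening identity, $q(\infty I)=q((r-1)I)\ge g$.

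The cohomological bookkeeping (projection formula, Serre vanishing, Leray, the vanishing of $H^2$) is routine. The step I expect to require the most care is the rationality claim in the case $ZE_0<0$: verifying that contracting a connected configuration of rational curves arising from blow-ups of smooth points of $X_0$ produces only rational singularities. I would justify this through the sandwiched-singularity characterization, using \proref{p:repMin} and \remref{r:ai} to ensure that the contracted curves are genuinely exceptional for a birational morphism onto a smooth germ.
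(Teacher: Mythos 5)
Your argument is correct, but it is genuinely different from what the paper does: the paper's entire proof of Lemma~\ref{l:q(inf)} is a citation, deducing the identity $q(\infty I)=q((r-1)I)$ from Proposition~\ref{q(nI)formula} (exactly as in your opening paragraph, via the non-increasing, nonnegative differences $q_{n-1}-q_n$) and outsourcing both halves of the $ZE_0$-dichotomy to \cite[3.5, 3.6]{OWY5}. You instead reconstruct that geometric content from scratch: you identify $q(\infty I)$ with the sum of geometric genera of the singular points of the normalized blow-up $X_I$ (projection formula, Leray, relative Serre vanishing, and $H^2=0$ for $1$-dimensional fibers), then observe that when $ZE_0<0$ the $\psi$-contracted curves lie in the $\phi$-exceptional locus, so the singularities of $X_I$ are sandwiched and hence rational, while when $ZE_0=0$ the surjection $H^1(\mathcal{O})\twoheadrightarrow H^1(\mathcal{O}_{E_0})$ on the germ over $\bar p_0$ gives $p_g(X_I,\bar p_0)\ge g$. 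What each approach buys: the paper's citation is economical and keeps the section short; your proof is self-contained, makes the mechanism transparent (whether $E_0$ survives on $X_I$ is exactly what decides whether any geometric genus can survive in $q(\infty I)$), and works for any $I=I_Z$ over a cone-like singularity without appealing to the cited results. Two small points to tighten: the $F_i$ of Proposition~\ref{p:repMin} are total transforms (cycles), so the $\psi$-contracted curves should be called components of the $\phi$-exceptional divisor rather than ``the curves $F_i$''; and the birational morphism $X_I\to X_0$ you invoke requires a rigidity/factorization argument---which you can bypass altogether by applying Artin's rationality criterion (every positive cycle supported on the exceptional divisor of a composition of point blow-ups of a smooth germ has $\chi\ge 1$) directly to each connected component of the contracted configuration.
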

\begin{proof}
It follows from \proref{q(nI)formula} and \cite[3.5, 3.6]{OWY5}.
\end{proof}

\par 
The number $Z E_0$ plays an important role in our proof of  \thmref{Main: d-4,5}.
We recall the following fact.  

\begin{lem}\label{ZE_0}
 Let $Z = \phi^*(u C) + F = \phi^*( u C) + \sum_{i=1}^m a_i F_i$ as 
in Proposition \ref{p:repMin} and we define $P$ be a subset of $\{1, \ldots , m\}$ 
defined by the condition that $i\in  P$ if the blowing-up $\phi_i\: X_i \to X_{i-1}$
 occurs on the proper transform of $C$ on $X_{i-1}$. Then we have  
\[Z E_0 = - ud + \sum_{i\in P} a_i.\]
\end{lem}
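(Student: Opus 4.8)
The plan is to expand $ZE_0$ by linearity using the decomposition $Z=\phi^*(uC)+F$ with $F=\sum_{i=1}^m a_iF_i$, thereby reducing the whole computation to the two kinds of pieces $\phi^*(C)\cdot E_0$ and the individual numbers $F_i\cdot E_0$. Thus I would first write
\[
ZE_0 = u\,(\phi^*C\cdot E_0) + \sum_{i=1}^m a_i\,(F_i\cdot E_0),
\]
and then evaluate the two types of terms separately.

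For the first term the projection formula gives $\phi^*C\cdot E_0 = C\cdot \phi_*E_0$. Since $E_0$ is the proper transform of $C$, its pushforward is $C$ itself, so $\phi^*C\cdot E_0 = C^2$. I would identify $C^2=-d$ by comparing the two expressions for $\chi(uC)$ available from \lemref{deg d hyp}: substituting $K_{X_0}=(2-d)C$ into $\chi(uC)=-\tfrac12\big((uC)^2+K_{X_0}\cdot uC\big)$ yields $\chi(uC)=-\tfrac12\,C^2\,u(u-d+2)$, which matches the stated value $du(u-d+2)/2$ exactly when $C^2=-d$. (Equivalently, this is the standard self-intersection of the exceptional curve of the blow-up of the cone over a smooth plane curve of degree $d$.) Hence this part contributes $-ud$.

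The heart of the argument is the computation of each $F_i\cdot E_0$, and here I would exploit that $F_i$ is a \emph{total} transform. Writing $\psi_i=\phi_{i+1}\circ\cdots\circ\phi_m\colon X\to X_i$ for the map down to the stage where $\phi_i$ is performed and $\cE_i\subset X_i$ for the exceptional curve of $\phi_i$, we have $F_i=\psi_i^*\cE_i$, so the projection formula gives $F_i\cdot E_0=\cE_i\cdot(\psi_i)_*E_0=\cE_i\cdot C_i$, where $C_i$ is the proper transform of $C$ on $X_i$. The $(-1)$-curve $\cE_i$ meets $C_i$ in exactly $\mult_{p_i}(C_{i-1})$ points, so it remains to control these multiplicities. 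This is the main obstacle, and the only genuinely geometric input: I must show that every proper transform $C_i$ is smooth. Since $A$ is normal (so the projective curve $\{f=0\}\subset\PP^2$ is smooth), $C_0=C$ is smooth; and the proper transform of a smooth curve under a point blow-up is again smooth, so by induction each $C_i$ is smooth. Consequently $\mult_{p_i}(C_{i-1})=1$ when $p_i\in C_{i-1}$ and $0$ otherwise, i.e. $F_i\cdot E_0=1$ exactly when $i\in P$ and $F_i\cdot E_0=0$ otherwise.

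Combining the two computations gives $ZE_0=-ud+\sum_{i\in P}a_i$, as claimed. I expect the careful bookkeeping of total transforms versus proper transforms in the projection-formula step to be where most attention is required, while the smoothness of the successive $C_i$ is the key structural fact that forces every relevant multiplicity to equal $1$ and hence makes the correction term the plain count $\sum_{i\in P}a_i$.
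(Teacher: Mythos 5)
Your proof is correct and follows essentially the same route as the paper's: expand $ZE_0$ linearly, evaluate $\phi^*(uC)\cdot E_0 = uC^2 = -ud$ by the projection formula, and observe that $F_i\cdot E_0$ equals $1$ for $i\in P$ and $0$ otherwise. The paper simply asserts the values of $F_i\cdot E_0$ "by definition," whereas you supply the justification (projection formula plus smoothness of the successive proper transforms of $C$, forcing all multiplicities to be $1$), which is exactly the content implicit in the paper's two-line argument.
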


\begin{proof}
By the definition of $F_i$ and $P$, we have $F_iE_0= 0$ if $i \not\in P$ and $F_iE_0= 1$ if $i \in P$.
Hence 
\[
ZE_0 = \phi^*(u C)\phi^*C + FE_0 = - ud + \sum_{i\in P} a_i.
\qedhere
\]
\end{proof}

\par 
We know that $\overline{G}(\m)= G(\m) = k[X,Y,Z]/ (f)$ is Gorenstein and also 
$\overline{G}(\m^n)$ is Cohen-Macaulay for every $n\ge 1$.  

\begin{ex} [cf. \corref{VerNTC}, \cite{GI}] \label{e:m^n}
$\overline{G}(\m^n)$ is Gorenstein if and only if \par\noindent
 $(d-2)/n \in \Z$.
In this case, $\br(\m^n) = 1+(d-2)/n$. 
\end{ex}

\par 
Now we will prove our Main Theorem \ref{Main: d-4,5}.

\begin{proof}[Proof of Theorem \ref{Main: d-4,5}] 
As in Proposition \ref{p:repMin}, we write 
\[Z = \phi^*(u C) + F = \phi^*( u C) + \sum_{i=1}^m a_i F_i.\]
Then, we have 
\begin{equation}
\label{eq:chi}
\chi(Z) = \chi(uC) + \chi( F ) = du (u - d + 2 ) / 2 + \sum_{i=1}^m a_i(a_i+1)/2.\end{equation}
By Corollary \ref{VerNTC} it suffices to classify the cases when $\br(I) =2$
 ($\chi(Z) = 0$) and then ask if there is some $W$ with $Z = sW$ for some $s \in \bbZ$. 
\par 
Since we must have $\chi(Z) \le 0$, we have $u \le d-2$ and 
if $u = d-2$, then $Z = \phi^*((n-2)C)$ (namely $I = \m^{d-2}$) by \ref{p:repMin}(1). 
Hence we have $I = \m$ if $d=3$ and we may assume $u \le d-3$ for $d = 4,5$. 
\par
Then, since $\br(I)=2$, we have $q( \infty I) = q(I) \ge q(u \m)  >0$ by \lemref{deg d hyp}  (5), 
we must have 
$Z E_0=0$ by  Lemma \ref{l:q(inf)}. 
\par 
Now we will go on for remaining $d, u$.

\begin{enumerate}
\item If $d=4, u=1$, then we have $\chi(C) = -2$. To have $\chi(Z)=0$, 
we must have $m=2, a_1=a_2=1$. But in this case, we have $Z E_0 \le 
- 4 + 2 < 0$  by \lemref{ZE_0}. Hence there is no $I=I_Z$ with $\chi(Z) =0$ and $\br(I)=2$.    
\par 
This shows if $d=4$, $\m$ and $\m^2$ are only examples that $\overline{G}(I)$ is Gorenstein.  

\item If $d =5, u = 2$, then we have $\chi(2C) = -5$. Since $(uC)C = -10$ in this case, 
we easily see that $Z E_0 < 0$   as above. Hence there is  no $I=I_Z$ with $\chi(Z) =0$ and $\br(I)=2$.   
\item If $d=5, u=1$, then we have $\chi(C) = -5$. We must have $\sum_{i=1}^m a_i(a_i-1)/2 = 5$
and since $Z  E_0=0$, we must have $\sum_{i=1}^m a_i\ge 5$  by \lemref{ZE_0}. This forces 
$m=5$ and $a_1=\ldots =a_5=1$. We show $I$ must be $I(L)$ defined in Definition \ref{I(L)def}. 
We can also show that actually, $\br(I)=2$.  
\end{enumerate}
This finishes our proof of Theorem \ref{Main: d-4,5}.
\end{proof}

We will see explicitly the ideal $I(L)$ appeared in Theorem \ref{Main: d-4,5}.

\begin{defn}\label{I(L)def}
Let $L\in A$ be a linear form, i.e., the image of a homogeneous polynomial of degree one in $k[X,Y,Z]$. 
We define an ideal $I(L)$ to be $(L)+\m^2$. 
It follows from Example 3.3 of \cite{Wt3} that $I(L)$ is integrally closed.
\end{defn}

\begin{lem}
\label{l:Lm}
Assume $Z_0=C$  (i.e., $u=1$), $ZE_0=0$ and that $m=d$ and $F=\sum_{i=1}^dF_i$ (i.e., $a_i=1$ for all $i$) in \proref{p:repMin}.

Then we have the following.

\begin{enumerate}
\item There exists a linear form $L\in I$  such that $I = I(L)$. 
\item $\chi(Z )=\frac{1}{2} d (5-d)$ and 
 $q(I)=-2 + d+ S(d-1)$.
In particular, if $d=5$, then $I$ is elliptic and $\ol G(I)$ is Gorenstein. 
\end{enumerate}
\end{lem}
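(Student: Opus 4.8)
The plan is to prove (1) first, extract the length $\ell_A(A/I)=3$ from it, and then read off the numerology in (2) from Kato's Riemann–Roch formula. Throughout write $Z=\phi^*C+F$ with $F=\sum_{i=1}^d F_i$. The first step is to unwind the hypothesis $ZE_0=0$: by \lemref{ZE_0}, with $u=1$ and all $a_i=1$, we have $ZE_0=-d+|P|$, so $|P|=d=m$; that is, every center of $\phi$ lies on the proper transform of $C$, whence $\phi^*C\ge F$ and $Z=\phi^*C+F\le \phi^*(2C)$. Taking sections gives $\m^2=H^0(\mathcal{O}_X(-\phi^*(2C)))\subseteq H^0(\mathcal{O}_X(-Z))=I$. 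On the other hand $u=1$ forces $I\subseteq\m$ but $I\not\subseteq\m^2$ (the coefficient of $C$ in $Z_0=\phi_*Z$ is exactly $1$); choosing $h\in I\setminus\m^2$ with linear leading form $L$, we get $L=h-(h-L)\in I$ since $h-L\in\m^2\subseteq I$. Thus $I(L)=(L)+\m^2\subseteq I$.

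To upgrade this inclusion to an equality I would compute $\ell_A(A/I)$ cohomologically. Since $\phi^*C\cdot F_i=0$, restriction gives $\mathcal{O}_X(-\phi^*C)|_F\cong\mathcal{O}_F$, so there is an exact sequence $0\to\mathcal{O}_X(-Z)\to\mathcal{O}_X(-\phi^*C)\to\mathcal{O}_F\to 0$. Using $H^i(\mathcal{O}_X(-\phi^*C))=H^i(X_0,\mathcal{O}_{X_0}(-C))$ (as $\phi$ is a composition of blow-ups of smooth points), together with $H^1(\mathcal{O}_F)=0$ and $H^0(\mathcal{O}_X(-\phi^*C))=\m$, the long exact sequence identifies $I$ with the kernel of a map $\rho\colon\m\to H^0(\mathcal{O}_F)=k^d$. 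This $\rho$ annihilates $\m^2$, and the induced map $\bar\rho\colon\m/\m^2\to k^d$ is evaluation of linear forms at the $d$ points $q_i=\phi(F_i)$ on $C\cong\{f=0\}\subset\PP^2$. The crucial geometric input is that these are exactly the zeros of $L|_C\in H^0(C,\mathcal{O}_C(1))$, i.e.\ the scheme $\{L=0\}\cap C$; in particular they are collinear, so a linear form vanishing on them lies in $(L)$. Hence $\ker\bar\rho=\langle L\rangle$, $\dim_k\Img\bar\rho=2$, and $I=\ker\rho=(L)+\m^2=I(L)$, with $\ell_A(A/I)=1+\dim_k\Img\bar\rho=3$. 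I expect the identification of the $q_i$ as the collinear zeros of $L|_C$, and the bookkeeping when these points are not distinct, to be the main technical point of (1).

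With (1) in hand, the formulas in (2) are mostly formal. From \eqref{eq:chi}, $\chi(Z)=\chi(uC)+\chi(F)=\tfrac{d(3-d)}{2}+d=\tfrac12 d(5-d)$. Kato's Riemann–Roch formula \thmref{RRformula}, together with $\ell_A(A/I)=3$ and $p_g(A)=S(d)$, then gives $q(I)=\chi(Z)+S(d)-3$, which equals $-2+d+S(d-1)$ after using $S(d)-S(d-1)=(d-1)(d-2)/2$ (equivalently one may invoke \lemref{deg d hyp}(5)).

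Finally, for $d=5$ we get $\chi(Z)=0$ and $q(I)=7<10=p_g(A)$, so $I$ is not a $p_g$-ideal and $\br(I)\ge 2$. To conclude that $I$ is elliptic it remains to show $\br(I)=2$, i.e.\ $q(I)=q(2I)$ by \proref{q(nI)formula}(3). Here $Z^2=C^2+F^2=-d-d=-10$ and $K_XZ=-Z^2=10$, so $\chi(2Z)=10$; running the analogous cohomology computation for the cycle $2Z$ — now with the thickened scheme $2F$ over the collinear points $q_i$ — I would show $q(2I)=7$, equivalently $\ell_A(\overline{I^2}/QI)=\ell_A(A/I)=3$. Since $q(\infty I)\ge g=6$ and the $q_n$ strictly decrease before $\br(I)$, the only alternative is $q(2I)=6$, which would force $\br(I)=3$; thus this single computation is exactly the crux of the $d=5$ statement and is where I expect the real work to lie. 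Once $q(2I)=7$ is established we have $\br(I)=2$, so $I$ is elliptic, and since $\chi(Z)=0$, \thmref{EllipGor} immediately yields that $\overline{G}(I)$ is Gorenstein.
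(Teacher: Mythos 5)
Your part (1) is correct in substance and takes a genuinely different route from the paper. You compute $\ell_A(A/I)=3$ cohomologically, from $0\to\cO_X(-Z)\to\cO_X(-\phi^*C)\to\cO_F\to 0$ and the identification $\ker\bar\rho=\langle L\rangle$, whereas the paper squeezes the length between two inclusions: $I(L)\subseteq I$ gives $\ell_A(A/I)\le\ell_A(A/I(L))=3$, while the strict inclusion of $I$ in an ideal $I_{Z_1}$, $Z_1=\psi^*C+E$ a one-blow-up cycle with $\ell_A(A/I_{Z_1})=2$, gives $\ell_A(A/I)\ge 3$. Your route works once you justify $H^1(\cO_F)=0$ (true, since $F$ is supported on the exceptional set of $\phi$ over smooth points of $X_0$) and replace the ``collinear'' argument by the correct one: if $L'\in\ker\bar\rho$ then $\di_C(L'|_C)$ dominates the degree-$d$ divisor cut out by the cluster, hence equals $\di_C(L|_C)$ by degree, hence $L'/L$ is constant on $C$ and $L'\in\langle L\rangle$; collinearity alone does not handle infinitely near centers, which you flag but do not resolve. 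The general-$d$ formulas in (2) then follow from Riemann--Roch exactly as you say, and agree with \lemref{deg d hyp}.

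The genuine gap is the $d=5$ Gorenstein assertion. Everything there hinges on $q(2I)=q(I)=7$, equivalently $\br(I)=2$, and you explicitly defer this: you say you ``would show $q(2I)=7$'' by an analogous cohomological computation on the thickened scheme $2F$, and you yourself identify that computation as where the real work lies. This is precisely the crux: if instead $q(2I)=6$, then $\br(I)=3$ and $(r-1)Z^2+K_XZ=2(-10)+10=-10\ne 0$, so $\ol G(I)$ would fail to be Gorenstein by \thmref{Main}; nothing in your write-up excludes this possibility. The paper closes this gap with a short squeeze that needs no cohomology on $2F$ and which you could adopt verbatim: one computes ideal-theoretically $\ell_A(A/I(L)^2)=13$ (a monomial count in $(L^2)+L\m^2+\m^4$, unaffected by $f$ since $\deg f=5$), while Kato's Riemann--Roch together with the monotonicity $q(2I)\le q(I)$ gives
\[
\ell_A\bigl(A/\overline{I(L)^2}\bigr)=\chi(2Z)+p_g(A)-q(2I)\ \ge\ 10+10-7=13;
\]
since $I(L)^2\subseteq\overline{I(L)^2}$, equality holds throughout, so $I(L)^2$ is integrally closed and $q(2I)=q(I)$, whence $\br(I)=2$ and Gorensteinness follows from $\chi(Z)=0$ as you indicate. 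Until you either carry out your jet computation on $2F$ or substitute this argument, the ``$\ol G(I)$ is Gorenstein'' claim for $d=5$ remains unproved.
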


\begin{proof}
(1)
Let $h\in I$ be a general element so that 
\[\di_X (h) = Z + H,\]
where $H$ does not contain exceptional components on $X$.
Note that $Z =  \phi^*(2C) - E_0$. Hence $h \not\in \m^2$ and $\m^2\subsetneq I=I_Z$. 
Take a linear form $L$ so that $L - h \in \m^2$.
Then 
$I(L) = \m^2 + (L) \subset I_Z$. 
Now, since $\ell_A(A/\m^2)=4$ and $\m^2 \subsetneq I_Z$, 
we have $\ell_A(A/I_Z) \le \ell_A(A/I(L)) = 3$. 
Recall that if $\psi : X_1\to X_0$ is a blowing-up of any point $p\in C$ and we put 
$Z_1 = \psi^*(C) + E$, where $E$ is the exceptional curve of $\psi$, then 
$Z_1$ is generated and $\ell_A(A/I_{Z_1})=2$. Since $I_Z$ is strictly contained in an 
ideal of the form $I_{Z_1}$, we must have $\ell_A(A/I_Z) = 3$ and $I_Z = \m^2 + (L)= I(L)$.   
\par
(2) We have $\chi(Z) =  d(5 -d)/2$ by \eqref{eq:chi} and  $q(I)=-2 + d+ S(d-1)$ by Lemma \ref{deg d hyp} (4), (5).
Now, assume that $d=5$. Then $\chi(Z) =0$ and $q( I(L)) =7$. 
We prove that $\br(I(L))=2$.
To show that, it suffices to show that 
$q( 2 I(L)) = q(I(L))$. \par
Since $p_g(A)=10$, $Z^2= -10$, and $\chi(2 Z) = 2 \chi(Z) - Z^2 = 10$, 
 we have 
\[\ell_A( A/ \overline{I(L)^2}) =  \chi(2Z) + [p_g(A) -q(2I)] \ge 10 + [p_g(A) - q(I(L))]=13.\]  
On the other hand, we easily compute that $\ell_A( A/ (I(L)^2) = 13$. 
Since $\ell_A(A/ \overline{(I(L)^2}) \le \ell_A( A/ (I(L)^2)$, we get 
$I(L)^2$ is integrally closed and that $q(2 I(L)) = q(I(L))$ or, $\br(I(L))=2$.   Hence $\ol G(I(L))$ is Cohen-Macaulay  by \cite{M} and \cite{Hun}.
Since $\chi(Z)=0$,  $\ol G(I(L))$ is Gorenstein by \thmref{Main}.
\end{proof}

\begin{rem} 
For any linear form $L$, the ideal $I(L)$ is represented by a cycle $Z$ satisfying the assumption of \lemref{l:Lm}, and therefore, $I(L)$ is elliptic and $\ol G(I(L))$ is Gorenstein if $d=5$.
To see this, we take $h:=L+h'\in \m$ with general element $h'\in \m^2$ so that $V_0:=\di_{X_0}(h)-C$, the proper transform of $\di_{\spec A}(h)$, is nonsingular.
Clearly, $V_0 C=d$ and $V_0 \cap C$ depends only on $L$. Taking $d$ blow-ups at the intersection of the proper transforms of $V_0$ and $C$, we have the cycle $Z=\phi^*C+\sum _{i=1}^d F_i$ such that $Z E_0=0$ and $\cO_X(-Z)$ is generated by $h$ and a general element of $\m^2$ (see \remref{r:ai} for the description of $F$).
\par 
When $C\cap V_0$ has distinct $d$ points, then $I(L)$ is a special case of $I_{Z_r}$ in \cite[4.5]{OWY5}.
\end{rem}

\begin{ex}
Let us show some concrete examples of $I(L)$ with $d=5$.
Let $\Gamma$ denote the weighted dual graph of $E$.

(1) Let $f=X^5+Y^5+Z^5$, $L=Y+Z$, $h=L+Y^2$ and $p=(0:1:-1)\in \PP^2$.
We have $\{p\}=V(f)\cap V(L)\subset \PP^2$.
Then $\cO_C(V(L))\cong \cO_C(5p)$ and  $\Gamma$ is as follows.

\begin{center}
\begin{picture}(20,30)(100,15)
\multiput(15,25)(40,0){6}{\circle*{4}}
\put(15,25){\line(1,0){200}}
\put(15,15){\makebox(0,0){$[6]$}}
\put(15,35){\makebox(0,0){$-10$}}
\put(55,35){\makebox(0,0){$-1$}}
\multiput(95,35)(40,0){4}{\makebox(0,0){$-2$}}
\put(55,15){\makebox(0,0){$E_5$}}
\put(95,15){\makebox(0,0){$E_4$}}
\put(135,15){\makebox(0,0){$E_3$}}
\put(175,15){\makebox(0,0){$E_2$}}
\put(215,15){\makebox(0,0){$E_1$}}
\end{picture}
\end{center}
We have 
\begin{gather*}
F_j=\sum_{i=j}^5 E_i, \quad 
Z= \phi^*(2C)-E_0 = E_5^*.
\end{gather*}

(2) Let $f=X^3(X-Y)^2+Y^5+Z^5$, $h=Y+Z+Y^2$ and $p=(0:1:-1), q=(1:1:-1)\in \PP^2$.
Then $\cO_C(V(L))\cong \cO_C(3p+2q)$ and  $\Gamma$ is as follows.

\begin{center}
\begin{picture}(20,30)(100,15)
\multiput(15,25)(40,0){6}{\circle*{4}}
\put(15,25){\line(1,0){200}}
\put(15,15){\makebox(0,0){$E_4$}}
\put(15,35){\makebox(0,0){$-2$}}
\put(55,35){\makebox(0,0){$-1$}}
\put(95,35){\makebox(0,0){$-10$}}
\put(135,35){\makebox(0,0){$-1$}}
\multiput(175,35)(40,0){2}{\makebox(0,0){$-2$}}
\put(55,15){\makebox(0,0){$E_5$}}
\put(95,15){\makebox(0,0){$[6]$}}
\put(135,15){\makebox(0,0){$E_3$}}
\put(175,15){\makebox(0,0){$E_2$}}
\put(215,15){\makebox(0,0){$E_1$}}
\end{picture}
\end{center}
We have 
\begin{gather*}
F_j=\sum_{i=j}^3 E_i \ \ (1 \le j \le 3), \quad 
F_4=E_4+E_5, \quad F_5=E_5, \\
Z= \phi^*(2C)-E_0 = E_3^* + E_5^*.
\end{gather*}
\end{ex}

\begin{acknowledgement}
The first-named author was partially supported by JSPS Grant-in-Aid 
for Scientific Research (C) Grant Number 21K03215. 
The second-named author was partially supported by JSPS Grant-in-Aid 
for Scientific Research (C) Grant Number 23K03040. 
The third named author was partially supported by JSPS Grant-in-Aid 
for Scientific Research (C) Grant Number 19K03430.
\par 
Furthermore, 
the authors thank Professor Jugal Verma for informing us that 
the equivalence of (1),(2) and (3) of Theorem \ref{Main} has already been proved by Heinzer et. al 
(\cite[Theorem 4.3]{HKU}). 
\end{acknowledgement}


\end{document}